\renewcommand{\epsilon}{\varepsilon}
\theoremstyle{plain} 
\newtheorem{thm}{Theorem}[section]
\newtheorem{lemma}[thm]{Lemma}
\newtheorem{prop}[thm]{Proposition}
\theoremstyle{plain} 
\newtheorem{defin}[thm]{Definition}
\newcommand{\Rs}{\mathbb{R}}
\newcommand{\Ns}{\mathbb{N}}
\newcommand{\Pb}{\mathbb{P}}
\newcommand{\E}{\mathbb{E}}
\newcommand{\mfinite}{{\cal M}^f_+}
\newcommand{\mprob}{{\cal M}_1}
\newcommand{\dkr}{{d_{\rm KR}}}
\newcommand{\eyenot}{{\cal I}_0}
\newcommand{\eyeone}{{\cal I}_1}
\newcommand{\eyep}{{I_{Poi}}}
\newcommand{\eyeq}{{J}}
\newcommand{\eyelamb}{{I}}
\newcommand{\eyecox}{{I}}
\newcommand{\eyecoxpp}{{\mathcal{I}}}
\newcommand{\eyex}{{H_x}}
\newcommand{\eyexn}{{H_{x_n}}}
\newcommand{\vareyeone}{\mathfrak{I}_1}
\newcommand{\vareyetwo}{\mathfrak{I}_2}
\newcommand{\eqdistrib}{\stackrel{\scriptstyle{\rm d}}{=}}
\begin{document}

\title{Functional Large Deviations for Cox Processes and $Cox/G/\infty$ Queues, with a Biological Application}
\author{Justin Dean, Ayalvadi Ganesh, Edward Crane} 

\maketitle

\section*{Abstract} 

We consider an infinite-server queue into which customers arrive according to a Cox process and have independent service times 
with a general distribution. We prove a functional large deviations principle for the equilibrium queue length process. The model is 
motivated by a linear feed-forward gene regulatory network, in which the rate of protein synthesis is modulated by the number 
of RNA molecules present in a cell. The system can be modelled as a non-standard tandem of infinite-server queues, in which 
the number of customers present in a queue modulates the arrival rate into the next queue in the tandem. We establish large 
deviation principles for this queueing system in the asymptotic regime in which the arrival process is sped up, while the service 
process is not scaled.  

\newpage

\section{Introduction} \label{sec:intro}

The work in this paper is motivated by the problem of modelling fluctuations in the number of protein molecules in a cell. 
The synthesis of proteins is catalysed by RNA molecules, which in turn are transcribed from DNA molecules. Both RNA and 
protein molecules degrade spontaneously after some random time. It is important for proper functioning of the cell that 
protein numbers are maintained within certain limits, and biologists are interested in understanding the regulatory mechanisms 
involved in controlling their fluctuations. Consequently, the problem of modelling stochastic fluctuations has attracted 
interest, and there has been considerable work on Markovian models of such systems; see, e.g., ~\cite{lestas08, paulsson05}. 
These models assume that each copy of a gene creates RNA molecules according to a Poisson process (while active), 
that each RNA molecule generates protein molecules according to a Poisson process, and that the lifetimes of RNA and 
protein molecules are exponentially distributed. The assumption of exponential lifetimes is biologically unrealistic; for example, 
inhomogeneities in the cellular environment could result in lifetimes that are mixtures of exponential distributions, or the 
denaturing of molecules could be a multistage process. 

Our approach relies on modelling the chemical kinetics using $\cdot/G/\infty$ queues rather than Markov processes, which 
correspond to $\cdot/M/\infty$ queues. Customer arrivals into the queue correspond to the synthesis of molecules of a 
specified type; after independent lifetimes with a general distribution, the molecules decay which equates to service (and 
departure) of the corresponding customers. For the problem described above, we have two such queues in series, one 
for RNA molecules and one for proteins. However, unlike in a tandem queueing network, where departures from one queue 
enter the next queue in series, here departures just leave the system; the way influence propagates is that the arrival rate 
into the protein queue is modulated by the occupancy of the preceding queue (here, RNA) in the series. We consider a very 
simple form of modulation, in which the arrival rate into a queue is proportional to the occupancy of the preceding queue, 
and the arrival process is conditionally Poisson given the occupancy. Thus, this results in a Cox process model for the arrivals 
into a queue, and the system is modelled as a series of $Cox/G/\infty$ queues interacting as described. 

We briefly recall the description of the queue length process in an $M/G/\infty$ queue with arrival rate $\lambda$ and  
service distribution $F$. The arrival process into this queue can be represented as an inhomogeneous Poisson process on 
$\Rs \times \Rs_+$ with intensity measure $\lambda \otimes F$. If a realisation of this point process has a point at 
$(t,y)$, it denotes that a customer arrives at time $t$ bringing a service requirement of $y$. The queue length at time $t$ 
is simply the total number of points of the Poisson process in the set 
$$
A_t= \{ (s,y): s\le t, y> t-s \},
$$
as a customer arriving at time $s$ will still be in the system at time $t$ if and only if its service requirement is greater than $t-s$. 
(We follow the convention of defining the queue length process to be right continuous.)  Likewise, the queue length process during 
a time interval $[s,t]$ can be described in terms of the empirical measure of the above Poisson process on the wedge-shaped set 
$$
A_{[s,t]} = \bigcup_{u \in [s,t]} A_u.
$$

In the problem we want to study, the intensity of the arrival process is modulated by the number of customers present 
in the previous queue. Hence, we need to model it as a Cox process and study the corresponding $Cox/G/\infty$ queue. 
As described above, this requires us to study the empirical measure of a Cox process on a subset of $\Rs^2$. We shall 
in fact study them in a more general setting of $\sigma$-compact Polish spaces, namely Polish spaces that can be 
covered by countably many compact subsets. Our goal is to obtain functional large deviation principles (FLDPs) for the 
corresponding queue length processes; we shall obtain these by contraction from LDPs for the empirical measure of the 
Cox process. We have not been able to drop the technical assumption of $\sigma$-compactness from our proof, but do 
not know if it is essential for the stated results.

In terms of the motivating application, biologists have been interested in understanding fluctuations in molecule numbers, 
both because large fluctuations can be deleterious, and because the statistics of fluctuations can shed light on underlying 
regulatory mechanisms. Most work to date has focused on second-order statistics, both the marginal variance, and 
auto-covariance and cross-covariance functions for protein and mRNA molecule counts~\cite{lestas08}. The scaling 
regime studied in this paper might be more relevant for understanding the rare but large fluctuations that are most 
harmful for the cell. Our methods could also provide the foundation for an analysis of regulatory mechanisms, which 
we do not consider in this paper. Finally, a functional LDP can be used to identify the most likely paths leading to rare 
events of interest, and thereby to design efficient simulation schemes via importance sampling for estimating these 
probabilities more accurately.

We present our model and main results in the next section, followed by the proofs in the final two sections.

\section{Model and Results} \label{sec:model}

We now set out our Cox process model. Let $(E,d)$ be a $\sigma$-compact Polish space, and let $\Lambda$ be a 
random finite Borel measure on $E$; in other words, $\Lambda$ is a random variable taking values in $\mfinite(E)$, 
the space of finite non-negative Borel measures on $E$. A Cox process $\Phi$ with stochastic intensity $\Lambda$ 
is a point process which is conditionally Poisson, with intensity measure $\lambda$ on the event that $\Lambda=\lambda$. 
Note that the point process $\Phi$ is almost surely finite. A realisation of $\Phi$ can be thought of as either a point set 
$\{ x_1,x_2,\ldots,x_k \}$, or as a counting measure $\sum_{i=1}^k \delta_{x_i}$, where $k$ is the (random) number 
of points in the realisation. We call the latter the empirical measure corresponding to the realisation of the point set, and 
note that it is also an element of $\mfinite(E)$. There are two topologies on $\mfinite(E)$ which will be of interest to us. 
We say that a sequence of measures $\mu_n \in \mfinite(E)$ converges to $\mu \in \mfinite(E)$ in the weak topology if 
$\int_E fd\mu_n$ converges to $\int_E fd\mu$ for all bounded continuous functions $f:E\to \Rs$; we say the measures 
converge in the vague topology if the integrals converge only for continuous functions with compact support (which are 
necessarily bounded). 

We now consider a sequence of Cox point processes $\Phi_n$, with corresponding stochastic intensities $\Lambda_n$. 
Our first contribution is a large deviation principle (LDP) for their scaled empirical measures:

\begin{thm} \label{thm:cox_ldp}
Suppose that $(\Lambda_n, n\in \Ns)$ is a sequence of random finite Borel measures on a $\sigma$-compact Polish space 
$(E,d)$, and that the sequence $\Lambda_n/n$ satisfies an LDP in $\mfinite(E)$ equipped with the weak topology, with 
good rate function $\vareyeone(\cdot)$. Let $\Phi_n$ be a Cox process with stochastic intensity $\Lambda_n$, i.e., a 
random counting measure on $E$ equipped with its Borel $\sigma$-algebra. Then the sequence of measures $\Phi_n/n$ 
satisfies an LDP in $\mfinite(E)$ equipped with the weak topology, with good rate function
$$
\vareyetwo(\mu) = \begin{cases}
\inf_{\lambda} \left\{\vareyeone(\lambda)+\lambda(E)\right\}, & \mbox{ if $\mu \equiv 0$,} \\
\inf_{\lambda} \left\{\vareyeone(\lambda) + \eyep(\mu(E),\lambda(E)) +\mu(E) H\bigl( \frac{\mu}{\mu(E)} \bigm| 
\frac{\lambda}{\lambda(E)} \bigr)\right\}, & \mbox{ if $\mu \not \equiv 0$,}
\end{cases}
$$
where $H$ and $\eyep$ are defined as follows:
\begin{eqnarray*} 
H(\beta|\alpha) &=&\left\{
	\begin{array}{ll}
		\int\log(d\beta/d\alpha)d\beta  & \mbox{if } \beta \ll \alpha \mbox{ and } \int|\log(d\beta/d\alpha)|d\beta < \infty \\
		+\infty & \mbox{otherwise, }
	\end{array}
\right. \\
\eyep(x,\alpha) &=& \begin{cases}
x \log \frac{x}{\alpha}-x+\alpha, & \mbox{ if } \alpha>0, \\
0, & \mbox{ if } \alpha=0, x=0, \\ 
+\infty, & \mbox{ if } \alpha=0, x>0.
\end{cases}
\end{eqnarray*}
The function $H(\beta|\alpha)$ is called the relative entropy or Kullback-Leibler divergence of $\beta$ with respect to $\alpha$.
\end{thm}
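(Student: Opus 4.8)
The plan is to realise $\Phi_n$ as a Poisson randomisation of its own intensity and then to \emph{mix} the hypothesised LDP for $\Lambda_n/n$ against the LDP for the empirical measure of a Poisson process of prescribed intensity. Write $h(\mu\mid\nu)=\int\bigl(\tfrac{d\mu}{d\nu}\log\tfrac{d\mu}{d\nu}-\tfrac{d\mu}{d\nu}+1\bigr)\,d\nu$ when $\mu\ll\nu$ and $h(\mu\mid\nu)=+\infty$ otherwise; splitting the density $d\mu/d\nu$ as the product of the total-mass ratio $\mu(E)/\nu(E)$ and the density of the normalised measures gives $h(0\mid\nu)=\nu(E)$ and, for $\mu\not\equiv 0$, $h(\mu\mid\nu)=\eyep(\mu(E),\nu(E))+\mu(E)\,H\bigl(\tfrac{\mu}{\mu(E)}\bigm|\tfrac{\nu}{\nu(E)}\bigr)$, so the asserted rate function is exactly $\vareyetwo(\mu)=\inf_\nu\{\vareyeone(\nu)+h(\mu\mid\nu)\}$. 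Accordingly I would prove a \emph{joint} LDP for the pair $(\Phi_n/n,\Lambda_n/n)$ in $\mfinite(E)\times\mfinite(E)$, each factor carrying the weak topology, with good rate function $J(\mu,\nu)=\vareyeone(\nu)+h(\mu\mid\nu)$, and then deduce the theorem from the contraction principle --- which preserves goodness --- applied to the projection $(\mu,\nu)\mapsto\mu$, since $\inf_\nu J(\mu,\nu)=\vareyetwo(\mu)$.

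The engine is an LDP for the empirical measure $\Pi_{n\nu}/n$ of a Poisson process of intensity $n\nu$ on $E$, with good rate function $h(\cdot\mid\nu)$, together with the uniformity of its bounds as $\nu$ ranges over a weakly compact $K\subseteq\mfinite(E)$. I would prove it by writing $\Pi_{n\nu}=\sum_{i=1}^{N}\delta_{Y_i}$ with $N$ Poisson of mean $n\nu(E)$ and the $Y_i$ i.i.d.\ of law $\nu/\nu(E)$, so that $\Pi_{n\nu}/n$ is the image of $(N/n,\tfrac1N\sum_i\delta_{Y_i})$ under the continuous map $(z,w)\mapsto zw$; Cram\'er's theorem for the Poisson law (rate $\eyep(\cdot,\nu(E))$) and Sanov's theorem for the $Y_i$ (rate $H(\cdot\mid\nu/\nu(E))$) combine, by a composition-of-LDPs argument, into a joint LDP for that pair, and contracting identifies the rate of $\Pi_{n\nu}/n$ as $h(\cdot\mid\nu)$. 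The $\sigma$-compactness of $E$ is essential here: on a non-compact $E$ the joint rate on $\Rs_+\times\mprob(E)$ is not good (the stratum $z=0$ sees the non-compact $\mprob(E)$), so one first runs the argument on each compact set $K_m$ of a fixed exhaustion $E=\bigcup_m K_m$, where $\mprob(K_m)$ is compact and the contraction is clean, and then lets $m\to\infty$, using that $\Pi_{n\nu}$ charges $E\setminus K_m$ by a Poisson number of mean $n\nu(E\setminus K_m)$, which is negligible and --- crucially --- tends to zero uniformly over the Prokhorov-tight family $K$ once the exhaustion is chosen cofinally with the tightness compacts of $K$. (This Poisson LDP may alternatively be quoted from the literature on large deviations for point processes.)

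With the uniform Poisson LDP available the joint LDP is assembled classically. Exponential tightness of $(\Phi_n/n,\Lambda_n/n)$ combines that of $\Lambda_n/n$ (from goodness of $\vareyeone$) with the light tails of the conditional Poisson law: given $\Lambda_n/n$ in a weakly compact set, $\Phi_n(E)/n$ is Poisson of bounded mean and $\Phi_n(E\setminus C)/n$ is Poisson of mean at most $\Lambda_n(E\setminus C)/n$, made small by tightness of $\Lambda_n/n$, so $\Phi_n/n$ is exponentially tight conditionally and uniformly. For the upper bound it then suffices to treat compact sets: after restricting both coordinates to weakly compact sets by exponential tightness, one covers the compact target by small product boxes $U_i\times V_i$, conditions on $\Lambda_n$, bounds the inner probability on $\{\Lambda_n/n\in V_i\}$ by $\sup_{\nu\in\overline{V_i}\cap K}\Pb(\Pi_{n\nu}/n\in U_i)$ via the uniform Poisson upper bound, and --- the boxes being small enough that joint lower semicontinuity of $h$ and lower semicontinuity of $\vareyeone$ make the sum of the two exponents no smaller than $\inf_{U_i\times V_i}J$ minus an error --- combines with the upper bound for $\Lambda_n/n$. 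For the lower bound over an open $\mathcal{G}\ni(\mu_0,\nu_0)$ with $J(\mu_0,\nu_0)<\infty$, one restricts $\Lambda_n/n$ to a small weak neighbourhood of $\nu_0$, at logarithmic cost $\vareyeone(\nu_0)$, and applies the uniform Poisson lower bound there; the delicate point is that $\nu\mapsto h(\mu_0\mid\nu)$ need not be weakly continuous at $\nu_0$ (a perturbation of $\nu_0$ can destroy $\mu_0\ll\nu_0$), so I would first reduce, via a standard approximation together with lower semicontinuity of $J$, to pairs $(\mu_0,\nu_0)$ for which $\nu_0$ charges every $K_m$ and $d\mu_0/d\nu_0$ is bounded above and below on every $K_m$, for which the required continuity does hold.

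The main obstacle I anticipate is exactly this uniform Poisson LDP over a Prokhorov-tight family of reference intensities on a merely $\sigma$-compact space, together with the lower-bound reduction just described: truncating $E$ to growing compact pieces while keeping the Cram\'er and Sanov estimates uniform and not corrupting the identification of the rate function is where the real work lies, and where the $\sigma$-compactness hypothesis is spent. Once that uniform conditional LDP for the Poisson randomisation is in hand, the mixing and contraction are routine (the joint-then-contract scheme here is an instance of the general principle for large deviations of mixtures). Finally, it is worth stressing that the mixture route is necessary rather than merely convenient: since $\vareyeone$ need not be convex, neither need $\vareyetwo$, whereas the Laplace-functional identity $\E\exp(\int f\,d\Phi_n)=\E\exp(\int(e^f-1)\,d\Lambda_n)$ only feeds a G\"artner--Ellis argument, which would always produce a convex rate function.
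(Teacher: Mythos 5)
Your proposal is correct in outline and follows essentially the same route as the paper: condition on the intensity, establish a conditional LDP for the Poisson empirical measure by splitting it into a Poisson total count and an i.i.d.\ sample (Cram\'er/Poisson plus Sanov), mix this against the assumed LDP for $\Lambda_n/n$, and obtain goodness separately via exponential tightness, which is where $\sigma$-compactness is spent (through an explicit construction of weakly compact subsets of $\mfinite(E)$ of the form $\{\mu: \mu(K_n^c)\le\epsilon_n\ \forall n\}$ together with Poisson tail bounds). The only substantive difference is in how the two technical difficulties you flag are discharged. The paper does not prove a uniform conditional LDP over a tight family of intensities and does not run a covering argument with product boxes; instead it invokes Chaganty's theorem on LDPs for joint and marginal distributions of mixtures, whose ``LDP continuity condition'' asks exactly for (i) the conditional LDP along every weakly convergent sequence of intensities $\lambda_n\to\lambda$, with rate depending only on the limit $\lambda$, and (ii) joint lower semicontinuity of the conditional rate (checked via the Donsker--Varadhan variational formula). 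Requirement (i) is in turn supplied by the Baxter--Jain extension of Sanov's theorem to triangular arrays of i.i.d.\ samples whose common law varies with $n$ and converges weakly: this is precisely what neutralises your ``delicate point'' that $\nu\mapsto h(\mu_0\mid\nu)$ is not weakly continuous, since the lower bound is obtained for a sequence of intensities with a rate attached only to the limit, rather than by exhibiting a near-optimal $\mu_\nu$ for each fixed nearby $\nu$. The paper also applies the same mixture theorem a second time, inside the conditional LDP, to glue the Poisson count to the Sanov empirical measure (handling the stratum $N_n/n\to 0$ separately, as you anticipate one must), and replaces your map $(z,w)\mapsto zw$ by an exponential-equivalence argument between $\Phi_n$ and the empirical measure with exactly $\lfloor nx\rfloor$ points. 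If you want to complete your version as written, the uniform Poisson LDP and the density-regularisation step in the lower bound are real work; quoting Chaganty and Baxter--Jain is the shortcut the paper takes.
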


A slightly different version of this theorem, with only local finiteness of the measures $\Lambda_n$ assumed, has been established 
by Schreiber~\cite{schreiber03}, albeit in the vague rather than the weak topology; his result also requires a technical assumption 
about the measures $\Lambda_n/n$ dominating a fixed measure with full support on $E$, which we do not need. However, his result 
does not require that the space be $\sigma$-compact. The extension of the result to the weak topology is non-trivial, and relies on the 
finiteness assumption on the intensity measures. In addition, our proof techniques are very different. A functional LDP for rescaled 
Poisson random measures  is proved in \cite{florens98} using projective limits, and in \cite{leonard00} using Cram\'er's theorem and subadditivity arguments. 

The claim of Theorem~\ref{thm:cox_ldp} appears intuitive from the assumed LDP for the intensity measures $\Lambda_n/n$, 
the LDP for a Poisson random variable, and Sanov's theorem for the empirical distribution. However, a number of technical conditions need to be checked. Moreover, while these imply an LDP, goodness of the rate function is not immediate. We show this indirectly by establishing exponential tightness; this is the step where finiteness of the measures is crucial.

Next, we consider a sequence of stationary $Cox/G/\infty$ queues where the arrival processes are sped up 
by the index $n \in \Ns$, while the service process remains unchanged. More precisely, the service times are iid with some 
fixed distribution $F$ that does not depend on $n$, while the arrival process into the $n^{\rm th}$ queue is a Cox process with 
stochastic intensity (directing measure) $\Lambda_n$ on $\Rs$. We make the following assumptions.

\noindent\textbf{Assumptions}
\begin{enumerate}
\item[A1]
$(\Lambda_n, n\in \Ns)$ is a sequence of random $\sigma$-finite measures on $\Rs$, whose laws are translation invariant, 
such that $\E[\Lambda_n([a,b])] = n\lambda(b-a)$, for some fixed $\lambda>0$, and any compact interval $[a,b]\subset \Rs$. 
\item[A2]
For any interval $[a,b]$, the sequence $(\Lambda_n/n)|_{[a,b]}$ obeys an LDP on $\mfinite([a,b])$ equipped with the 
weak topology, with good rate function $\eyelamb_{[a,b]}$. 
\item[A3]
Define 
\begin{align*} \psi_n(\theta) = \log \E\left[ e^{\frac{\theta \Lambda_n([0,1])}{n}}\right].
\end{align*} 
There is a neighbourhood of $0$ on which 
$\psi_n(n\theta)/n$  is bounded, uniformly in $n$.
\item[A4] 
The mean service time, given by $\int_0^{\infty} xdF(x) = \int_0^{\infty} \overline{F}(x)dx$, is finite; here $\overline{F}=1-F$ 
denotes the complementary cumulative distribution function of the service time.
\end{enumerate}

Let $Q_n(t)$ denote the number of customers at time $t$ in the infinite-server queue with Cox process arrivals with intensity 
$\Lambda_n$ and iid service times with distribution $F$. Let $L_n$ denote the measure on $\Rs$ which is absolutely continuous 
with respect to Lebesgue measure, with density $Q_n(\cdot)$. Our second contribution in this paper is the following:

\begin{thm} \label{thm:qldp}
Consider a sequence of $Cox/G/\infty$ queues indexed by $n\in \Ns$, where the arrival process into the $n^{\rm th}$ 
queue is a Cox process with directing measure $\Lambda_n$, and service times are iid with common distribution $F$. 
Suppose the arrival and service processes satisfy Assumptions [A1]-[A4]. Let $Q_n(t)$ denote the number of 
customers in the $n^{\rm th}$ queue at time $t$, and let $L_n$ denote the random measure on $\Rs$ which is absolutely 
continuous with respect to Lebesgue measure and has density $Q_n(\cdot)$. Then the sequence of measures $L_n$ 
satisfies Assumptions [A1]-[A3]. In particular, for any compact interval $[a,b]\subset \Rs$, the measures $(L_n/n)|_{[a,b]}$ 
satisfy an LDP on $\mfinite([a,b])$ equipped with the weak topology, with a good rate function $\eyeq_{[a,b]}$. 
Moreover, the sequence of random variables $Q_n(0)/n$, satisfy an LDP with a good rate function $I_Q$.
\end{thm}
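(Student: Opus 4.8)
The plan is to realise the occupancy measure of the queue, truncated to customers arriving in a finite time window, as a continuous image of the empirical measure of the arrival Cox process, to apply Theorem~\ref{thm:cox_ldp} and the contraction principle on this truncation, and then to remove the truncation by an exponential-approximation argument.

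Fix a compact interval $[a,b]\subset\Rs$. The arrivals into the $n^{\rm th}$ queue, marked by their service requirements, form a Cox process $\Phi_n$ on $\Rs\times\Rs_+$ with directing measure $\Lambda_n\otimes F$, where a point at $(s,y)$ records a customer arriving at time $s$ with service requirement $y$; such a customer is present at time $t$ if and only if $(s,y)\in A_t$, so $Q_n(t)=\Phi_n(A_t)$. For $K>|a|$, consider only the customers arriving in $[-K,b]$: by [A2], $(\Lambda_n/n)|_{[-K,b]}$ satisfies an LDP in the weak topology on $\mfinite([-K,b])$ with a good rate function; the map $\mu\mapsto\mu\otimes F$ is continuous from this space to $\mfinite([-K,b]\times\Rs_+)$ with the weak topology (since $F$ is a fixed probability measure, $s\mapsto\int g(s,y)\,F(dy)$ is bounded and continuous whenever $g$ is), and $[-K,b]\times\Rs_+$ is $\sigma$-compact Polish, so the contraction principle followed by Theorem~\ref{thm:cox_ldp} shows that the empirical measures $\Psi_n^K/n$ of the corresponding Cox process — which is just the restriction of $\Phi_n$ to $[-K,b]\times\Rs_+$ — satisfy an LDP in the weak topology on $\mfinite([-K,b]\times\Rs_+)$ with a good rate function.

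For $\nu\in\mfinite([-K,b]\times\Rs_+)$ let $T\nu$ be the measure on $[a,b]$ with density $t\mapsto\nu(A_t)$. For bounded continuous $g:[a,b]\to\Rs$ one has $\int_{[a,b]} g\,d(T\nu)=\int h_g\,d\nu$ with $h_g(s,y)=\int_{[a,b]}g(t)\,\charfunc\{s\le t\le s+y\}\,dt$, which is bounded by $(b-a)\|g\|_\infty$ and continuous in $(s,y)$ (its integrand's discontinuity set in $t$ is Lebesgue-null), so $T$ is weakly continuous; since $(L_n^K/n)|_{[a,b]}=T(\Psi_n^K/n)$, where $L_n^K$ is the occupancy contribution of customers arriving in $[-K,b]$, the contraction principle yields an LDP in the weak topology for $(L_n^K/n)|_{[a,b]}$ with a good rate function. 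Now let $K\to\infty$. The difference $L_n-L_n^K$ records customers who arrived before $-K$; such a customer present at a time $t\in[a,b]$ has $s<-K$ and $y>t-s>a+K$, so $(L_n-L_n^K)([a,b])\le(b-a)N_K$ with $N_K:=\Phi_n(\{s<-K,\ y>a-s\})$ conditionally Poisson given $\Lambda_n$ with mean $m_n^K\le\sum_{j\ge K}\overline F(a+j)\,\Lambda_n([-j-1,-j])$. Writing $W_K:=\sum_{j\ge K}\overline F(a+j)$, which decreases to $0$ by [A4], convexity of $\exp$ and the translation invariance of [A1] give $\E[e^{\theta m_n^K}]\le\E[e^{\theta W_K\Lambda_n([0,1])}]$; feeding this into a Chernoff bound for $N_K$ and using [A3] together with the convexity of $\eta\mapsto\tfrac1n\log\E[e^{\eta\Lambda_n([0,1])}]$ (which forces this function to tend to $0$ as $\eta\to0$ uniformly in $n$), one obtains $\lim_{K\to\infty}\limsup_n\tfrac1n\log\Pb[(L_n-L_n^K)([a,b])>n\delta]=-\infty$ for every $\delta>0$. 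Hence $(L_n^K/n)|_{[a,b]}$ is an exponentially good approximation of $(L_n/n)|_{[a,b]}$; since the same exponential moment bounds give exponential tightness of $(L_n/n)|_{[a,b]}$ in $\mfinite([a,b])$, it follows that $(L_n/n)|_{[a,b]}$ satisfies an LDP in the weak topology with a good rate function $\eyeq_{[a,b]}$, which is Assumption [A2] for $L_n$. Assumption [A1] for $L_n$ holds because the law of $Q_n(\cdot)$ — hence of $L_n$ — is stationary (by translation invariance of the law of $\Lambda_n$ and the i.i.d.\ service times) and $\E[L_n([a,b])]=\int_a^b\E[\Phi_n(A_t)]\,dt=n\lambda(b-a)\int_0^\infty\overline F(x)\,dx<\infty$; Assumption [A3] for $L_n$ follows from $L_n([0,1])\le\Phi_n(A_{[0,1]})$, which conditionally on $\Lambda_n$ is Poisson with mean $\Lambda_n([0,1])+\int_{-\infty}^0\overline F(-s)\,\Lambda_n(ds)$, and the same convexity argument bounds its exponential moments, uniformly in $n$ at scale $n$, near $\theta=0$.

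Finally, $Q_n(0)=\Phi_n(A_0)$ is conditionally Poisson given $\Lambda_n$ with mean $W_n=\int_{-\infty}^0\overline F(-s)\,\Lambda_n(ds)$. Contracting $(\Lambda_n/n)|_{[-K,0]}$ by the continuous functional $\mu\mapsto\int_{[-K,0]}\overline F(-s)\,\mu(ds)$, showing the tail $\int_{-\infty}^{-K}\overline F(-s)\,\Lambda_n(ds)$ is an exponentially good approximation just as above, and using the exponential moment bound for tightness, one gets an LDP for $W_n/n$ on $\Rs_+$ with a good rate function $I_W$. Theorem~\ref{thm:cox_ldp} applied with $E$ a single point (so $\mfinite(E)\cong\Rs_+$ with the weak topology being the usual one, and the conditionally Poisson variable $Q_n(0)$ playing the role of the Cox process) then yields the LDP for $Q_n(0)/n$ with good rate function $I_Q(x)=\inf_{w\ge0}\{I_W(w)+\eyep(x,w)\}$. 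The main obstacle throughout is the removal of the truncation, i.e.\ showing that the occupancy contribution of customers arriving in the distant past is exponentially negligible at \emph{every} scale: its expected size vanishes as $K\to\infty$ by [A4], but upgrading this to the required control of exponential tails relies on the observation that this mass is a weighted sum of stationary increments of $\Lambda_n$ whose total weight shrinks with $K$, so that the exponential moments available from [A3] actually improve as $K\to\infty$ — which, combined with Poisson concentration for the conditionally Poisson counts, is exactly what is needed.
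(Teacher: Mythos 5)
Your argument for the occupancy-measure LDP is correct, but it takes a genuinely different route from the paper's. The paper restricts the intensity $\frac{\Lambda_n}{n}\otimes F$ to truncated wedges $A^u_{[a,b]}$, which forces it to introduce the tempered topology (restriction to a wedge is not weakly continuous, since mass may sit on the boundary), to pass to the full wedge by the Dawson--G\"artner projective-limit theorem, and to upgrade the resulting LDP to the weak topology by proving exponential tightness of the intensity measures on the non-compact wedge (Propositions~\ref{compact_set_measures} and~\ref{etlem}); only then does it apply Theorem~\ref{thm:cox_ldp} once, followed by the contraction $L$. You instead truncate on the time axis, work on the full product space $[-K,b]\times\Rs_+$ (so that $\mu\mapsto\mu\otimes F$ and the occupancy map $T$ are weakly continuous and no tempered topology is needed), apply Theorem~\ref{thm:cox_ldp} on each truncation, and remove the truncation at the level of the occupancy measures via exponentially good approximations. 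The tail estimate you need --- domination of $\sum_{j\ge K}\overline F(a+j)\,\Lambda_n([-j-1,-j])$ in convex order by $W_K\Lambda_n([0,1])$, combined with [A3] and $W_K\downarrow 0$ under [A4] --- is exactly the content of the paper's Lemmas~\ref{cxorder} and~\ref{lem:complement_bd}, so the hard analytic input is the same. What your route buys is the avoidance of the tempered topology and of Dawson--G\"artner, and a much easier exponential tightness check (on $\mfinite([a,b])$ with $[a,b]$ compact, a uniform bound on total mass suffices); what it gives up is the explicit variational formula \eqref{eq:qocc_ldp} for $\eyeq_{[a,b]}$, which in your version is only identified as a limit of truncated rate functions. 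Your verifications of [A1] and [A3] for $L_n$ coincide with the paper's.

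The claim about $Q_n(0)/n$ is a different matter. You should be aware that the paper's own proof does not establish it at all: the discussion following the theorem statement concedes that $\mu\mapsto\mu(A_0)$ is not weakly continuous and that the authors ``do not pursue'' the sandwiching argument needed to repair this. Your idea --- represent $Q_n(0)$ as conditionally Poisson with mean $W_n=\int_{-\infty}^0\overline F(-s)\,\Lambda_n(ds)$, prove an LDP for $W_n/n$, and apply Theorem~\ref{thm:cox_ldp} with $E$ a singleton --- is a genuine route around the obstruction, but it contains a gap: the functional $\mu\mapsto\int_{[-K,0]}\overline F(-s)\,d\mu(s)$ is weakly continuous only when $\overline F$ is continuous, and $F$ here is a general distribution which may have atoms. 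At an atom of $F$ the integrand jumps, and weak convergence of measures does not control the integral unless the limiting measure gives no mass to the discontinuity set, which the rate function's domain does not guarantee; so the contraction principle does not apply as stated. For continuous $F$ your argument goes through; for general $F$ you would need to sandwich $\overline F$ between continuous functions and show the resulting rate functions converge --- precisely the computation the paper declines to carry out.
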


A fuller description of the rate functions $\eyeq_{[a,b]}$ and $I_Q$ is provided in the proof of this theorem, in 
Section~\ref{sec:qldp}. The theorem shows that the sequence of queue occupancy measures $L_n$ also satisfies the 
above assumptions and, in particular, that they satisfy an LDP. This implies that our analysis extends easily to an arbitrary 
number of $Cox/G/\infty$ queues in (non-standard) tandem, where the arrivals into each queue constitute a Cox process 
with directing measure given by the number in the previous queue. 

Can we prove an LDP, not just for the queue occupancy measures but for the queue lengths at a fixed time, say for the 
sequence of random variables $Q_n(0)/n = \hat\Phi_n(A_0)/n$? Unfortunately, the map $\mu \mapsto \mu(A_0)$ is not 
continuous in the weak topology, since the indicator function of the set $A_0$ is not a continuous function. Hence, our 
approach of invoking the Contraction Principle does not work. It might be possible to get around this, by sandwiching the 
indicator function of $A_0$ between bounded continuous functions which converge to it pointwise from below and above. 
We could then prove an LDP for the integral of the queue occupancy measure against these functions. If we could calculate 
the rate function explicitly, and show that it approaches the same limit for the functions approximating the indicator from 
above and from below, then that would prove the LDP for the marginal queue length distribution. But as these calculations 
are quite involve, and distract from the main motivation of the present work, we do not pursue them here. 

Next, we turn to the departure process from a $Cox/G/\infty$ queue. While it is not directly relevant to the model 
motivating this work, it is relevant to reaction networks in which the products of one reaction are reactants in the next, 
rather than catalysts as in our model. In that case, one would have a standard tandem of infinite-server queues, 
instead of the non-standard tandems that are the focus of this paper. In addition, the departure process is an object 
of interest in queueing theory. With these motivations, we now describe our results for the departure process.

Let $\Phi_n$ denote the Cox point process of arrivals into the $n^{\rm th}$ system as above, with directing measure 
$\Lambda_n$. Denote by $\hat\Phi_n$ the marked point process obtained by marking each arrival with its service time. 
Let $\Psi_n$ denote the point process of departures, which may be viewed as a random counting measure on $\Rs$. 
From the description of the $\cdot/G/\infty$ queue in terms of point processes given in the Introduction, we see that 
for any interval $[a,b]$, we have 
\begin{equation} \label{eq:dep_measure}
\Psi_n([a,b]) = \hat\Phi_n( {\rm cl}(A_{[a,b]} \backslash A_b)),
\end{equation}
since a customer departs during the interval $[a,b]$ only if it arrives at time $t\leq b$, briniging in an amount of work 
$x$ such that $a \leq t+x\leq b$; here ${\rm cl}(B)$ denotes the closure of a subset $B$ of $\Rs^2$. 
Our next result establishes an LDP for the empirical measures, $\Psi_n$, of the departures from the queue. Hence, the results 
extend easily to a (standard) tandem of such queues. 
\begin{thm} \label{thm:dep_ldp}
Let $\Phi_n$, $n\in \Ns$, be a sequence of Cox arrival processes satisfying Assumptions [A1]-[A3], and let $\hat\Phi_n$ 
be a Cox process obtained by marking the arrivals with iid service times drawn according to a distribution $F$ satisfying 
Assumption [A4]. Let $\Psi_n$ denote the corresponding departure process from an infinite-server queue, as defined 
precisely in (\ref{eq:dep_measure}). Then, $(\Psi_n, n\in \Ns)$ satisfies [A1]-[A3]; in particular, for any fixed compact 
interval $[a,b]$, $(\Psi_n/n)|_{[a,b]}$ obeys an LDP on $\mfinite([a,b])$ equipped with the weak topology, with a good 
rate function $K_{[a,b]}$.
\end{thm}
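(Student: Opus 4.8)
\begin{prf}[Proposal]
The plan is to use the identity (\ref{eq:dep_measure}) in the form of a pushforward: $\Psi_n=\sigma_*\hat\Phi_n$, where $\sigma:(t,x)\mapsto t+x$ is the continuous map sending an (arrival time, service time) pair to its departure time, so that for any compact interval $[a,b]$,
\[
\Psi_n|_{[a,b]}\;=\;\sigma_*\bigl(\hat\Phi_n|_{W_{[a,b]}}\bigr),\qquad W_{[a,b]}:={\rm cl}\bigl(A_{[a,b]}\setminus A_b\bigr)=\{(t,x):a\le t+x\le b,\ x\ge 0\}.
\]
The argument then runs closely parallel to the proof of Theorem~\ref{thm:qldp}. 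The first step is to note that $\hat\Phi_n$ is itself a Cox process on the $\sigma$-compact Polish space $\Rs\times\Rs_+$: conditionally on $\Lambda_n=\lambda$ the arrivals form a Poisson process of intensity $\lambda$, and independent $F$-marking produces, by the marking theorem, a Poisson process of intensity $\lambda\otimes F$; hence $\hat\Phi_n$ is a Cox process with directing measure $\Lambda_n\otimes F$.

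I would then verify [A1] and [A3] for $\Psi_n$ directly. For [A1], translation invariance of the law of $\Psi_n$ descends from that of $\Lambda_n$, the independence of the marks, and the relation $\sigma(t+s,x)=\sigma(t,x)+s$; local finiteness (hence $\sigma$-finiteness) of $\Psi_n$ and the mean identity both follow from
\[
\E\bigl[\Psi_n([a,b])\bigr]=\E\bigl[(\Lambda_n\otimes F)(W_{[a,b]})\bigr]=n\lambda\int_{\Rs}\bigl(F((b-t)^+)-F((a-t)^+)\bigr)\,dt=n\lambda(b-a),
\]
the last step being the elementary identity $\int_0^{b-a}\overline F+\int_0^{b-a}F=b-a$ (assuming, as is standard, that service requirements are positive). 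For [A3], the Laplace functional of a Cox process gives, for $\theta\ge 0$,
\[
\E\bigl[e^{\theta\Psi_n([0,1])}\bigr]=\E\Bigl[\exp\bigl((e^{\theta}-1)(\Lambda_n\otimes F)(W_{[0,1]})\bigr)\Bigr],
\]
and one bounds $(\Lambda_n\otimes F)(W_{[0,1]})\le\sum_k c_k\,\Lambda_n([k,k+1])$ for suitable $c_k\ge 0$ with $C:=\sum_k c_k<\infty$; translation invariance together with H\"older's inequality then reduces $\psi_n^{\Psi}(n\theta)/n=\tfrac1n\log\E[e^{\theta\Psi_n([0,1])}]$ to a bound of the form $\tfrac1n\log\E[e^{(e^{\theta}-1)C\,\Lambda_n([0,1])}]$, which is bounded uniformly in $n$ for $\theta$ near $0$ by [A3] for $\Lambda_n$; the case $\theta<0$ is trivial.

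The LDP of [A2] is obtained by contraction together with a truncation argument, as in Theorem~\ref{thm:qldp}. Fix $[a,b]$ and, for $N\in\Ns$, the compact truncated wedge $W^N:=W_{[a,b]}\cap([-N,b]\times\Rs_+)$. By [A2] for $\Lambda_n$, $(\Lambda_n/n)|_{[-N,b]}$ satisfies an LDP in $\mfinite([-N,b])$ with a good rate function; since $\nu\mapsto\nu\otimes F$ is continuous for the weak topologies (because $t\mapsto\int g(t,x)\,dF(x)$ is bounded and continuous whenever $g$ is), the Contraction Principle gives an LDP for $(\Lambda_n/n)|_{[-N,b]}\otimes F$ in $\mfinite([-N,b]\times\Rs_+)$, and Theorem~\ref{thm:cox_ldp} upgrades it to an LDP for $\hat\Phi_n|_{[-N,b]\times\Rs_+}/n$ with a good rate function. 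Applying the map $\mu\mapsto(\sigma_*\mu)|_{[a,b]}$, which is continuous at measures placing no mass on the endpoints $\{a,b\}$ --- and this holds throughout the support of the rate function, since $\nu\otimes F$ charges no line $\{t+x=c\}$ (e.g.\ when $F$ is diffuse) --- yields an LDP for $\sigma_*(\hat\Phi_n|_{W^N})/n$, i.e.\ for the truncated departure measures. Letting $N\to\infty$, the omitted mass $\hat\Phi_n(W_{[a,b]}\setminus W^N)$ is the count of a Cox process on a region whose $\Lambda_n\otimes F$-content is controlled by tail sums of $\overline F$ which vanish as $N\to\infty$ (this is where Assumption [A4] enters, exactly as in Theorem~\ref{thm:qldp}); using [A3] one then deduces $\lim_{N\to\infty}\limsup_{n\to\infty}\tfrac1n\log\Pb(\hat\Phi_n(W_{[a,b]}\setminus W^N)>n\epsilon)=-\infty$. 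Hence the truncated measures are exponentially good approximations of $\Psi_n|_{[a,b]}/n$, and the standard approximation theorem for large deviations, together with the exponential tightness of $\Psi_n|_{[a,b]}/n$ supplied by [A3] (via a Chernoff bound), gives the LDP with a good rate function $K_{[a,b]}$, obtained as a limit of the truncated rate functions. Since $\Psi_n$ then satisfies [A1]--[A3] while any downstream service distribution of finite mean satisfies [A4], the construction iterates along a standard tandem.

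The main obstacle is the non-compactness of the wedge $W_{[a,b]}$: a customer can arrive arbitrarily far in the past with an arbitrarily long service time and still depart during $[a,b]$, so Theorem~\ref{thm:cox_ldp} cannot be applied to $W_{[a,b]}$ directly, and one must pass through compact truncations and an exponential-approximation argument whose error control is precisely where the tail behaviour of $\overline F$ (Assumption [A4]) is needed. A secondary technical nuisance is that restriction of a measure to $[a,b]$ is not weakly continuous at measures charging the endpoints; this is harmless here because such measures carry infinite cost under the (Cox-inherited) rate function, so the relevant contraction is legitimate on the support of that rate function. Otherwise everything is routine given Theorem~\ref{thm:cox_ldp}; indeed the departure process is, if anything, cleaner than the occupancy process of Theorem~\ref{thm:qldp}, whose occupancy functional involves integrals of indicator functions rather than an honest pushforward.
\end{prf}
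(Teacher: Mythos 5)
Your overall strategy --- realise $\Psi_n$ as the image of the marked arrival measure $\hat\Phi_n$ under the departure-time map and contract, then verify [A1] and [A3] by hand --- is the same as the paper's, and your checks of [A1] and [A3] are essentially fine. But there is a genuine gap at the heart of the contraction step. The map $\mu\mapsto(\sigma_*\mu)|_{[a,b]}$, i.e.\ integration of $\mu$ against $h_g(s,x)=g(s+x)\charfunc\{(s,x)\in{\rm cl}(A_{[a,b]}\setminus A_b)\}$, is \emph{not} weakly continuous: $h_g$ is discontinuous on the lines $\{s+x=a\}$ and $\{s+x=b\}$ unless $g$ vanishes at $a$ and $b$. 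You dismiss this by asserting that the rate function lives on measures charging no such line, ``since $\nu\otimes F$ charges no line $\{t+x=c\}$ (e.g.\ when $F$ is diffuse)''. Two problems: (i) the theorem assumes only [A4] (finite mean), so $F$ may have atoms, and nothing in [A1]--[A3] prevents the intensities $\lambda$ in the effective domain of $\eyelamb_{[a,b]}$ from having atoms either; then $(\lambda\otimes F)(\{s+x=c\})=\sum_{y}\lambda(\{c-y\})F(\{y\})$ can be positive, and so can $\mu(\{s+x=c\})$ for $\mu\ll\lambda\otimes F$. (ii) Even where the assertion holds, ``contract through a map continuous off a set where the rate function is infinite'' is not the contraction principle of \cite[Theorem 4.2.1]{dembo98}; it needs a separate approximation argument that you do not supply. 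The paper avoids the issue entirely: it proves the departure map continuous from the weak topology into the \emph{tempered} topology of Definition~\ref{tempered} (test functions vanishing at $a$ and $b$, which removes the discontinuity of $h_g$), contracts to get the LDP in the tempered topology, and then upgrades to the weak topology via exponential tightness, using $\Psi_n([a,b])\le\hat\Phi_n(A_{[a,b]})$ and the goodness of the rate function of $\hat\Phi_n/n$. You need either this device or a rigorous exponentially-good-approximation argument for the map itself. Relatedly, your closing remark inverts the difficulty: the occupancy functional of Theorem~\ref{thm:qldp} is genuinely weakly continuous (its kernel $(s,x)\mapsto\int_{\max\{a,s\}}^{\min\{s+x,b\}}g(t)\,dt$ is continuous), whereas the ``honest pushforward'' is exactly the map that fails weak continuity.

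A secondary difference: you rebuild the LDP for the marked arrival measure on the non-compact wedge from scratch, via compact truncations and exponentially good approximations with tail control from $\overline F$. That route can be made to work, but it is redundant here, since ${\rm cl}(A_{[a,b]}\setminus A_b)\subseteq A_{[a,b]}$ and the weak-topology LDP for $\hat\Phi_n|_{A_{[a,b]}}/n$ is already available from the proof of Theorem~\ref{thm:qldp} (built in Proposition~\ref{intldp} by Dawson--G\"artner projective limits plus the exponential tightness of Proposition~\ref{etlem}, then fed through Theorem~\ref{thm:cox_ldp}); reusing it also gives $K_{[a,b]}$ as an explicit contraction of $\eyecoxpp_{[a,b]}$ rather than an implicit limit of truncated rate functions. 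The truncation machinery is not where this theorem's difficulty lies; the boundary discontinuity of the departure map is.
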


The $Cox/G/\infty$ model studied in this paper is an instance of a queue in a random environment. The first study of 
infinite-server queues in random environment was in~\cite{ocinneide86}: factorial moments in stationarity were derived 
for the $M/M/\infty$ queue in a Markovian environment, namely one in which the arrival and service rates are modulated 
by a finite state, irreducible, continuous time Markov chain. There has recently been extensive further study of this model, 
including moments for steady state and transient distributions, and large deviation and central limit asymptotics for the 
marginal distribution of the queue length; see~\cite{blom16} for a collation of the results. 
The Markovian assumption on the environment is relaxed in~\cite{jansen16}, where the background process modulating arrivals 
and services in an $M/M/\infty$ queue is just a general c\`adl\`ag stochastic process. An LDP is proved for the queue length at 
an arbitrary fixed time, $t$, whereas we establish a process level LDP, without assuming (conditionally) exponential service times. 
A special type of Cox background process is considered in ~\cite{heemskerk17}, which proves a functional CLT for the scaled 
queue length process. In all of these cases the queue length is viewed as a random c\`adl\`ag function, whereas we view it as 
living on a space of measures.

The proof of Theorem~\ref{thm:cox_ldp} is presented in Section~\ref{sec:cox_ldp}, and the proofs of Theorems 
\ref{thm:qldp} and \ref{thm:dep_ldp} in Section~\ref{sec:qldp}.

\section{Proof of Empirical Measure LDP} \label{sec:cox_ldp}

Our proof of Theorem~\ref{thm:cox_ldp}  relies on a theorem of Chaganty~\cite{chaganty97}, which essentially states that a sequence of probability measures on a product space satisfies an LDP if the corresponding sequences of marginal and conditional probability distributions do so, and certain additional technical conditions are satisfied. For completeness, we include below a statement of this theorem, together with an extension of Sanov's theorem by Baxter and Jain~\cite{baxter88} which is needed to check its conditions, and relevant definitions.

\begin{defin}\label{chagdef} \normalfont
Let $\left(\Omega_1,\mathcal{B}_1\right)$ and $\left(\Omega_2,\mathcal{B}_2\right)$ be two Polish spaces with their associated Borel $\sigma-$fields. Let  $\left\{\nu_n(\cdot,\cdot)\right\}$ be a sequence of transition functions on $\Omega_1\times\mathcal{B}_2$, i.e., $\nu_n(x_1,\cdot)$ is a probability measure on $\left(\Omega_2,\mathcal{B}_2\right)$ for each $x_1\in \Omega_1$ and $\nu_n(\cdot, B_2)$ is a measurable function on $\Omega_1$ for each $B_2 \in \mathcal{B}_2$. We say that the sequence of probability transition functions $\left\{\nu_n(x_1,\cdot),x_1\in\Omega_1\right\}$ satisfies the LDP continuously in $x_1$ with rate function $J(x_1,x_2)$, or simply the LDP continuity condition holds, if:
\begin{enumerate}
	\item For each $x_1\in\Omega_1$, $J(x_1,\cdot)$ is a good rate function on $\Omega_2$, i.e., it is non-negative, lower semicontinuous (l.s.c.), and has compact level sets.
	\item For any sequence $\left\{x_{1n}\right\}$ in $\Omega_1$ such that $x_{1n}\rightarrow x_1$, the sequence of measures $\left\{\nu_n(x_{1n},\cdot)\right\}$ on $\Omega_2$ obeys the LDP with rate function $J(x_1,\cdot)$.
	\item $J(x_1,x_2)$ is l.s.c. as a function of $(x_1,x_2)$.
\end{enumerate}
\end{defin}

\begin{thm}\label{chagthm}
(\cite[Theorem $2.3$]{chaganty97}) Let $\left(\Omega_1,\mathcal{B}_1\right)$, $\left(\Omega_2,\mathcal{B}_2\right)$ 
be two Polish spaces with their associated Borel $\sigma-$fields. Let $\left\{\mu_{1n}\right\}$ be a sequence of probability 
measures on $\left(\Omega_1,\mathcal{B}_1\right)$. Let $\left\{\nu_n(x_1,B_2)\right\}$ be a sequence of probability 
transition functions defined on $\Omega_1\times\mathcal{B}_2$. We define the joint distribution $\mu_n$ on the product space $\Omega_1 \times \Omega_2$, and the marginal distribution $\mu_{2n}$ on $\Omega_2$ by
$$
\mu_n(B_1\times B_2) = \int\limits_{B_1}\nu_n(x_1,B_2)d\mu_{1n}(x_1), \quad \mu_{2n}(B_2) = \mu_n(\Omega_1\times B_2).
$$ 
Suppose that the following two conditions are satisfied:
\begin{enumerate}
	\item $\left\{\mu_{1n}\right\}$ satisfies an LDP with good rate function $I_1(x_1)$.
	\item $\left\{\nu_n(\cdot,\cdot)\right\}$ satisfies the LDP continuity condition with a rate function $J(x_1,x_2)$.
\end{enumerate}
Then the sequence of joint distributions $\left\{\mu_n\right\}$ satisfies a weak LDP on the product space 
$\Omega_1\times\Omega_2$,  with rate function 
\begin{align*} I(x_1,x_2)=I_1(x_1)+J(x_1,x_2). 
\end{align*}
The sequence of marginal distributions $\mu_{2n}$ satisfies an LDP with rate function 
\begin{align*} I_2(x_2)=\inf\limits_{x_1\in\Omega_1}\left[I_1(x_1)+J(x_1,x_2)\right]. 
\end{align*}
Finally, $\left\{\mu_n\right\}$ satisfies the LDP if $I(x_1,x_2)$ is a good rate function.
\end{thm}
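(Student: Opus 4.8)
The plan is to establish the assertion for the joint laws $\{\mu_n\}$ first as a \emph{weak} LDP, then to read off the marginal LDP, and finally to upgrade the joint statement to a full LDP under the additional goodness hypothesis. Write $\pi_i\colon\Omega_1\times\Omega_2\to\Omega_i$ for the coordinate projections, and note that the candidate rate function $I=I_1\circ\pi_1+J$ is non-negative and lower semicontinuous: $I_1$ is l.s.c.\ and $\pi_1$ continuous, while $J$ is jointly l.s.c.\ by part~3 of the LDP continuity condition. A weak LDP is equivalent to the large deviation lower bound on all open sets together with the upper bound on all compact sets (with an l.s.c.\ rate function), and goodness is not needed for it. The engine of the whole proof is a \emph{uniformity lemma}: for fixed $x_1$, a fixed subset $B\subseteq\Omega_2$ and $\epsilon>0$, there is a neighbourhood $U$ of $x_1$ such that $\liminf_n\inf_{y_1\in U}\frac1n\log\nu_n(y_1,B)\ge-\inf_BJ(x_1,\cdot)-\epsilon$ when $B$ is open, and $\limsup_n\sup_{y_1\in U}\frac1n\log\nu_n(y_1,B)\le-\inf_BJ(x_1,\cdot)+\epsilon$ when $B$ is closed. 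Each is proved by contradiction: if it failed for every $U$, one diagonalises over shrinking balls $B(x_1,1/m)$ to manufacture a single sequence $y_{1n}\to x_1$ along a subsequence of which the relevant inequality is violated, contradicting the LDP (with rate $J(x_1,\cdot)$) that $\{\nu_n(y_{1n},\cdot)\}$ must satisfy by the LDP continuity condition.

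Granting the uniformity lemma, the joint lower bound is quick: for open $G$ and $(x_1,x_2)\in G$, pick open $V_1\ni x_1$, $V_2\ni x_2$ with $V_1\times V_2\subseteq G$, so $\mu_n(G)\ge\int_{V_1}\nu_n(y_1,V_2)\,d\mu_{1n}(y_1)\ge\bigl(\inf_{y_1\in U}\nu_n(y_1,V_2)\bigr)\mu_{1n}(U)$ for the neighbourhood $U\subseteq V_1$ from the lemma (open form, $B=V_2$). Combining with the $\{\mu_{1n}\}$-lower bound on the open set $U\ni x_1$ gives $\liminf_n\frac1n\log\mu_n(G)\ge-I_1(x_1)-J(x_1,x_2)-\epsilon$; optimising over $(x_1,x_2)\in G$ and sending $\epsilon\downarrow0$ finishes it. For the upper bound over a compact $K$, fix $\delta>0$; for each $(x_1,x_2)\in K$, l.s.c.\ of $J(x_1,\cdot)$ and of $I_1$ let us choose a box $V_1\times V_2$ whose closures satisfy $\inf_{\overline{V_2}}J(x_1,\cdot)\ge\min(J(x_1,x_2),1/\delta)-\delta$ and $\inf_{\overline{V_1}}I_1\ge\min(I_1(x_1),1/\delta)-\delta$. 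Bounding $\mu_n(V_1\times V_2)\le\bigl(\sup_{y_1\in\overline{V_1}}\nu_n(y_1,\overline{V_2})\bigr)\mu_{1n}(\overline{V_1})$, applying the lemma (closed form) to the first factor, the $\{\mu_{1n}\}$ closed-set upper bound to the second, and $\min(a,c)+\min(b,c)\ge\min(a+b,c)$ (valid for $a,b,c\ge0$), we get $\limsup_n\frac1n\log\mu_n(V_1\times V_2)\le-\min(I(x_1,x_2),1/\delta)+2\delta$; a finite subcover of $K$, a union bound and $\delta\downarrow0$ then yield $\limsup_n\frac1n\log\mu_n(K)\le-\inf_KI$. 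This gives the weak joint LDP with rate $I$.

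For the marginal, the lower bound on an open $G_2\subseteq\Omega_2$ is just the weak LDP lower bound applied to the open set $\Omega_1\times G_2$, producing the constant $\inf_{G_2}I_2$. For the upper bound on a closed $F_2$ fix $c<\inf_{F_2}I_2$; goodness of $I_1$ makes $\{\mu_{1n}\}$ exponentially tight, so writing $K_c=\{I_1\le c\}$ (compact) we have $\mu_{2n}(F_2)\le\mu_{1n}(K_c^c)+\sum_{i=1}^N\mu_n(W_i\times F_2)$, where the $W_i$ are suitably small neighbourhoods of points $x_1^{(i)}\in K_c$ covering $K_c$. On each $W_i$ the uniformity lemma (closed form, applied to the \emph{fixed} set $F_2$) gives $\nu_n(y_1,F_2)\le\exp\bigl(-n(\inf_{F_2}J(x_1^{(i)},\cdot)-\delta)\bigr)$ for all large $n$, and smallness of $W_i$ together with l.s.c.\ of $I_1$ gives $\mu_{1n}(\overline{W_i})\le\exp\bigl(-n(I_1(x_1^{(i)})-\delta)\bigr)$; since $I_1(x_1^{(i)})+\inf_{F_2}J(x_1^{(i)},\cdot)\ge\inf_{F_2}I_2>c$, summing gives $\limsup_n\frac1n\log\mu_{2n}(F_2)\le-c+2\delta$, and then $\delta\downarrow0$, $c\uparrow\inf_{F_2}I_2$. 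Thus $\{\mu_{2n}\}$ satisfies the full LDP with rate $I_2$. Finally, if $I$ is good then — using coercivity of $I_1$ and joint l.s.c.\ of $J$ to see that the infimum defining $I_2$ is attained — $\{I_2\le\alpha\}=\pi_2(\{I\le\alpha\})$ is the continuous image of a compact set, hence compact; so $I_2$ is good, $\{\mu_{2n}\}$ is exponentially tight, and together with exponential tightness of $\{\mu_{1n}\}$ in the first coordinate this makes $\{\mu_n\}$ exponentially tight, whereupon the standard exponential-tightness upgrade promotes the weak joint LDP to a full LDP with good rate function $I$.

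The genuine obstacle is the uniformity lemma: everything else is bookkeeping with boxes, finite subcovers and union bounds, but the LDP continuity condition only furnishes the conditional LDP \emph{along convergent sequences}, and one must bootstrap this into estimates uniform over a whole neighbourhood of $x_1$. The diagonal/subsequence argument sketched above is the tool for that, and it is also where the hypotheses — goodness of $I_1$, lower semicontinuity of $J$, and the continuity of the conditional LDP in $x_1$ — are actually consumed.
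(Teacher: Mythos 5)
The paper does not prove this statement: it is reproduced verbatim from \cite{chaganty97} and used as a black box, so there is no in-paper argument to compare yours against. On its own terms, your proposal is essentially a correct reconstruction of Chaganty's original proof: the ``uniformity lemma'' you isolate (uniform-in-$y_1$ conditional bounds over a neighbourhood of $x_1$, obtained by contradiction and diagonalisation over shrinking balls, which is precisely where the sequential form of the LDP continuity condition is consumed) plays the role of Chaganty's preparatory lemmas, and the box-cover arguments for the weak joint LDP, the reduction of the marginal upper bound to a compact subset of $\Omega_1$, and the final exponential-tightness upgrade are all standard and correctly assembled. Two small repairs are needed. First, in the marginal upper bound you set $K_c=\{I_1\le c\}$ and then bound $\mu_{1n}(K_c^c)$ as though the LDP upper bound applied to it; but $K_c^c$ is open, so you should instead use the compact sets furnished by exponential tightness of $\{\mu_{1n}\}$ (which on a Polish space does follow from the full LDP with good rate function, e.g.\ \cite[Exercise 4.1.10]{dembo98}, but is not the same statement as compactness of the level sets). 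The rest of the argument survives unchanged, since $I_1(x_1)+\inf_{F_2}J(x_1,\cdot)\ge\inf_{F_2}I_2$ holds for every $x_1$, not only for $x_1$ in a level set. Second, for the marginal family to ``satisfy an LDP with rate function $I_2$'' you should also verify that $I_2$ is lower semicontinuous even when $I$ is not good; this follows from goodness of $I_1$ together with joint lower semicontinuity of $J$ by the same compactness-of-minimising-sequences argument you already use in your final paragraph, so it is an omission of a check rather than a gap in the method.
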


\noindent {\bf Remark.} Recall that a sequence of probability measures (or random variables) is said to satisfy a weak LDP if the large 
deviations upper bound holds for all compact sets, and to satisfy a (full) LDP if it holds for all closed sets. For both, the large 
deviations lower bound holds for all open sets.


\begin{thm}\label{baxterthm}
(\cite{baxter88}, Theorem $5$) Let $(S,d)$ be a Polish space. Let $\left\{\alpha_n\right\}$ be a sequence of probability measures on $(S,d)$ converging weakly to a probability measure $\alpha$. For each $n$, let $X^n_i$, $i\in \Ns$ be iid $S-$valued random variables with common distribution $\alpha_n$. Let $\mathcal{M}_1(S)$ denote the space of probability measures on $S$ and let $\overline{\mu}_n \in \mathcal{M}_1(S)$ denote the empirical distribution, $\left(\delta_{X^n_1}+...+\delta_{X^n_n}\right)/n$. Then $\left\{\overline{\mu}_n\right\}$ satisfies the LDP with good rate function $H(\cdot|\alpha)$, which was defined in the statement 
of Theorem~\ref{thm:cox_ldp}. 
\end{thm}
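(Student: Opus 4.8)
The plan is to reduce to the classical Sanov theorem---the special case $\alpha_n\equiv\alpha$---by showing that the triangular-array empirical measures $\overline{\mu}_n$ are \emph{exponentially equivalent} to the empirical measures of an i.i.d.\ sample from the limit $\alpha$, and then transporting the large deviation principle across this equivalence. Write $\rho=d\wedge1$, a bounded metric inducing the topology of $S$; then $W_1$ with respect to $\rho$ (equivalently, the Prokhorov metric) metrizes weak convergence on $\mathcal{M}_1(S)$, so $\alpha_n\to\alpha$ weakly implies that for each $n$ there is a coupling $(X^n_1,Y_1)$ of $\alpha_n$ and $\alpha$ with $\E[\rho(X^n_1,Y_1)]\le\epsilon_n$, where $\epsilon_n\to0$. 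Taking $n$ independent copies $(X^n_i,Y_i)$, $i=1,\dots,n$, of this coupling realises on one probability space both the array $X^n_1,\dots,X^n_n$ (i.i.d.\ with law $\alpha_n$) of the statement and an i.i.d.\ $\alpha$-sample $Y_1,\dots,Y_n$; set $\overline{\mu}_n=\frac1n\sum_{i=1}^{n}\delta_{X^n_i}$ and $\overline{\nu}_n=\frac1n\sum_{i=1}^{n}\delta_{Y_i}$.

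The second step is a distance estimate in a metric for the weak topology. Using the bounded-Lipschitz metric
\[
d_{\mathrm{BL}}(\nu,\nu')=\sup\Bigl\{\,\Bigl|\int g\,d\nu-\int g\,d\nu'\Bigr|\ :\ \|g\|_\infty\le1,\ \mathrm{Lip}_{\rho}(g)\le1\,\Bigr\},
\]
which metrizes the weak topology on $\mathcal{M}_1(S)$, together with the bound $|g(x)-g(y)|\le\rho(x,y)$ valid for every admissible $g$, one gets
\[
d_{\mathrm{BL}}(\overline{\mu}_n,\overline{\nu}_n)\ \le\ \frac1n\sum_{i=1}^{n}\rho(X^n_i,Y_i)\ =\ \frac1n\sum_{i=1}^{n}W^n_i,
\]
where the $W^n_i=\rho(X^n_i,Y_i)$ are i.i.d., lie in $[0,1]$, and have $\E[W^n_1]\le\epsilon_n$. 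A Chernoff bound and the estimate $e^{\lambda w}\le1+(e^{\lambda}-1)w$ for $w\in[0,1]$ give, for every fixed $\delta>0$ and every $\lambda>0$,
\[
\frac1n\log\Pb\bigl(d_{\mathrm{BL}}(\overline{\mu}_n,\overline{\nu}_n)>\delta\bigr)\ \le\ -\lambda\delta+\log\E\bigl[e^{\lambda W^n_1}\bigr]\ \le\ -\lambda\delta+(e^{\lambda}-1)\epsilon_n .
\]
Sending $n\to\infty$ and then $\lambda\to\infty$ shows $\limsup_n\frac1n\log\Pb(d_{\mathrm{BL}}(\overline{\mu}_n,\overline{\nu}_n)>\delta)=-\infty$ for every $\delta>0$, i.e.\ $(\overline{\mu}_n)$ and $(\overline{\nu}_n)$ are exponentially equivalent at speed $n$.

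To conclude, Sanov's theorem gives that $(\overline{\nu}_n)$ obeys the LDP in $\mathcal{M}_1(S)$ with the weak topology and good rate function $H(\cdot\,|\,\alpha)$, and an LDP transfers unchanged---with the rate function still good---across exponentially equivalent sequences. Hence $(\overline{\mu}_n)$ obeys the same LDP, which is the assertion. The remaining points are routine: that $d\wedge1$ is a metric inducing the topology of $S$, that $W_1$ for it metrizes weak convergence, and the quoted statements of Sanov's theorem and of the exponential-equivalence principle.

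I do not expect a real obstacle along this route; the one conceptual step is recognising that the triangular array is exponentially close to the i.i.d.\ array of its limit, after which everything is standard. What makes a more hands-on argument awkward is precisely the large deviation \emph{lower} bound: the usual proof tilts the reference law onto a target $\mu$ with $H(\mu\,|\,\alpha)<\infty$, but here $\mu$ need not be absolutely continuous with respect to any individual $\alpha_n$, so that change of measure is not directly available. If one wanted to avoid invoking classical Sanov, the lower bound could still be recovered by approximating such a $\mu$ from within by measures piecewise proportional to $\alpha$ on a finite partition of $S$ into $\alpha$-continuity sets and reducing to the finite-dimensional multinomial LDP---which follows from the G\"artner--Ellis theorem for triangular arrays, since $\theta\mapsto\log\sum_k\alpha(A_k)e^{\theta_k}$ is finite and smooth---while the matching upper bound would come from Chebyshev's inequality, the convergence $\log\int e^{f}\,d\alpha_n\to\log\int e^{f}\,d\alpha$ for $f\in C_b(S)$, the Donsker--Varadhan variational formula for $H(\cdot\,|\,\alpha)$, and an exponential-tightness bound obtained by Chernoff estimates on $\overline{\mu}_n(K^{c})$ for compacts $K$ witnessing the tightness of $\{\alpha_n\}$.
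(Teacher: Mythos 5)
Your argument is correct, but note that the paper itself offers no proof of Theorem~\ref{baxterthm}: it is quoted as Theorem~5 of Baxter and Jain \cite{baxter88}, whose own derivation proceeds via a general ``comparison principle'' for the large-deviation upper and lower bounds of two sequences of measures. Your route is different and self-contained: you reduce to classical Sanov's theorem for the fixed limit law $\alpha$ by exhibiting, for each $n$, a coupling of $\alpha_n$ and $\alpha$ with $\E[\rho(X^n_1,Y_1)]\le\epsilon_n\to 0$ (legitimate because, for the bounded metric $\rho=d\wedge 1$, the Wasserstein-$1$ distance metrises weak convergence of probability measures on a Polish space, and near-optimal couplings always exist), bounding the bounded-Lipschitz distance between the two empirical measures by the sample mean of the i.i.d.\ $[0,1]$-valued variables $\rho(X^n_i,Y_i)$, and killing that mean at any fixed level $\delta>0$ by a Chernoff bound whose exponent $-\lambda\delta+(e^{\lambda}-1)\epsilon_n$ can be driven to $-\infty$; exponential equivalence plus \cite[Theorem 4.2.13]{dembo98} then transfers the LDP and the goodness of $H(\cdot\,|\,\alpha)$. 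Every step checks out: the event $\{d_{\mathrm{BL}}(\overline{\mu}_n,\overline{\nu}_n)>\delta\}$ is dominated by the measurable event on the sample average, and although the auxiliary variables $Y_1,\dots,Y_n$ depend on $n$ through the coupling, their marginal law is that of an i.i.d.\ $\alpha$-sample, which is all Sanov's theorem requires since an LDP concerns laws only. What your approach buys is transparency and economy: it avoids the machinery of \cite{baxter88} entirely, and pleasingly it is the very same exponential-equivalence device that the paper deploys later, in the proof of Lemma~\ref{condldp1}, to pass from $\hat\Phi_n$ to $\Phi_n$; the cost is that it yields only this special case of Baxter and Jain's comparison principle, which is all the paper needs.
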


The proof of Theorem~\ref{thm:cox_ldp} proceeds through a sequence of lemmas. We begin with an elementary LDP 
for a sequence of Poisson random variables.

\begin{lemma} \label{poisson_ldp}
Let $N_n, n\in \Ns$ be a sequence of Poisson random variables with parameter $n\alpha_n$, and suppose that $\alpha_n$ 
tends to $\alpha \ge 0$. Then the sequence $N_n/n$ obeys an LDP in $\Rs_+$ with good rate function $\eyep(\cdot,\alpha)$ 
defined in the statement of Theorem~\ref{thm:cox_ldp}.
\end{lemma}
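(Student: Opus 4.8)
The plan is to prove the LDP for $N_n/n$ directly from the definitions, using the explicit form of the Poisson probability mass function together with Stirling's approximation. Since $\Rs_+$ is a nice space and the candidate rate function $\eyep(\cdot,\alpha)$ has compact level sets (its sublevel set $\{x : \eyep(x,\alpha) \le c\}$ is a bounded, hence compact, subinterval of $\Rs_+$, as is immediate from the superlinear growth of $x \log(x/\alpha)$ when $\alpha > 0$, and from the fact that the set is $\{0\}$ when $\alpha = 0$), it suffices to establish the large deviations upper and lower bounds. In fact, because $N_n/n$ takes values in a lattice that refines as $n \to \infty$ and the rate function is continuous on its effective domain, it is enough to prove: (i) for every $x \in \Rs_+$, $\lim_{\delta \downarrow 0} \liminf_n \frac1n \log \Pb(|N_n/n - x| < \delta) \ge -\eyep(x,\alpha)$, (ii) for every $x$, $\lim_{\delta \downarrow 0} \limsup_n \frac1n \log \Pb(|N_n/n - x| < \delta) \le -\eyep(x,\alpha)$, and (iii) exponential tightness, which upgrades the weak upper bound on compacts to a full upper bound on closed sets.

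First I would handle the core estimate. For $x > 0$ and $k = \lfloor nx \rfloor$ (or any integer with $k/n \to x$), write $\Pb(N_n = k) = e^{-n\alpha_n}(n\alpha_n)^k/k!$ and apply Stirling's formula $k! = k^k e^{-k}\sqrt{2\pi k}\,(1+o(1))$ to get
\begin{align*}
\frac1n \log \Pb(N_n = k) &= -\alpha_n + \frac{k}{n}\log(n\alpha_n) - \frac{k}{n}\log k + \frac{k}{n} - \frac{1}{2n}\log(2\pi k) + o(1/n) \\
&= -\alpha_n - \frac{k}{n}\log\frac{k}{n\alpha_n} + \frac{k}{n} + o(1) \longrightarrow -\alpha + x - x\log\frac{x}{\alpha} = -\eyep(x,\alpha),
\end{align*}
using $\alpha_n \to \alpha$ and $k/n \to x$. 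Summing over the $O(n\delta)$ integers $k$ with $k/n \in (x-\delta, x+\delta)$ costs only a factor polynomial in $n$, which vanishes under $\frac1n\log(\cdot)$; this gives the lower bound (i), and taking $\delta \downarrow 0$ with continuity of $\eyep(\cdot,\alpha)$ on $(0,\infty)$ finishes it. For the upper bound on a small ball I would bound the sum of $\Pb(N_n = k)$ over the relevant $k$ by $O(n\delta)$ times the maximum term, and check that the maximum of $-\frac kn \log\frac{k}{n\alpha_n} + \frac kn - \alpha_n$ over $k/n \in [x-\delta,x+\delta]$ converges to $\sup_{y \in [x-\delta,x+\delta]} (-\eyep(y,\alpha))$, which tends to $-\eyep(x,\alpha)$ as $\delta \downarrow 0$ by lower semicontinuity. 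The boundary cases need separate attention: $x = 0$ with $\alpha > 0$ is handled by $\Pb(N_n = 0) = e^{-n\alpha_n} \to e^{-n\alpha}$-rate, i.e. rate $\alpha = \eyep(0,\alpha)$; the case $\alpha = 0$ requires noting $N_n$ is Poisson with mean $n\alpha_n \to 0$, so $\Pb(N_n \ge 1) \le n\alpha_n$, which may not decay exponentially — here one uses instead that $\Pb(N_n/n \ge \epsilon) = \Pb(N_n \ge n\epsilon)$ decays like $e^{-n\epsilon \log(\epsilon/\alpha_n)+\dots}$, and since $\alpha_n \to 0$ this rate tends to $+\infty$, matching $\eyep(x,0) = +\infty$ for $x > 0$ and $\eyep(0,0) = 0$.

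The main obstacle I anticipate is the uniformity in the $\alpha_n \to \alpha$ convergence, especially the degenerate regime $\alpha = 0$, where the Poisson parameter itself degenerates and the naive union bound $\Pb(N_n \ge 1) \le n\alpha_n$ is too weak to give the claimed infinite rate; one must instead exploit that $\Pb(N_n \ge j) \le (n\alpha_n)^j/j!$ for each fixed $j$, or equivalently use the moment generating function $\E[e^{\theta N_n}] = \exp(n\alpha_n(e^\theta - 1))$ together with Chebyshev/Chernoff to get $\Pb(N_n \ge nx) \le \exp(-n[\theta x - \alpha_n(e^\theta-1)])$ and optimize over $\theta$. Indeed, a cleaner route overall may be the Gärtner--Ellis theorem: the scaled cumulant generating function $\frac1n \log \E[e^{n\theta \cdot N_n/n}] = \alpha_n(e^\theta - 1) \to \alpha(e^\theta-1) =: \Lambda(\theta)$ for all $\theta \in \Rs$, which is finite, differentiable, and steep, so Gärtner--Ellis gives the LDP with rate function $\Lambda^*(x) = \sup_\theta(\theta x - \alpha(e^\theta-1))$; a direct computation of this Legendre transform yields exactly $\eyep(x,\alpha)$ in all three cases, including $\Lambda^*(x) = +\infty$ for $x > 0$ when $\alpha = 0$ since then $\Lambda(\theta) = 0$ and $\sup_\theta \theta x = +\infty$, and $\Lambda^*(0) = 0$. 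I would present the Gärtner--Ellis argument as the main proof (it is short and handles all cases uniformly), perhaps remarking that the Stirling computation above gives an alternative elementary derivation. Exponential tightness, needed only if one does not already have the full LDP, follows from the same Chernoff bound: $\Pb(N_n/n > M) \le \exp(-n(\theta M - \alpha_n(e^\theta - 1)))$ for any $\theta > 0$, which for fixed $\theta$ tends to $-\infty$ as $M \to \infty$ uniformly in large $n$ since $\alpha_n$ is bounded.
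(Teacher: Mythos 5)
Your proposal is correct, and the route you ultimately designate as your main proof -- Gärtner--Ellis applied to $N_n/n$, with $\frac{1}{n}\log\E[e^{\theta N_n}] = \alpha_n(e^\theta-1) \to \alpha(e^\theta-1)$ and the Legendre transform computed to be $\eyep(\cdot,\alpha)$ in all cases including $\alpha=0$ -- is exactly the paper's proof. The Stirling-based derivation you sketch first is a valid elementary alternative but is not needed once Gärtner--Ellis is invoked.
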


\begin{proof} 
We apply the G\"artner-Ellis theorem \cite[Theorem 2.3.6]{dembo98} to the sequence $N_n/n$. By direct calculation, 
$$
\frac{1}{n}\log\mathbb{E}\left[e^{n\theta\frac{N_n}{n}}\right]=\alpha_n \bigl(e^{\theta}-1 \bigr).
$$
This sequence of scaled log-moment generating functions converges pointwise to the limit $\alpha(e^{\theta}-1)$, which is 
finite and differentiable everywhere (hence also continuous, and essentially smooth). Hence, by the G\"artner-Ellis theorem, 
the sequence of random variables $N_n/n$ obeys an LDP with a rate function which is the convex conjugate of 
$\alpha(e^{\theta}-1)$. A straightforward calculation confirms that this is the function $\eyep(\cdot,\lambda)$ in the 
statement of the lemma, and that it is l.s.c. with compact level sets for each $\alpha$.
\end{proof}

The next two lemmas establish conditional LDPs for the scaled empirical measures of Poisson processes whose scaled intensities converge to a limit.

\begin{lemma}\label{condldp0} 
Let $\Phi_n, n\in \Ns$ be a sequence of  Poisson point processes with intensity measures $n\lambda_n \in \mfinite(E)$, and 
suppose that $\lambda_n$ converge  weakly in $\mfinite(E)$  to the zero measure. Then,  $\Phi_n/n, n\in \Ns$ satisfy the 
LDP in $\mfinite(E)$ equipped with the weak topology, with good rate function
$$
\eyenot(\mu) = \begin{cases}
0, & \mbox{ if } \mu \equiv 0, \\
+\infty, & \mbox{ otherwise.}
\end{cases}
$$
\end{lemma}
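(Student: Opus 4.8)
The plan is to establish the LDP for $\Phi_n/n$ in the weak topology by first proving the large deviations upper and lower bounds, and then separately establishing exponential tightness so as to upgrade the bounds and obtain goodness of the rate function. Since the candidate rate function $\eyenot$ takes only the values $0$ (at the zero measure) and $+\infty$, the lower bound is essentially trivial: for any open set $G \subseteq \mfinite(E)$ containing the zero measure, I need $\liminf_n \frac1n \log \Pb(\Phi_n/n \in G) \ge 0$, which follows if $\Phi_n/n$ puts non-vanishing (on the exponential scale) mass near $0$. Because $\lambda_n \to 0$ weakly and the measures are finite, $\lambda_n(E) \to 0$, so $\Phi_n$ is Poisson with total mass $n\lambda_n(E)$ having expectation $o(n)$; hence $\Pb(\Phi_n = 0) = e^{-n\lambda_n(E)} \to 1$, and in particular $\frac1n \log \Pb(\Phi_n/n = 0) \to 0$. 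Since the zero measure lies in every open $G$ we consider (the only case where the bound is non-vacuous is $0 \in G$, because $\eyenot \equiv +\infty$ off $0$), the lower bound holds.

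For the upper bound, I would show that for any closed set $C \subseteq \mfinite(E)$ not containing the zero measure, $\limsup_n \frac1n \log \Pb(\Phi_n/n \in C) = -\infty$. The point is that $C$ closed and $0 \notin C$ means $0$ is separated from $C$ in the weak topology, so there is a bounded continuous $f : E \to \Rs$ and $\delta > 0$ with $\int_E f\, d\mu \ge \delta$ for all $\mu \in C$ (one can take $f \ge 0$ bounded, e.g. built from a bounded continuous function separating $0$ from $C$; alternatively, since $\{\mu : \mu(E) \le \epsilon\}$ is a weak neighbourhood of $0$ for the constant function $\mathbbm 1$, one can even just use $f \equiv 1$, giving $\mu(E) \ge \delta$ on $C$). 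Then $\Pb(\Phi_n/n \in C) \le \Pb(\int f\, d\Phi_n \ge n\delta)$, and by a Chernoff bound applied to the Laplace functional of the Poisson process $\Phi_n$,
$$
\Pb\Bigl(\int_E f\, d\Phi_n \ge n\delta\Bigr) \le e^{-n\theta\delta}\, \E\Bigl[e^{\theta \int f\, d\Phi_n}\Bigr] = \exp\Bigl(-n\theta\delta + n\int_E (e^{\theta f} - 1)\, d\lambda_n\Bigr)
$$
for every $\theta > 0$. Since $f$ is bounded, $e^{\theta f} - 1$ is bounded, and $\lambda_n(E) \to 0$ forces $\int_E (e^{\theta f} - 1)\, d\lambda_n \to 0$. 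Hence $\limsup_n \frac1n \log \Pb(\Phi_n/n \in C) \le -\theta\delta$, and letting $\theta \to \infty$ gives $-\infty$, as required.

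Finally, I would establish exponential tightness in the weak topology, which simultaneously upgrades the compact-set upper bound to all closed sets and guarantees that the rate function has compact level sets (here trivially, since its only finite level set is the singleton $\{0\}$, which is compact). The natural route is to use that in $\mfinite(E)$ with the weak topology, sets of the form $\{\mu : \mu(E) \le M\} \cap \bigcap_{k} \{\mu : \mu(K_k^c) \le \epsilon_k\}$ — for a well-chosen increasing sequence of compacts $K_k$ exhausting the $\sigma$-compact space $E$ and $\epsilon_k \downarrow 0$ — are relatively compact (a Prokhorov-type criterion), and to control $\Pb(\Phi_n(E) > nM)$ and $\Pb(\Phi_n(K_k^c) > n\epsilon_k)$ by the same Poisson Chernoff bound, using $\lambda_n(E) \to 0$. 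I expect this exponential tightness argument to be the main technical obstacle, since it requires care in choosing the compacts and the slack parameters and in invoking the precise compactness criterion for the weak topology on $\mfinite(E)$; the upper and lower bounds themselves are straightforward given the Poisson structure and the convergence $\lambda_n(E) \to 0$. (This step is closely parallel to the exponential-tightness argument needed in the proof of Theorem~\ref{thm:cox_ldp}, where finiteness of the measures is likewise the crucial ingredient, so the same machinery can be reused.)
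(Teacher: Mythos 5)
Your proposal is correct and follows essentially the same route as the paper: both reduce the upper bound to the total mass $\Phi_n(E)$, which is Poisson with mean $n\lambda_n(E)=o(n)$ (you via a direct Chernoff bound, the paper via its Poisson LDP lemma, after the same observation that a weakly closed set avoiding the zero measure has total mass bounded away from zero), and both get the lower bound from $\Pb(\Phi_n\equiv 0)=e^{-n\lambda_n(E)}\to 1$. The final exponential-tightness step you flag as the main obstacle is actually unnecessary here: your upper bound already holds for all closed (not just compact) sets, and goodness of $\eyenot$ is immediate since its only nonempty sublevel set is the singleton $\{0\}$.
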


\begin{proof}
As the map $\mu \mapsto \mu(E)$ is weakly continuous (the indicator of $E$ is a bounded, continuous function), it follows that 
$\lambda_n(E)$ tends to $\lambda(E)=0$. Let $N_n=\Phi_n(E)$ denote the total number of points in the Poisson process $\Phi_n$. 
Then, $N_n$ is a Poisson random variable with parameter $n\lambda_n(E)$, and it follows from Lemma~\ref{poisson_ldp} that 
$(N_n/n, n\in \Ns)$ obey an LDP with good rate function
$$
\eyep(x,0) = \begin{cases}
0, & \mbox{if } x=0, \\
+\infty, &\mbox{if }x>0.
\end{cases}
$$

Let $F \subset \mfinite(E)$ be closed in the weak topology, and suppose that it does not contain the zero measure. Define
$$
x_F = \inf \{ \mu(E): \mu \in F \}.
$$
We claim that $x_F>0$. Indeed, if $x_F=0$, then we can find a sequence of measures $\mu_n \in F$ such that $\mu_n(E)$ 
tends to zero, i.e., $\int_E 1d\mu_n$ tends to zero. It follows that $\int_E fd\mu_n$ tends to zero for all bounded, measurable, 
non-negative functions $f$, and hence also for all bounded measurable functions. Hence, the sequence $\mu_n$ converges 
weakly to the zero measure, contradicting the assumption that $0\notin F$ and $F$ is closed.

We now have the large deviations upper bound for $F$:
\begin{eqnarray*}
\limsup_{n\to \infty} \frac{1}{n}\log \Pb \Bigl( \frac{\Phi_n}{n} \in F \Bigr) 
&\le& \limsup_{n\to \infty} \frac{1}{n}\log \Pb \Bigl( \frac{\Phi_n(E)}{n} \ge x_F \Bigr) \\
&=& \limsup_{n\to \infty} \frac{1}{n}\log \Pb \Bigl( \frac{N_n}{n} \ge x_F \Bigr) \; = \; -\infty, 
\end{eqnarray*}
where we have used the LDP for $N_n/n$ with rate function $\eyep(\cdot,0)$ and the fact that $x_F>0$ to obtain the last equality.

The large deviations lower bound is trivial for open sets $G$ not containing the zero measure, as the infimum of the rate function 
is infinite on such sets. Now, for $G$ containing the zero measure, we have
\begin{eqnarray*}
\liminf_{n\to \infty} \frac{1}{n}\log \Pb \Bigl( \frac{\Phi_n}{n} \in G \Bigr) 
&\ge& \liminf_{n\to \infty} \frac{1}{n}\log \Pb \Bigl( \frac{\Phi_n}{n} \equiv 0 \Bigr) 
\; = \; \liminf_{n\to \infty} \frac{1}{n}\log \Pb (N_n=0) \\
&=& \liminf_{n\to \infty} (-\lambda_n(E)) \; = \; -\lambda(E) = 0,
\end{eqnarray*}
as $N_n\sim Poi(n\lambda_n(E))$. This completes the proof of the lemma. 
\end{proof}


\begin{lemma}\label{condldp1} 
Let $\Phi_n, n\in \Ns$ be a sequence of  Poisson point processes with intensity measures $n\lambda_n$, and suppose that 
the sequence $\lambda_n$ converges in the weak topology on $\mfinite(E)$  to $\lambda \not\equiv 0$. Then,  
$\Phi_n/n, n\in \Ns$ satisfy the LDP in $\mfinite(E)$ equipped with the weak topology, with good rate function
$$
\eyeone(\mu) = \begin{cases}
\eyep(\mu(E),\lambda(E)) + \mu(E) H \Bigl( \frac{\mu}{\mu(E)} \Bigm| \frac{\lambda}{\lambda(E)} \Bigr), 
& \mbox{ if } \mu \not\equiv 0, \\
\eyep(0,\lambda(E)), & \mbox{ if } \mu \equiv 0.
\end{cases}
$$
Here, $\eyep(\cdot,\cdot)$ and $H(\cdot| \cdot)$ are as defined in Lemma~\ref{poisson_ldp} and Theorem~\ref{baxterthm} respectively.
\end{lemma}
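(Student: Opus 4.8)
The plan is to represent $\Phi_n$ through its standard cluster decomposition and then invoke Chaganty's theorem (Theorem~\ref{chagthm}) with the \emph{scaled total number of points} as the first coordinate. Since $\charfunc_E$ is bounded and continuous, weak convergence $\lambda_n\to\lambda$ forces $\lambda_n(E)\to\lambda(E)>0$, so for all large $n$ we may write $\Phi_n=\sum_{i=1}^{N_n}\delta_{X_i^n}$, where $N_n\sim\mathrm{Poi}\bigl(n\lambda_n(E)\bigr)$ and, independently of $N_n$, the $X_i^n$ are i.i.d.\ with common law $\alpha_n:=\lambda_n/\lambda_n(E)$; note $\alpha_n\to\alpha:=\lambda/\lambda(E)$ weakly in $\mprob(E)$. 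I take $\Omega_1=\Rs_+$ with $\mu_{1n}=\mathrm{law}(N_n/n)$, $\Omega_2=\mfinite(E)$ with the weak topology, and I define the transition kernel $\nu_n(y,\cdot)$ to be the law of $\frac1n\sum_{i=1}^{\lfloor ny\rfloor}\delta_{X_i^n}$; this coincides with the conditional law of $\Phi_n/n$ given $N_n/n=y$ whenever $ny\in\Ns$, and since $\mu_{1n}$ is supported on $\tfrac1n\{0,1,2,\ldots\}$ the induced joint law on $\Omega_1\times\Omega_2$ is exactly $\mathrm{law}(N_n/n,\Phi_n/n)$ (the finitely many indices $n$ with $\lambda_n(E)=0$ are irrelevant to the LDP).

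First I would verify the two hypotheses of Theorem~\ref{chagthm}. Hypothesis~(1) follows from Lemma~\ref{poisson_ldp} applied to the sequence $\lambda_n(E)\to\lambda(E)$: $\mu_{1n}$ obeys an LDP with good rate $I_1(y)=\eyep(y,\lambda(E))$. For hypothesis~(2) (the LDP continuity condition of Definition~\ref{chagdef}) the candidate rate function is
$$
J(y,\mu)=\begin{cases} y\,H\bigl(\tfrac{\mu}{y}\bigm|\alpha\bigr), & y>0,\ \mu(E)=y,\\ 0, & y=0,\ \mu\equiv 0,\\ +\infty, & \text{otherwise.}\end{cases}
$$
That $J(y,\cdot)$ is a good rate function for each $y$, and that $J$ is jointly l.s.c., follow routinely from goodness and lower semicontinuity of the Sanov rate function $H(\cdot|\alpha)$ together with weak continuity of $\mu\mapsto\mu(E)$ and of $(\mu,c)\mapsto\mu/c$ on $\{c>0\}$. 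The substantive point is the second item of Definition~\ref{chagdef}: for every sequence $y_n\to y$ in $\Rs_+$, the laws $\nu_n(y_n,\cdot)$ obey an LDP in the weak topology with rate $J(y,\cdot)$. Write $k_n:=\lfloor ny_n\rfloor$, so $k_n/n\to y$ and $\nu_n(y_n,\cdot)=\mathrm{law}\bigl(\tfrac{k_n}{n}\,\overline\mu_{k_n}\bigr)$, where $\overline\mu_{k_n}:=\tfrac{1}{k_n}\sum_{i=1}^{k_n}\delta_{X_i^n}$ is the empirical measure of $k_n$ i.i.d.\ $\alpha_n$-samples. If $y>0$ then $k_n\to\infty$, and a straightforward extension of the Baxter--Jain theorem (Theorem~\ref{baxterthm}) to the empirical measure of the first $k_n$ samples of a triangular array with row laws $\alpha_n\to\alpha$ gives that $\overline\mu_{k_n}$ satisfies an LDP at speed $k_n$ with good rate $H(\cdot|\alpha)$; since $k_n/n\to y$ this is an LDP at speed $n$ with good rate $y\,H(\cdot|\alpha)$, and contracting through the continuous map $\nu\mapsto y\nu$ from $\mprob(E)$ into $\mfinite(E)$ — noting that $\tfrac{k_n}{n}\overline\mu_{k_n}$ and $y\overline\mu_{k_n}$ differ by a deterministic quantity of bounded-Lipschitz norm at most $|k_n/n-y|\to 0$, hence share the LDP — yields the LDP at speed $n$ with rate $J(y,\cdot)$. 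If $y=0$ then $k_n/n\to0$, so $\tfrac{k_n}{n}\overline\mu_{k_n}$ has deterministic total mass $k_n/n\to0$ and eventually lies in any prescribed weak neighbourhood of the zero measure and outside any weakly closed set avoiding it (such a set having strictly positive infimal total mass, exactly as in the proof of Lemma~\ref{condldp0}); this gives the LDP with rate $J(0,\cdot)$.

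With both hypotheses in hand, Theorem~\ref{chagthm} produces a weak LDP for $(N_n/n,\Phi_n/n)$ with rate $I(y,\mu)=\eyep(y,\lambda(E))+J(y,\mu)$ and an LDP for the marginal $\Phi_n/n$ with rate $\eyeone(\mu)=\inf_{y\ge 0}\bigl[\eyep(y,\lambda(E))+J(y,\mu)\bigr]$. For $\mu\not\equiv0$ the infimum forces $y=\mu(E)>0$, giving $\eyeone(\mu)=\eyep(\mu(E),\lambda(E))+\mu(E)\,H\bigl(\tfrac{\mu}{\mu(E)}\bigm|\tfrac{\lambda}{\lambda(E)}\bigr)$, while for $\mu\equiv0$ only $y=0$ is admissible and $\eyeone(0)=\eyep(0,\lambda(E))$; this is precisely the asserted formula. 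It remains to establish goodness, which I would do by showing that the joint rate $I$ has compact level sets directly: on $\{I\le\ell\}$ one has $J\ge 0$, so goodness of $\eyep(\cdot,\lambda(E))$ confines $y$ to a bounded interval $[0,m]$ and forces $\mu(E)=y\le m$, while $y\,H(\mu/y|\alpha)\le\ell$ makes the family of admissible $\mu$'s tight — splitting into $\mu(E)<\varepsilon$ (then $\mu(E\setminus K)<\varepsilon$ for any $K$) and $\mu(E)\ge\varepsilon$ (then $H(\mu/y|\alpha)\le\ell/\varepsilon$, so $\mu/y$ ranges over a weakly compact, hence tight, subset of $\mprob(E)$) and taking a common compact $K_\varepsilon$ — so by Prokhorov's theorem the (closed) level set is compact. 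Chaganty's final clause then upgrades the joint statement to a full LDP, and $\eyeone$, being the image of the good rate function $I$ under the continuous projection $(y,\mu)\mapsto\mu$, is itself a good rate function, completing the proof. The step I expect to be the main obstacle is exactly this conditional LDP: one must push Theorem~\ref{baxterthm} from ``the first $n$ samples'' to ``the first $k_n\sim yn$ samples'' of the array and then pass cleanly from speed $k_n$ to speed $n$ while absorbing the deterministic factor $k_n/n\to y$ and handling the degenerate case $y=0$ separately.
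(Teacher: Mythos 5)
Your proposal follows essentially the same route as the paper's proof: the cluster representation $\Phi_n=\sum_{i=1}^{N_n}\delta_{X_i^n}$, Chaganty's theorem with $N_n/n$ as the first coordinate, the Baxter--Jain theorem (extended to $k_n\sim yn$ samples from the array) for the conditional LDP, a bounded-Lipschitz perturbation argument to absorb the discrepancy between $\lfloor ny_n\rfloor/n$ and $y$, and a separate treatment of the degenerate case $y=0$. The only differences are presentational --- you are more explicit than the paper about the triangular-array extension of Theorem~\ref{baxterthm} and you supply a direct compact-level-set argument for goodness, both of which are welcome.
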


\begin{proof} 
We will prove the lemma by first establishing an LDP for the sequence $N_n/n$, then verifying that conditional on this, $\Phi_n/n$ 
satisfies the LDP continuously, and invoking Theorem~\ref{chagthm}.

The LDP for $N_n/n$, with rate function $\eyep(\cdot,\lambda(E))$, is immediate from Lemma~\ref{poisson_ldp} since $\lambda_n(E)$ 
tends to $\lambda(E)$. We now prove an LDP for $\Phi_n/n$, conditional on $N_n/n$. Fix a sequence $N_n$ such that $N_n/n \to x\ge 0$. If $x=0$, then the proof follows that 
of Lemma~\ref{condldp0}, and yields $\eyenot$ as the rate function.

It remains to consider $x>0$. We can write 
\begin{align*} \Phi_n = \delta_{X^n_1}+\delta_{X^n_2}+\ldots+\delta_{X^n_{N_n}},
\end{align*} 
where the $X^n_i$ are iid, with law $\frac{\lambda_n}{\lambda_n(E)}$. Note that the probability law of $X^n_i$ is well-defined for all $n$ 
sufficiently large, as $\lambda_n(E)$ tends to $\lambda(E)>0$. Define
$$
\hat \Phi_n = \delta_{X^n_1}+\delta_{X^n_2}+\ldots+\delta_{X^n_{\lfloor nx \rfloor}},
$$
where the dependence of $\hat \Phi_n$ on $x$ has been suppressed in the notation. We claim that the sequences 
$\Phi_n/n$ and $\hat \Phi_n/n$ are exponentially equivalent (see~\cite[Definition 4.2.10]{dembo98}). To see this, we 
use the fact that the weak topology on $\mfinite(E)$ can be metrised, for instance by the Kantorovich-Rubinstein metric, 
$$
\dkr(\mu,\nu) = \sup_{f\in {\rm Lip}(1), \| f \|_{\infty}\le 1} \; \int_E fd\mu - \int_E fd\nu.
$$
It is easy to see that, for all bounded measurable $f$,
$$
\Bigm| \int_E f d\Phi_n - \int_E fd\hat \Phi_n \Bigm| \; \leq \; \| f \|_{\infty} \; \bigm| N_n-\lfloor nx \rfloor \bigm|,
$$
and so, $\dkr(\Phi_n/n, \hat \Phi_n/n)$ tends to zero deterministically, as $N_n/n$ tends to $x$ deterministically. This 
establishes the exponential equivalence of the two sequences.

Now, we have from Theorem~\ref{baxterthm} and the observation that $\lambda_n(\cdot)/\lambda_n(E)$ converges 
weakly to $\lambda(\cdot)/\lambda(E)$, that $(\hat \Phi_n/\lfloor nx \rfloor, \lfloor nx \rfloor \in \Ns)$ obey an LDP in 
$\mprob(E)$ with good rate function $H \bigl(\cdot \bigm| \frac{\lambda}{\lambda(E)} \bigr)$, and hence also in 
$\mfinite(E)$ with rate function which is the same on $\mprob(E)$, and infinite outside it. It follows that $(\hat \Phi_n/n, n \in \Ns)$ 
obey an LDP in $\mfinite(E)$ with rate function
\begin{equation} \label{condldpx_rate}
\eyex(\mu) = \begin{cases}
xH \Bigl( \frac{\mu}{x} \Bigm| \frac{\lambda}{\lambda(E)} \Bigr), &\mbox{ if } \frac{\mu}{x} \in \mprob(E), \\
+\infty, &\mbox{ otherwise.}
\end{cases}
\end{equation}
Finally, by~\cite[Theorem 4.2.13]{dembo98}, $(\Phi_n/n, n \in \Ns)$ obey an LDP in $\mfinite(E)$ with the same rate function 
$\eyex$, as they are exponentially equivalent to $\hat \Phi_n/n$.

Having established conditional LDPs for $\Phi_n/n$, conditional on $N_n/n$ tending to $x$, we now need to check the LDP 
continuity conditions in Definition~\ref{chagdef} with $\Omega_1=\Rs_+$ and $\Omega_2=\mfinite(E)$, and transition 
function $\nu_n(x,\cdot)$ defined as the law of $\Phi_n$ conditional on $N_n=\lfloor nx \rfloor$. We defne the function 
$$
J(x,\mu) = \begin{cases}
\eyenot(\mu), &\mbox{ if } x=0, \\
\eyex(\mu) &\mbox{ if } x>0,
\end{cases}
$$
where $\eyenot$ is defined in Lemma~\ref{condldp0} and $\eyex$ in \eqref{condldpx_rate}. Note that $J$ is non-negative 
as $\eyenot$ and $\{ \eyex, x\ge 0 \}$ are all non-negative.

The first condition in Definition~\ref{chagdef} holds trivially if $x=0$, as all level sets are singletons comprised of the zero measure; 
if $x>0$, the condition follows from the goodness of the relative entropy function, which is well known from Sanov's theorem (see, 
e.g.,~\cite[Theorem 6.2.10]{dembo98}). In a bit more detail, given $\alpha>0$, the level set 
\begin{align*} L_{\alpha} = \left\{\mu\in \mprob(E): H\left(\mu \left|\frac{\lambda}{\lambda(E)}\right.\right) \le \frac{\alpha}{x}\right\}
\end{align*} 
is compact in $\mprob(E)$ equipped with the weak topology; hence, so is its image under the continuous map $\mu \mapsto x\mu$ 
from $\mprob(E)$ to $\mfinite(E)$.

The second condition in Definition~\ref{chagdef} is precisely the content of the conditional LDPs that we just obtained. That leaves us 
to check the third condition, which is that $J(x,\mu)$ is l.s.c. in $(x,\mu)$. As $\Rs_+ \times \mfinite(E)$ is a metric space, we 
can check this along sequences. Consider a sequence $(x_n,\mu_n)$ converging to $(x,\mu)$. If $(x,\mu)=(0,0)$, then $J(x,\mu)=0$, 
which is no bigger than $\liminf J(x_n,\mu_n)$. If $x=0$ and $\mu \not\equiv 0$, then $\mu(E)>0$ and so, for all $n$ sufficiently large, 
$x_n < \mu_n(E)$; consequently, $\mu_n/x_n$ is not a probability measure, and $J(x_n,\mu_n)=+\infty$. The same reasoning applies 
if $x>0$ and $\mu/x \notin \mprob(E)$. Finally, suppose $x>0$ and $\mu/x \in \mprob(E)$, so that $\mu_n/x_n$ converges weakly 
to $\mu/x$ in $\mfinite(E)$. We may restrict attention to the subsequence of $\Ns$ for which $\mu_n/x_n$ are probability measures, 
as $J(x_n,\mu_n)=+\infty$ otherwise. Along this subsequence, the desired inequality $\liminf \eyexn(\mu_n) \ge \eyex(\mu)$ 
follows from the lower semicontinuity of $H$, the relative entropy function.

We are now in a position to invoke Theorem~\ref{chagthm}, with $\Omega_1=\Rs_+$ and $\Omega_2=\mfinite(E)$. The second 
condition in the theorem is a conditional LDP for $\Phi_n/n$ given that $N_n/n$ tends to $x$, which we have just verified. The first 
condition is an LDP for $N_n/n$, which was proved in Lemma~\ref{poisson_ldp}. Hence, the conclusion of Theorem~\ref{chagthm} 
holds, i.e., we have an LDP for $\Phi_n/n$ with rate function
$$
I_2(\mu) = \inf_{x\in \Rs_+} \left\{\eyep(x,\lambda(E))+ J(x,\mu)\right\}.
$$
As $J(x,\mu)=+\infty$ unless $x=\mu(E)$, it is clear that the infimum is attained at $x=\mu(E)$, and we have 
\begin{align*} I_2(\mu)=\eyep(\mu(E),\lambda(E))+J(\mu(E),\mu).
\end{align*} 
This coincides with the rate function in the statement of the lemma, and concludes its proof. 
\end{proof}

We now have all the ingredients required to complete the proof of Theorem~\ref{thm:cox_ldp}.

\noindent{\bf Proof of Theorem~\ref{thm:cox_ldp}.}
We invoke Theorem~\ref{chagthm} with $\Omega_1$ and $\Omega_2$ both being the space of finite non-negative measures on $E$, 
equipped with the weak topology and the corresponding Borel $\sigma$-algebra. The sequence $\mu_{1n}$ will denote the laws 
of the directing (intensity) measures $\Lambda_n$, and the probability transition functions $\nu_n(\lambda,\cdot)$ will denote the 
law of the scaled Poisson random measures $\Phi_n/n$, where $\Phi_n$ has intensity $n\lambda$. We now check the assumptions 
of the theorem.

The first condition in Theorem~\ref{chagthm} is an LDP for $(\Lambda_n/n, n\in \Ns)$ with a good rate function, which holds by 
assumption. To check the second condition in Theorem~\ref{chagthm}, define
$$
J(\lambda,\mu) = \begin{cases}
\eyenot(\mu), &\mbox{ if } \lambda \equiv 0, \\
\eyeone(\mu), &\mbox{ otherwise,}
\end{cases}
$$
where $\eyenot$ and $\eyeone$ are as defined in Lemmas~\ref{condldp0} and \ref{condldp1}. We need to check that 
the conditions in Definition~\ref{chagdef} are satisfed. The first condition is satisfied as $\eyenot$ and $\eyeone$ are both 
good rate functions, as shown in Lemmas~\ref{condldp0} and \ref{condldp1}. The second condition is the content of the 
conditional LDPs established in these lemmas. That leaves us to check the third condition, that $J(\cdot,\cdot)$ is l.s.c..  
As the weak topology on $\mfinite(E)$ is metrisable, so is the product topology on $\mfinite(E)\times \mfinite(E)$, and 
we can check lower semicontinuity along sequences. Consider a sequence $(\lambda_n,\mu_n)$ converging to 
$(\lambda,\mu)$, i.e., $\lambda_n$ converges weakly to $\lambda$, and $\mu_n$ to $\mu$. We distinguish four cases:
\begin{enumerate}
\item
If $\lambda \equiv 0$ and $\mu \equiv 0$, then $J(\lambda,\mu)=\eyenot(\mu)=0$, which is no bigger than the limit infimum 
of a non-negative sequence.
\item
If $\lambda \equiv 0$ and $\mu \not \equiv 0$, then $J(\lambda,\mu)=\eyenot(\mu)=+\infty$. But note that 
$\lambda_n(E) \to \lambda(E)=0$ and $\mu_n(E) \to \mu(E)>0$, and so $\eyep(\mu_n(E),\lambda_n(E)) \to +\infty$. 
As 
\begin{align*} J(\lambda_n,\mu_n) = \eyeone(\mu_n) \ge \eyep(\mu_n(E),\lambda_n(E)),
\end{align*} 
we see that $J(\lambda_n,\mu_n)$ also tends to infinity.
\item
If $\lambda \not\equiv 0$ and $\mu \equiv 0$, then $J(\lambda,\mu)=\eyeone(\mu)=\eyep(0,\lambda(E))$. On the other hand, 
$J(\lambda_n,\mu_n) \ge \eyep(\mu_n(E),\lambda_n(E))$, which tends to $\eyep(0,\lambda(E))$ as $n$ tends to infinity, as 
$\eyep$ is continuous.
\item
Finally, suppose that $\lambda \not\equiv 0$ and $\mu \not \equiv 0$. In this case, for all $n$ sufficiently large, both 
$\lambda_n$ and $\mu_n$ are non-zero measures, and we have $J(\lambda_n,\mu_n)=\eyeone(\mu_n)$. As $\lambda_n(E)$ 
and $\mu_n(E)$ converge to $\lambda(E)$ and $\mu(E)$ respectively, it is easy to see that $\eyep(\mu_n(E),\lambda_n(E))$ 
tends to $\eyep(\mu(E),\lambda(E))$. Hence, to verify lower semicontinuity, it suffices to show that $H(\beta | \alpha)$ is 
jointly l.s.c. in its arguments. Recall the Donsker-Varadhan variational formula for the relative entropy (see, 
e.g.,~\cite[Sec. C.2]{dupuis97}):
\begin{eqnarray*} 
H(\beta|\alpha)=\sup_{g\in C_b(E)}\left\{\int\limits_E g d\beta- \log\int\limits_E e^{g}d\alpha \right\},
\end{eqnarray*}
where $C_b(E)$ denotes the set of bounded continuous functions on $E$. But if $g\in C_b(E)$, so is $e^g$, and the map 
\begin{align*} (\alpha,\beta)\longmapsto \int\limits_E gd\beta -\log\int\limits_E e^{g} d\alpha
\end{align*}
is continuous. Consequently, $H(\beta | \alpha)$, being the supremum of continuous functions of $(\alpha,\beta)$, is l.s.c..
\end{enumerate}

Thus, we have checked all the conditions of Theorem~\ref{chagthm}. Hence, the conclusion of the theorem holds, and yields that 
$(\Phi_n/n, n\in \Ns)$ obey an LDP on $\mfinite(E)$, with rate function
$$
\vareyetwo(\mu) = \inf_{\lambda \in \mfinite(E)} \left\{\vareyeone(\lambda) + J(\lambda, \mu)\right\},
$$
where $J(\lambda,\mu)$ equals $\eyenot(\mu)$ if $\lambda\equiv 0$ and $\eyeone(\mu)$ otherwise, and $\eyenot$ and 
$\eyeone$ are defined in Lemmas~\ref{condldp0} and \ref{condldp1} respectively. Using those definitions, we can write the 
rate function more explicitly as follows:
$$
\vareyetwo(\mu) = \begin{cases}
\inf_{\lambda} \left\{\vareyeone(\lambda)+\lambda(E)\right\}, & \mbox{ if $\mu \equiv 0$,} \\
\inf_{\lambda} \left\{\vareyeone(\lambda) + \eyep(\mu(E),\lambda(E)) +\mu(E) H\bigl( \frac{\mu}{\mu(E)} \bigm| 
\frac{\lambda}{\lambda(E)} \bigr)\right\}, & \mbox{ if $\mu \not \equiv 0$,}
\end{cases}
$$
where the infimum is taken over all finite Borel measures $\lambda$ on $E$. The expression above coincides with that in 
the statement of the theorem.

It remains only to check that the rate function $\vareyetwo$ is good. This is a consequence of Lemma~\ref{lem:exp_tight} 
below, which establishes the exponential tightness of the scaled empirical measures $\Phi_n/n$, and~\cite[Lemma 1.2.18]{dembo98}. 
This completes the proof of Theorem~\ref{thm:cox_ldp}. \hfill $\Box$

We first state a proposition which provides an explicit construction of compact subsets of $\mfinite(E)$, and which we will need 
for the proof of Lemma~\ref{lem:exp_tight}. The proof of the proposition is deferred until after the lemma, and is where the 
assumption of $\sigma$-compactness of $E$ is required.

\begin{prop} \label{compact_set_measures}
Let $K_1 \subseteq K_2 \subseteq \ldots$ be a nested sequence of compact subsets of $E$, whose union is equal to $E$; 
such a sequence exists by the assumption that $E$ is $\sigma$-compact. Let $\epsilon_0 \ge \epsilon_1 \ge \ldots$ be a 
sequence of real numbers decreasing to zero. Define $K_0$ to be the empty set. Then, the set 
$$
L_{(K_n,\epsilon_n)} = \bigl\{ \mu \in \mfinite(E): \mu(K_n^c) \le \epsilon_n \; \forall \; n\in \Ns \bigr\},
$$
is compact in the weak topology on $\mfinite(E)$. Moreover, if $\mathcal{K}$ is any compact subset of $\mfinite(E)$, 
and $\epsilon_n, n\in \Ns_+$ any sequence decreasing to 0, then there exist $\epsilon_0>0$ and compact 
$K_1 \subseteq K_2 \subseteq \ldots \subseteq E$ such that $\mathcal{K} \subseteq L_{(K_n,\epsilon_n)}$.
\end{prop}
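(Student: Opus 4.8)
The plan is to treat the two assertions separately, using the fact — already invoked in the proof of Lemma~\ref{condldp1} — that the weak topology on $\mfinite(E)$ is metrisable, so that compactness and sequential compactness coincide throughout.

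For the first assertion I would argue by sequential compactness. Since $K_0=\emptyset$, every $\mu\in L_{(K_n,\epsilon_n)}$ satisfies $\mu(E)=\mu(K_0^c)\le\epsilon_0$, so these measures have uniformly bounded total mass. Given a sequence $(\mu_j)\subset L_{(K_n,\epsilon_n)}$, I would restrict each $\mu_j$ to the compact set $K_m$; on the compact metric space $K_m$ the set of finite Borel measures with mass at most $\epsilon_0$ is weakly sequentially compact (a standard fact, tightness there being automatic), so $(\mu_j|_{K_m})_j$ has a weakly convergent subsequence. A diagonal argument over $m\in\Ns$ then yields a single subsequence (not relabelled) along which $\mu_j|_{K_m}\to\sigma_m$ weakly on $K_m$ for every $m$. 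Each $\sigma_m$ is a finite measure supported on $K_m$ (its mass on the open set $K_m^c$ is at most $\liminf_j\mu_j(K_m^c)=0$), the $\sigma_m$ are non-decreasing in $m$, and $\sigma_m(E)\le\epsilon_0$; hence $\mu:=\sup_m\sigma_m=\lim_m\sigma_m$ is a well-defined finite Borel measure with $(\mu-\sigma_m)(E)\to0$. I would then verify $\mu_j\to\mu$ weakly by a $3\epsilon$ argument: for $f\in C_b(E)$ and any $m$, write $\int f\,d\mu_j-\int f\,d\mu = \bigl(\int_{K_m}f\,d\mu_j-\int f\,d\sigma_m\bigr)+\int_{K_m^c}f\,d\mu_j+\bigl(\int f\,d\sigma_m-\int f\,d\mu\bigr)$, where the first bracket vanishes as $j\to\infty$ by weak convergence on $K_m$, the middle term is bounded by $\|f\|_\infty\mu_j(K_m^c)\le\|f\|_\infty\epsilon_m$, and the last bracket is bounded by $\|f\|_\infty(\mu-\sigma_m)(E)\to0$ as $m\to\infty$. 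Finally $\mu\in L_{(K_n,\epsilon_n)}$ by lower semicontinuity of $\nu\mapsto\nu(K_n^c)$ on the open set $K_n^c$: $\mu(K_n^c)\le\liminf_j\mu_j(K_n^c)\le\epsilon_n$. Thus $L_{(K_n,\epsilon_n)}$ is sequentially compact, hence compact.

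For the second assertion, let $\mathcal{K}\subseteq\mfinite(E)$ be weakly compact and $(\epsilon_n)_{n\ge1}$ decrease to $0$. Since $\nu\mapsto\nu(E)$ is weakly continuous, $M:=\sup_{\mu\in\mathcal{K}}\mu(E)<\infty$; set $\epsilon_0:=\max(M,\epsilon_1)$, which is positive and keeps $(\epsilon_n)_{n\ge0}$ non-increasing. As $E$ is Polish, Prokhorov's theorem applies (its finite-measure form follows from the classical one by normalising), and the compact — hence relatively compact — family $\mathcal{K}$ is uniformly tight: for each $k\ge1$ there is a compact $\tilde K_k\subseteq E$ with $\sup_{\mu\in\mathcal{K}}\mu(\tilde K_k^c)\le\epsilon_k$. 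Put $K_k:=\tilde K_1\cup\cdots\cup\tilde K_k$, which is compact and nested in $k$; then for every $\mu\in\mathcal{K}$ we have $\mu(K_k^c)\le\mu(\tilde K_k^c)\le\epsilon_k$ for $k\ge1$ and $\mu(K_0^c)=\mu(E)\le M\le\epsilon_0$, so $\mathcal{K}\subseteq L_{(K_n,\epsilon_n)}$.

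The step I expect to require the most care is the passage, in the first assertion, from the ``coordinatewise'' weak limits on the compacts $K_m$ to a genuine weak limit on $E$. This is precisely where $\sigma$-compactness of $E$ enters: one cannot extract a weak limit directly on the non-compact space $E$, and must instead exhaust $E$ by the $K_m$, diagonalise, and then exploit the uniform tightness $\sup_{\mu\in L_{(K_n,\epsilon_n)}}\mu(K_m^c)\le\epsilon_m\to0$ built into the definition to prevent mass from escaping to infinity (or piling up on the boundaries $\partial K_m$) and to close the $3\epsilon$ estimate. One must also be careful that $\sigma_m$ need not coincide with the restriction $\mu|_{K_m}$; the argument uses only $\sigma_m\uparrow\mu$ together with $\sigma_m(K_m^c)=0$.
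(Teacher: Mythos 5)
Your proposal is correct, and for the converse direction it is essentially the paper's argument (continuity of $\mu \mapsto \mu(E)$ on the compact $\mathcal{K}$ to get $\epsilon_0$, a Prokhorov-type uniform tightness statement for compact families of finite measures, then nesting the resulting compacts). For the compactness of $L_{(K_n,\epsilon_n)}$ the skeleton is also the same — sequential compactness via the exhaustion $K_m$, diagonal extraction, and a three-term estimate exploiting the uniform bounds $\mu_j(K_m^c)\le\epsilon_m$ — but you construct the limit measure differently. The paper verifies consistency of the partial limits $\tilde\mu_n$ under the restriction maps and invokes Yamasaki's generalisation of the Kolmogorov extension theorem to produce a $\tilde\mu$ with $\tilde\mu|_{K_n}=\tilde\mu_n$; you instead observe that the partial limits $\sigma_m$ are monotone in $m$ (which does pass to weak limits, by testing against non-negative continuous functions and then using regularity) and take $\mu=\lim_m\sigma_m$ as a monotone limit, with $(\mu-\sigma_m)(E)\to 0$ closing the estimate. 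Your route is arguably cleaner: it sidesteps the extension theorem and the consistency check entirely, at the price of having to accept that $\sigma_m$ may differ from $\mu|_{K_m}$ (mass accumulating on $\partial K_m$), which, as you note, your argument never needs. Two small points: your appeal to ``Prokhorov by normalising'' needs a word about measures of small or zero total mass (the paper simply cites Bogachev's finite-measure version); and in the converse you should enlarge the $K_k$ so that their union is all of $E$ (harmless, since enlarging $K_k$ only shrinks $K_k^c$), as the definition of $L_{(K_n,\epsilon_n)}$ — and hence its compactness via the first part — presupposes this.
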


\begin{lemma} \label{lem:exp_tight}
Suppose that $(\Lambda_n, n\in \Ns)$ is a sequence of random finite Borel measures on a Polish space $(E,d)$, which 
satisfy the assumptions of Theorem~\ref{thm:cox_ldp}. Let $(\Phi_n, n\in \Ns)$ be a sequence of Cox point processes 
on $E$, with stochastic intensities $\Lambda_n$. Then, the sequence of random measures $\Phi_n/n$ is exponentially 
tight in $\mfinite(E)$ equipped with the weak topology.
\end{lemma}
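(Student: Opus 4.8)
The plan is to exploit the explicit description of compact sets in $\mfinite(E)$ given by Proposition~\ref{compact_set_measures}: it suffices to produce, for each $M>0$, a nested sequence of compact sets $K_1 \subseteq K_2 \subseteq \ldots$ in $E$ and a sequence $\epsilon_n \downarrow 0$ (together with $\epsilon_0$) such that
\begin{align*}
\limsup_{n\to\infty} \frac{1}{n} \log \Pb\bigl( \Phi_n/n \notin L_{(K_m,\epsilon_m)} \bigr) \le -M.
\end{align*}
By a union bound over $m$, it is enough to control $\Pb\bigl( \Phi_n(K_m^c) > n\epsilon_m \bigr)$ for each $m$, with room to spare so that the sum over $m$ does not spoil the bound. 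The first step is therefore to reduce the tail of $\Phi_n$ on a set $B$ to the tail of $\Lambda_n$ on $B$: conditionally on $\Lambda_n = \lambda$, $\Phi_n(B)$ is Poisson with mean $\lambda(B)$, so a Chernoff bound gives, for any $\theta>0$,
\begin{align*}
\Pb\bigl( \Phi_n(B) > n\epsilon \bigr) \le e^{-\theta n \epsilon} \, \E\bigl[ \exp\bigl( (e^{\theta}-1)\Lambda_n(B) \bigr) \bigr].
\end{align*}

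The second step is to bound the right-hand side using the LDP for $\Lambda_n/n$. Since $\vareyeone$ is a good rate function, the sequence $\Lambda_n/n$ is itself exponentially tight in the weak topology, so by Proposition~\ref{compact_set_measures} (applied to the level sets of $\vareyeone$, or directly to exponential tightness) we can choose, for the target level $M$, compact sets $\tilde K_m$ and constants $\tilde\epsilon_m \downarrow 0$ with $\limsup_n \tfrac1n \log \Pb(\Lambda_n(\tilde K_m^c) > \tilde\epsilon_m) \le -(M+m+1)$, say. Then take $K_m = \tilde K_m$ and choose $\epsilon_m$ comfortably larger than $\tilde\epsilon_m$ but still with $\epsilon_m \downarrow 0$; on the event $\{\Lambda_n(K_m^c) \le n\tilde\epsilon_m\}$ the conditional Poisson mean is at most $n\tilde\epsilon_m$, so $\Pb(\Phi_n(K_m^c) > n\epsilon_m)$ is bounded by $\Pb(\Lambda_n(K_m^c) > n\tilde\epsilon_m)$ plus a pure Poisson tail $\Pb(\mathrm{Poi}(n\tilde\epsilon_m) > n\epsilon_m)$, the latter being $\le e^{-cn}$ with $c$ as large as we like by taking $\epsilon_m/\tilde\epsilon_m$ large (this is where finiteness of the measures is used: we only ever need to control a finite total mass, so the Poisson tails are genuinely small). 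Summing the geometric-in-$m$ bounds and using the standard fact that $\tfrac1n\log(\sum_m a_n^{(m)}) \le \max_m \tfrac1n \log a_n^{(m)} + o(1)$ yields the required estimate, and hence exponential tightness.

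The main obstacle is the interplay between the two scales in $\Pb(\Phi_n(K_m^c) > n\epsilon_m)$: one must choose the $\epsilon_m$ small enough to decrease to zero (as required by Proposition~\ref{compact_set_measures}) yet large relative to the $\tilde\epsilon_m$ coming from exponential tightness of $\Lambda_n/n$, so that the conditional Poisson fluctuations above the conditional mean are exponentially negligible at rate $M$; and one must do this uniformly across all $m$ while keeping the union bound under control. This balancing is routine once set up, but it is the crux, and it is precisely the place where the assumption that the $\Lambda_n$ are \emph{finite} (rather than merely locally finite) measures enters — it guarantees that $\Lambda_n(K_m^c) \to 0$ in probability fast enough, and that there is no contribution ``at infinity'' that a Poisson tail bound would fail to see.
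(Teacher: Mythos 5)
Your overall architecture is the same as the paper's: use Proposition~\ref{compact_set_measures} to reduce exponential tightness to bounding $\Pb\bigl(\Phi_n(K_m^c) > n\epsilon_m\bigr)$ for a nested family $(K_m)$ with $\epsilon_m \downarrow 0$, condition on $\Lambda_n$, dominate $\Phi_n(K_m^c)$ by a Poisson variable, and union-bound over $m$. You also correctly locate the crux and the role of finiteness. But the union bound over the \emph{infinitely many} indices $m$ is exactly where your sketch breaks. You only claim $\Pb\bigl(\mathrm{Poi}(n\tilde\epsilon_m) > n\epsilon_m\bigr) \le e^{-cn}$ with $c$ large \emph{uniformly} in $m$; a bound uniform in $m$ does not sum over $m\in\Ns$, and the ``largest term'' principle you invoke is valid only for a number of terms growing subexponentially in $n$, not for an infinite sum with no decay in $m$. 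Indeed, with the natural choices (say $\tilde\epsilon_m=e^{-m}$, $\epsilon_m=\kappa/m$) the Chernoff exponent $\epsilon_m\log(\epsilon_m/\tilde\epsilon_m)-\epsilon_m+\tilde\epsilon_m$ tends to the constant $\kappa$ as $m\to\infty$, so the terms do not decay in $m$ at all. The paper's repair is to split the sum at $m_n\sim\kappa n$: for $m>m_n$ one has $n\epsilon_m<1$, so, because $\Phi_n$ is a counting measure, $\{\Phi_n(K_m^c)>n\epsilon_m\}=\{\Phi_n(K_m^c)\ge 1\}$ and Markov's inequality gives the bound $n e^{-m}$, which is summable in $m$ with tail sum $\lesssim n e^{-\kappa n}$; for $m\le m_n$ there are only $O(n)$ terms, so the uniform Chernoff bound suffices. (Alternatively you could force genuine decay in $m$ by making $\tilde\epsilon_m$ smaller than $\epsilon_m$ by an $m$-dependent factor, e.g.\ $\log(\epsilon_m/\tilde\epsilon_m)\ge (M+m)/\epsilon_m$, but ``comfortably larger'' does not do this.)

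There is a second, smaller uniformity gap on the intensity side: you take a \emph{different} compact set $\tilde K_m$ at level $M+m+1$ for each $m$, and each statement $\limsup_n\frac1n\log\Pb\bigl(\Lambda_n(\tilde K_m^c)>n\tilde\epsilon_m\bigr)\le -(M+m+1)$ only holds for $n\ge N_m$ with $N_m$ depending on $m$, so summing over all $m$ at a fixed $n$ again requires a uniformity you have not established. The paper sidesteps this entirely by taking a \emph{single} compact set $\hat{\mathcal K}_\alpha$ for $\Lambda_n/n$ and embedding it in one set $L_{(K_m,\epsilon_m)}$ via the converse half of Proposition~\ref{compact_set_measures}; then every event $\{\Lambda_n(K_m^c)>n\epsilon_m\}$, $m\in\Ns$, is contained in the single event $\{\Lambda_n/n\notin\hat{\mathcal K}_\alpha\}$, and no union bound over $m$ is needed for the intensity part. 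Both gaps are repairable, but they constitute the actual content of the proof rather than routine bookkeeping.
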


\begin{proof} 
We have to show that for every $\alpha<\infty$, there is a compact $\mathcal{K}_{\alpha} \subseteq \mfinite(E)$ such that 
\begin{equation} \label{eq:exptight_bound}
\limsup_{n\rightarrow\infty}\frac{1}{n}\log\ \Pb \Bigl( \frac{\Phi_n}{n} \in \mathcal{K}_{\alpha}^c \Bigr) <-\alpha.
\end{equation}

By the assumptions of Theorem~\ref{thm:cox_ldp}, the sequence $\Lambda_n/n$ satisfies an LDP in $\mfinite(E)$, 
with \emph{good} rate function $\vareyeone$. Hence, the sequence is exponentially tight, i.e., there is a compact set 
$\mathcal{\hat K}_{\alpha}\subseteq \mfinite(E)$ such that 
\begin{equation} \label{exptight_intensity}
\limsup_{n\rightarrow\infty} \frac{1}{n} \log \Pb \Bigl( \frac{\Lambda_n}{n}\notin \mathcal{\hat K}_{\alpha} \Bigr) <-\alpha.
\end{equation} 
By Proposition~\ref{compact_set_measures}, $\mathcal{\hat K}_{\alpha}$ is contained in a compact set of the form 
$L_{(K_n,\epsilon_n)}$, where $\epsilon_n, n\ge 1$ can be chosen to decrease to zero arbitrarily. We will show that, 
for a suitably chosen sequence $\delta_n \downarrow 0$, the set $L_{(K_n,\delta_n)}$ satisfies the upper bound in 
(\ref{eq:exptight_bound}).

Observe that
\begin{eqnarray}  
\Pb \Bigl( \frac{\Phi_n}{n}\notin L_{(K_i,\delta_i)} \Bigr) 
& \leq & \Pb \Bigl(\frac{\Phi_n}{n}\notin L_{(K_i,\delta_i)} \Bigm| \frac{\Lambda_n}{n} \in 
L_{(K_i,\epsilon_i)} \Bigr) +\Pb \Bigl( \frac{\Lambda_n}{n} \notin L_{(K_i,\epsilon_i)} \Bigr) \nonumber \\
& \leq & \Pb \Bigl( \frac{\Phi_n}{n}\notin L_{(K_i,\delta_i)} \Bigm| \frac{\Lambda_n}{n}\in L_{(K_i,\epsilon_i)} 
\Bigr) + \Pb \Bigl( \frac{\Lambda_n}{n} \notin \mathcal{\hat K}_{\alpha} \Bigr). \label{exptight_decomp1}
\end{eqnarray}

Now, conditional on $\Lambda_n$, $\Phi_n$ is a Poisson point process, and $\Phi_n(K_i^c)$ is a Poisson random 
variable with mean $\Lambda_n(K_i^c)$. Thus, conditional on $\Lambda_n/n \in L_{(K_i, \epsilon_i)}$, the random 
variable $\Phi_n(K_i^c)$ is stochastically dominated by a Poisson random variable with mean $n\epsilon_i$, for each 
$i\in \Ns$. Also, the event $\{ \Phi_n/n \notin L_{(K_i,\delta_i)} \}$ is the union of the events 
$\{ \Phi_n(K_i^c) > n\delta_i \}$ over $i\in \Ns$. Define $m_n = \sup \{ i: n\delta_i > 1 \}$. Since $\Phi_n$ is a 
counting measure, the event $\{ \Phi_n(K_i^c) > n\delta_i \}$ coincides with $\{ \Phi_n(K_i^c) \geq 1 \}$ for $i>m_n$.
Hence, we obtain using the union bound that
\begin{eqnarray} 
&& \Pb \Bigl( \frac{\Phi_n}{n}\notin L_{(K_i, \delta_i)} \Bigm| \frac{\Lambda_n}{n} \in L_{(K_i,\epsilon_i)} \Bigr)
\; \leq \; \sum_{i=0}^{\infty} \Pb \bigl( Poi ( n\epsilon_i )>n\delta_i \bigr) \nonumber \\
&& = \;  \sum_{i=0}^{m_n} \Pb \bigl( Poi ( n\epsilon_i )>n\delta_i \bigr) \; +
\sum_{i=m_n+1}^{\infty}\Pb \bigl( Poi ( n\epsilon_i ) \ge 1 \bigr). \label{exptight_decomp2}
\end{eqnarray}

Without loss of generality, we can take $\epsilon_0 \ge 1$. Take $\epsilon_i = e^{-i}$ and $\delta_i = \kappa/i$ 
for $i\ge 1$, for a constant $\kappa$ to be determined, depending on $\alpha$. Take $\delta_0 = \kappa \epsilon_0$. 
Then $m_n=\lfloor \kappa n \rfloor$, and we obtain using Markov's inequality that
\begin{equation} \label{tailsum_bound}
\sum_{i=m_n+1}^{\infty}\Pb \bigl( Poi ( n\epsilon_i ) \ge 1 \bigr) \le \sum_{i=\lceil \kappa n \rceil}^{\infty} ne^{-i} 
\le \frac{n e^{-\kappa n}}{1-e^{-1}}.
\end{equation}
We also have the large deviations (Chernoff) bound for a Poisson random variable that, for $\mu>\lambda$,
$$
\Pb \bigl( Poi(\lambda)>\mu ) \le \exp \Bigl( -\mu \log \frac{\mu}{\lambda}+\mu-\lambda \Bigr),
$$
from which it follows that
$$
\Pb \bigl( Poi ( n\epsilon_i )>n\delta_i \bigr) \le \begin{cases}
\exp(-n\epsilon_0 (\kappa \log \kappa-\kappa+1)), & i=0, \\
\exp \bigl( -n \kappa \frac{\log \kappa+i-1-\log i}{i} \bigr), & i\ge 1.
\end{cases}
$$
Now, $\epsilon_0 \ge 1$ by assumption and, if $\kappa$ is chosen sufficiently large, then it is easy to verify that
$(\log \kappa+i-1-\log i)/i$ is bigger than $1/2$ for all $i\ge 1$. Hence, we obtain that
\begin{equation} \label{bodysum_bound}
\sum_{i=0}^{m_n} \Pb \bigl( Poi ( n\epsilon_i )>n\delta_i \bigr) \le e^{-n (\kappa \log \kappa-\kappa+1)} 
 + \kappa n e^{-\kappa n/2},
\end{equation}
as $m_n=\lfloor \kappa n \rfloor$. Substituting (\ref{tailsum_bound}) and (\ref{bodysum_bound}) in 
(\ref{exptight_decomp2}), we get 
$$
\Pb \Bigl( \frac{\Phi_n}{n}\notin L_{(K_i, \delta_i)} \Bigm| \frac{\Lambda_n}{n} \in L_{(K_i,\epsilon_i)} \Bigr) 
\le \frac{n e^{-\kappa n}}{1-e^{-1}} + e^{-n (\kappa \log \kappa-\kappa+1)} + \kappa n e^{-\kappa n/2}.
$$ 
It is clear from this that we can choose $\kappa$ sufficiently large to ensure that
\begin{equation} \label{exptight_conditional}
\limsup_{n\to \infty} \frac{1}{n} \log \Pb \Bigl( \frac{\Phi_n}{n}\notin L_{(K_i, \delta_i)} \Bigm| 
\frac{\Lambda_n}{n} \in L_{(K_i,\epsilon_i)} \Bigr)  \le -\alpha.
\end{equation}
Finally, combining (\ref{exptight_intensity}), (\ref{exptight_decomp1}) and (\ref{exptight_conditional}), we conclude that 
$$
\limsup_{n\to \infty} \frac{1}{n} \log \Pb \Bigl( \frac{\Phi_n}{n}\notin L_{(K_i, \delta_i)} \Bigr) \le -\alpha.
$$
This concludes the proof of the lemma. 
\end{proof}

\noindent{\bf Proof of Proposition~\ref{compact_set_measures}.}
The weak topology on the space of finite measures on a Polish space is metrisable~\cite{varadarajan58}), and so it 
suffices to check sequential compactness. Let $(\mu_n, n\in \Ns)$ be a sequence of finite measures on $E$ satisfying 
the assumptions of the proposition with respect to a nested sequence of compact sets $K_n$ whose union is equal to $E$, 
and a sequence $\epsilon_n$ decreasing to zero. In particular, the measures are bounded; $\mu_n(E)\leq \epsilon_0$ 
for all $n\in \Ns$. We want to show that $(\mu_n, n\in \Ns)$ contains a convergent subsequence.

Recall that the space of subprobability measures on a compact set $K$ is compact in the weak topology; this follows 
from the Banach-Alaoglu theorem applied to the unit ball in the space of finite signed measures on $K$, which the 
Riesz representation theorem identifies with the dual of the Banach space $C(K)$ of continuous functions 
on $K$ equipped with the supremum norm. Hence, by Tychonoff's theorem, so is the space of finite measures on $K$ 
bounded by an arbitrary constant $\epsilon_0$.

Thus, the measures $\mu_n$ restricted to $K_1$ all lie within a compact set; hence, there is a subsequence 
$\mu_{11}, \mu_{12}, \ldots$, whose restriction to $K_1$ converges weakly to some $\tilde \mu_1 \in \mfinite(K_1)$.
Similarly, the restriction of this subsequence to $K_2$ all lie within a compact set, and contain a convergent 
subsubsequence $\mu_{21}, \mu_{22},\ldots$. We can extend this reasoning to $K_3$, $K_4$ and so on. 

Formally, denote by $p_n$ the projection from $\mfinite(E)$ to $\mfinite(K_n)$ and by $p_{mn}$ the projection from 
$\mfinite(K_m)$ to $\mfinite(K_n)$ for $m\geq n$. Then, we can rewrite the above as:
$$
p_1 \mu_{1n} \to \tilde \mu_1 \in \mfinite(K_1), \quad p_2 \mu_{2n} \to \tilde \mu_2 \in \mfinite(K_2), \quad, \ldots,
$$
where the convergence is with respect to the weak topology on the corresponding spaces. Now consider the diagonal 
sequence $\mu_{kk}$. It is clear from the above that 
$$
p_n\mu_{kk} \stackrel{k\rightarrow\infty}{\rightarrow} \tilde \mu_n \in \mfinite(K_n),
$$ 
for each $n$. A natural question to ask is whether there is a measure $\tilde \mu\in \mfinite(E)$ such that 
$\tilde \mu_n=p_n \tilde \mu$ for all $n$. The answer follows from a generalisation of Kolmogorov's Extension 
theorem by Yamasaki ~\cite[Proposition 2.1]{yamasaki75}; it is affirmative if the measures $\tilde \mu_n$ 
satisfy the consistency conditions $p_{mn} \tilde \mu_m=\tilde \mu_n$ for all $m>n$. It is straightforward 
to verify these.

We now show that the diagonal subsequence $\mu_{kk}$ converges weakly to the measure $\tilde \mu$ 
(whose existence we have just shown) in the weak topology on $\mfinite(E)$, and moreover that the limit 
$\tilde \mu$ is in $L_{(K_n,\epsilon_n)}$. We start with the latter. As $\tilde \mu$ is a finite measure on the 
Polish space $E$, it is regular; therefore, as $K_n$ are compact sets increasing to $E$, $\tilde \mu(K_n)$ 
increases to $\tilde \mu(E)$. Hence, for any $m\in \Ns$,
$$
\tilde \mu(K_m^c) = \lim_{n\to \infty} \tilde \mu(K_n) - \tilde \mu(K_m).
$$
Now, for any fixed $i>n>m$, $\tilde \mu_i$ is the restriction (or projection) of $\tilde \mu$ to the set $K_i$, 
and so 
$$
\tilde \mu(K_n) - \tilde \mu(K_m) = \tilde \mu_i(K_n) - \tilde \mu_i(K_m) \le \tilde \mu_i(K_m^c) \le \epsilon_m.
$$
The last inequality holds because $\tilde \mu_i$ is the weak limit of measures whose mass on $K_m^c$ is bounded 
by $\epsilon_m$, and $K_m^c$ is an open set. As this holds for each $n$, we conclude on taking limits that 
$\tilde \mu(K_m^c) \le \epsilon_m$. But $m$ was arbitrary, so $\tilde \mu \in L_{(K_n,\epsilon_n)}$.

Next, given $\delta>0$ and a bounded continuous function $g:E\to \Rs$, choose $\ell$ large enough that 
$\epsilon_{\ell} \| g \|_{\infty} < \delta$.  Next, pick $m\geq \ell$ large enough that 
$$
\Bigm| \int_{K_\ell} gd\mu_{\ell n}- \int_{K_\ell} gd\tilde \mu_{\ell} \Bigm| \le \delta \quad \forall \; n\ge m,
$$
which is possible since $\mu_{\ell n}$ converges weakly to $\tilde \mu_{\ell}$ as $n$ tends to infinity.
Now, $\mu_{n\cdot}$ is a subsequence of $\mu_{\ell\cdot}$ for $n\ge \ell$, so the above inequality also 
holds for $\int_{K_{\ell}} g(d\mu_{nn}-d\tilde \mu_{\ell})$ for all $n\ge m$. Thus, we can write
$$
\Bigm| \int_E gd\mu_{nn} - \int_E gd\tilde \mu \Bigm| \le \Bigm| \int_{K_{\ell}} g (d\mu_{nn} - d\tilde \mu_{\ell}) \Bigm| 
+ \Bigm| \int_{K_{\ell}} g (d\tilde \mu_{\ell} -d\tilde \mu) \Bigm| +2 \| g \|_{\infty} \epsilon_{\ell},
$$
as $\mu_{nn}(K_{\ell}^c)$ and $\tilde \mu(K_{\ell}^c)$ are both bounded above by $\epsilon_{\ell}$. We have just 
shown that the first integral above is smaller than $\delta$ in absolute value, for all $n\ge m$. The second integral is zero 
as $\tilde \mu_{\ell}$ is the restriction or projection of $\tilde \mu$ to $K_{\ell}$. The last term is bounded by $2\delta$ 
by the choice of $\ell$. Thus, we have shown that we can choose $m$ in such a way that
\begin{align*} \left|\int_E gd\mu_{nn} - \int_E gd\tilde \mu\right| \le 3\delta
\end{align*} 
for all $n\ge m$. As $g$ was an arbitrary bounded continuous function, this proves that $\mu_{nn}$ converges to $\tilde \mu$. This completes the proof that 
$L_{(K_n,\epsilon_n)}$ is compact.

For the converse, let $\mathcal{K}$ be compact in $\mfinite(E)$ equipped with the weak topology. As the map 
$\mu \mapsto \mu(E)$ is continuous (the indicator of $E$ is a bounded continuous function $E\to \Rs$), its 
supremum over $\mathcal{K}$ is attained. Denote the supremum by $\epsilon_0$. Then $\mu(E)=\mu(K_0^c) 
\le \epsilon_0$ for all $\mu \in \mathcal{K}$. Next, we invoke a generalisation of Prokhorov's theorem by 
Bogachev~\cite[Theorem 8.6.2]{bogachev07}), which states that the measures in a compact set are uniformly tight. 
In other words, given $\epsilon_1>0$, we can find a compact subset $K_1$ of $E$ such that $\mu(K_1^c) \le \epsilon_1$ 
for all $\mu \in \mathcal{K}$. Similarly, we can find compact $K_2$ such that $\mu(K_2^c) \le \epsilon_2$ for all 
$\mu \in \mathcal{K}$. Without loss of generality, we can assume that $K_1 \subseteq K_2$; otherwise, re-define $K_2$ 
as their union. Continuing in the same vein, we obtain a sequence $K_n$ of nested compact sets such that $\mu(K_n^c) 
\le \epsilon_n$ for all $n\in \Ns$, for all $\mu \in \mathcal{K}$. If their union is not equal to $E$, it can be extended countably 
to have this property, by the assumption that $E$ is $\sigma$-compact. Now, $\mathcal{K} \subseteq L_{(K_n,\epsilon_n)}$.
\hfill $\Box$

\section{Proof of LDP for Queue Occupancy and Departures} \label{sec:qldp}

The proof of Theorems~\ref{thm:qldp} and \ref{thm:dep_ldp} are presented in this section. We begin by recalling how the 
queue occupancy measure is related to the input to the queue. First, we represent the input to the $n^{\rm th}$ queue 
as a Cox process on $\Rs \times \Rs_+$ by marking each arrival with its service time; the resulting marked point process 
is a Cox process on $\Rs \times \Rs_+$ with stochastic intensity $\Lambda_n \otimes F$. Now, $Q_n(t)$ is equal to the 
number of points of this Cox process lying in the triangle 
\begin{align*}
A_t=\left\{(s,x)\in\mathbb{R}\times\mathbb{R}_+:s\leq t,x\geq t-s\right\}. 
\end{align*}
Furthermore, the queue length process $\left\{Q_n(t),t\in [a,b]\right\}$, is determined by the restriction of the above 
Cox process to the wedge
\begin{align*}
A_{[a,b]}:=\bigcup\limits_{t\in[a,b]}A_t, 
\end{align*}
as illustrated in Figure~\ref{truncatedwedge}. Next, for $u\leq s\leq t$, we will also need to define the truncated sets
\begin{align*}
A_t^u=\left\{(s,x)\in\mathbb{R}\times\mathbb{R}_+:u\leq s\leq t,x\geq t-s\right\}, \quad
A^u_{[s,t]}:=\bigcup\limits_{x\in[s,t]}A^u_x. 
\end{align*}
Finally, recall that we are interested in the occupancy measure $L_n$, which is defined as the random measure that is absolutely 
continuous with respect to Lebesgue measure, and has density $Q_n(\cdot)$.

\begin{figure}[htb]
\includegraphics[scale=0.3]{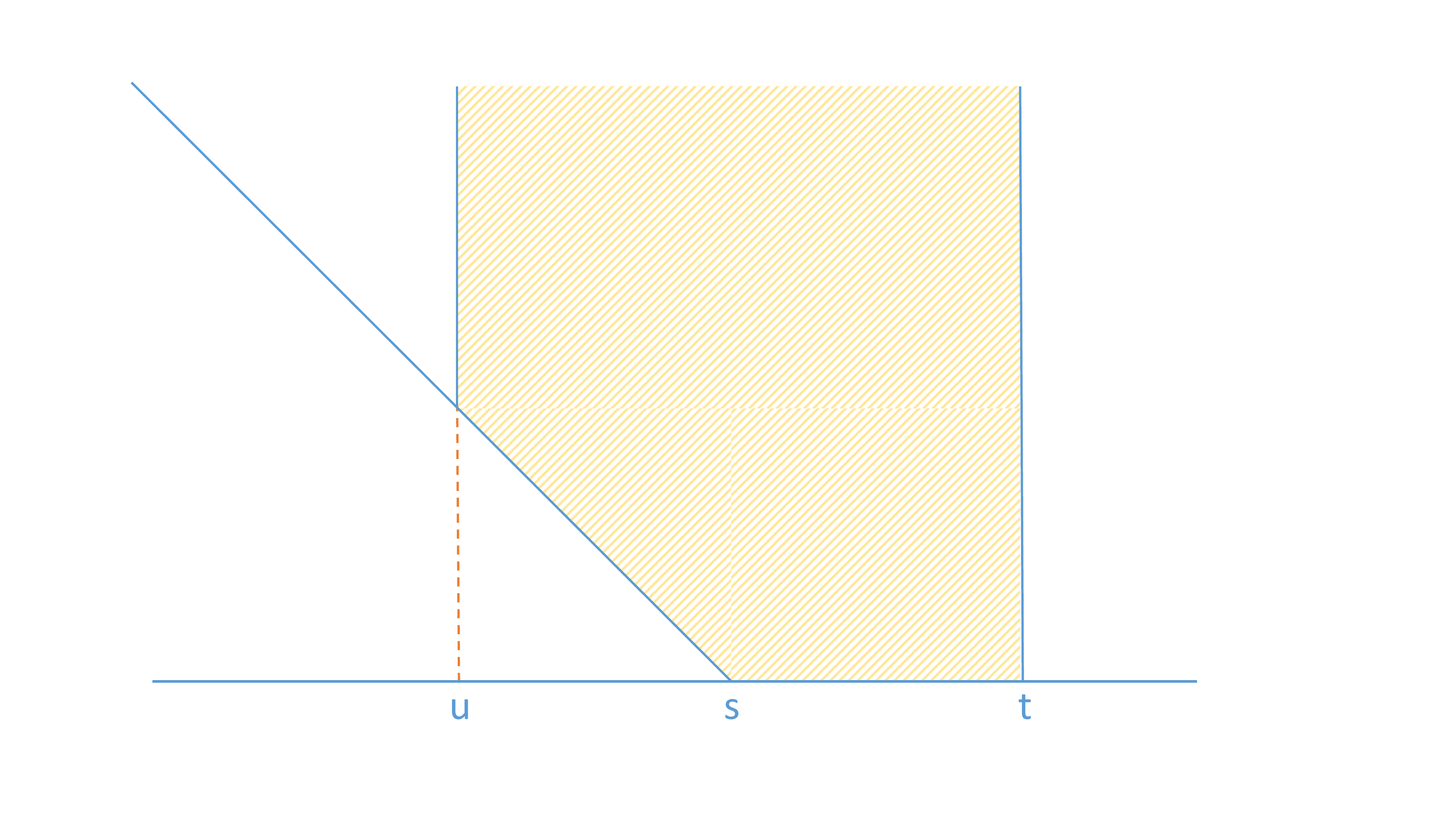}
\caption[Truncated Wedge]{The wedge $A_{[s,t]}$ and the truncated wedge $A_{[s,t]}^u$.}
\label{truncatedwedge}
\end{figure}

Our goal is to prove an LDP for $L_n$, restricted to an arbitrary interval $[a,b]$. We start by establishing an LDP for 
the scaled directing measures $\frac{\Lambda_n}{n}\otimes F$, restricted to a truncated wedge $A^u_{[a,b]}$, for 
arbitrary $u<a$; we define below a new topology, which we call the tempered topology, in which we establish this LDP. 
Then, using the projective limit approach described below, we extend this family of LDPs to an LDP on the full wedge 
$A_{[a,b]}$, in the projective limit topology. However, the queueing map is not continuous in this topology, so we need to 
strengthen the LDP to the weak topology on the full wedge. We do this by establishing exponential tightness of the measures 
$\frac{\Lambda_n}{n}\otimes F$ in the tempered topology on $A_{[a,b]}$. Next, we invoke Theorem~\ref{thm:cox_ldp} 
to deduce an LDP for the Cox process on $A_{[a,b]}$ with this intensity. Finally, we use continuity of the queueing map 
with respect to the weak topology, and the contraction principle, to obtain the LDP for $L_n$. Checking that $L_n$ also 
satisfies Assumptions [A1]-[A3] is fairly straightforward. The details of all these steps are presented below.

\begin{defin}\label{tempered}\normalfont
Let $u\le a<b \in \Rs$, and let $\mfinite(A^u_{[a,b]})$ denote the space of finite measures on the truncated wedge 
$A^u_{[a,b]}$ defined above. The tempered topology on this space is the weakest topology which makes the maps 
$\mu \mapsto \int fd\mu$ continuous for all bounded, continuous functions $f:A^u_{[a,b]} \to Rs$ which vanish at 
the boundary of $A^u_{[a,b]}$.

The tempered topology on $\mfinite(A_{[a,b]})$ is defined analogously.
\end{defin}

Notice that the tempered topology is weaker than the weak topology, as it is restricted to test functions that vanish 
at the boundary. We are now ready to state our first result.

\begin{lemma} \label{lem:ldp_trunc_wedge}
Fix $u\le a < b \in \Rs$ and consider the truncated wedge $A^u_{[a,b]}$. The sequence of random measures 
$\left. \frac{\Lambda_n}{n}\otimes F \right|_{A^u_{[a,b]}}$,  $n\in \Ns$, satisfy an LDP on $\mfinite(A^u_{[a,b]})$ 
equipped with the tempered topology, with good rate function 
$$
\eyecox^u_{[a,b]}(\mu)= \inf \left\{ \eyelamb_{[a,b]}(\lambda): \lambda \in \mfinite([a,b]),\; 
\mu= (\lambda \otimes F)\bigm|_{A^u_{[a,b]}} \right\} .
$$
\end{lemma}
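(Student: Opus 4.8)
I would prove Lemma~\ref{lem:ldp_trunc_wedge} by transporting the LDP of Assumption [A2] through a suitable continuous map and invoking the contraction principle. Every point $(s,x)$ of the truncated wedge $A^u_{[a,b]}$ has its time-coordinate $s$ in the compact interval $[u,b]$ — indeed $A^u_{[a,b]}=\{(s,x)\in\Rs\times\Rs_+: u\le s\le b,\ x\ge (a-s)^+\}$ — so $\bigl(\frac{\Lambda_n}{n}\otimes F\bigr)\big|_{A^u_{[a,b]}}$ depends on $\Lambda_n$ only through its restriction to $[u,b]$, and by Assumption [A2] applied to that interval, $(\Lambda_n/n)|_{[u,b]}$ satisfies an LDP on $\mfinite([u,b])$ in the weak topology with good rate function $\eyelamb_{[u,b]}$. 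Consider the map
$$
T:\mfinite([u,b])\longrightarrow\mfinite(A^u_{[a,b]}),\qquad T(\lambda)=(\lambda\otimes F)\big|_{A^u_{[a,b]}},
$$
which is well defined into \emph{finite} measures since the total mass of $T(\lambda)$ is at most $\lambda([u,b])<\infty$, and which satisfies $\bigl(\frac{\Lambda_n}{n}\otimes F\bigr)\big|_{A^u_{[a,b]}}=T\bigl((\Lambda_n/n)|_{[u,b]}\bigr)$. Thus, once $T$ is shown to be continuous from the weak topology on the domain to the tempered topology on the range, the contraction principle \cite[Theorem 4.2.1]{dembo98} yields the asserted LDP, with rate function $\eyecox^u_{[a,b]}(\mu)=\inf\{\eyelamb_{[u,b]}(\lambda):T(\lambda)=\mu\}$, which is automatically good, being the image of a good rate function under a continuous map.

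The heart of the matter — and the reason the tempered topology is introduced in place of the weak topology — is the continuity of $T$; note that $T$ is \emph{not} continuous into the weak topology in general, because the indicator of $A^u_{[a,b]}$ is discontinuous along the slanted edge $\{x=a-s\}$, so mass of $\lambda$ accumulating near that edge can fall off the wedge discontinuously (this would be seen, e.g., by a test function on the wedge whose zero-extension is discontinuous there when $F$ has an atom at the corresponding level). To prove continuity into the tempered topology, fix a bounded continuous $g:A^u_{[a,b]}\to\Rs$ vanishing on $\partial A^u_{[a,b]}$, and let $\tilde g$ be its extension by zero to $\Rs\times\Rs_+$. The vanishing-at-the-boundary hypothesis is exactly what makes $\tilde g$ continuous and bounded on all of $\Rs\times\Rs_+$ ($A^u_{[a,b]}$ is closed, $g$ is continuous on it and zero on its boundary, and $\tilde g$ is zero off it). Then, by Fubini's theorem,
$$
\int g\,dT(\lambda)=\int_{\Rs\times\Rs_+}\tilde g\,d(\lambda\otimes F)=\int_{[u,b]}h_g\,d\lambda,\qquad h_g(s):=\int_{\Rs_+}\tilde g(s,x)\,dF(x),
$$
and dominated convergence (with the $F$-integrable dominating constant $\|g\|_\infty$, using that $\tilde g(s_k,\cdot)\to\tilde g(s,\cdot)$ pointwise whenever $s_k\to s$) shows that $h_g$ is bounded and continuous on $[u,b]$. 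Hence $\lambda\mapsto\int g\,dT(\lambda)$ is weakly continuous for every admissible $g$, which is precisely continuity of $T$ into the tempered topology, since the latter is by definition the weakest topology making the evaluations $\mu\mapsto\int g\,d\mu$ continuous.

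Given this, the lemma follows from the contraction principle as outlined in the first paragraph. The one further remark is that, although the tempered topology on $\mfinite(A^u_{[a,b]})$ need not be Hausdorff (functions vanishing on the boundary cannot separate distinct point masses sitting on it), this is harmless: the transfer of the large deviations upper and lower bounds through $T^{-1}$ of closed and open sets, and the compactness of the images under $T$ of the level sets of $\eyelamb_{[u,b]}$, require no Hausdorffness, so the statement and proof of the contraction principle apply verbatim. I expect the continuity of $T$ — equivalently, the continuity of $h_g$ together with the observation that the zero-extension $\tilde g$ is genuinely continuous on $\Rs\times\Rs_+$ — to be the only real obstacle; everything else is a direct invocation of [A2] and the contraction principle.
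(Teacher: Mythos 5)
Your proposal is correct and follows essentially the same route as the paper: the paper factors your map $T$ as a composition of $\lambda\mapsto\lambda\otimes F$ (shown weakly continuous via uniform continuity of the test function on a compact set plus a tail bound on $F$) with the restriction to the wedge (shown continuous into the tempered topology by exactly your extension-by-zero argument), and then invokes the contraction principle from the LDP for $(\Lambda_n/n)|_{[u,b]}$. Your single-step continuity proof via dominated convergence is a harmless streamlining, and your identification of the contracted rate function as an infimum over $\mfinite([u,b])$ matches what the paper's own proof actually delivers.
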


\begin{proof} 
Define the map 
\begin{align*} T:\mfinite([u,b])\rightarrow\mfinite([u,b]\times\mathbb{R}_+) 
\end{align*}
by $T(\mu)=\mu\otimes F$. We first show that this map is continuous in the weak topology. As the weak topology is 
metrisable, we can check continuity along sequences. To this end, consider a sequence of finite measures $\mu_n$ 
on $[u,b]$ converging weakly to a finite measure $\mu$, and let $g:[u,b] \times \Rs_+ \to \Rs$ be bounded and 
continuous. Define $h:[u,b]\mapsto \Rs$ by $h(x)=\int_0^{\infty} g(x,y)dF(y)$. We have
$$
\int_{[u,b]\times \Rs_+} gd(T(\mu_n)) = \int_u^b \Bigl( \int_0^{\infty} g(x,y)dF(y) \Bigr) d\mu_n(x) = \int_u^b h(x)d\mu_n(x),
$$
where the first equality follows from Fubini's theorem.
If we can show that $h$ is continuous, then it will follow that $\int gd(T(\mu_n))$ converges to $\int gd(T(\mu))$, and, as $g$ was
an arbitrary bounded continuous function, that $T(\mu_n)$ converges weakly to $T(\mu)$, thus proving that $T$ is continuous.

Now, to show that $h$ is continuous, fix $\epsilon>0$ and $x_0\in \Rs$ such that $1-F(x_0)\le \epsilon$. Now $g$ is uniformly 
continuous on the compact set $[u,b]\times [0,x_0]$, so we can find $\delta>0$ such that $|g(x,z)-g(y,z)|<\epsilon$ provided 
$|x-y|<\delta$. It follows that 
\begin{eqnarray*}
&& |h(x)-h(y)| \\
&&\leq \int_0^{x_0} | g(x,z)-g(y,z) | dF(z) + \int_{x_0}^{\infty} |g(x,z)| dF(z) + \int_{x_0}^{\infty} |g(y,z)| dF(z) \\
&& \leq (1+2\| g\|_{\infty}) \epsilon.
\end{eqnarray*}
This proves the continuity of $h$, and consequently of $T$.

Next, let $S$ be the map that restricts finite measures on $[u,b]\times \Rs_+$ to the wedge $A^u_{[a,b]}$. Equip 
$\mfinite([u,b]\times \Rs_+)$ with the weak topology, and $\mfinite(A^u_{[a,b]})$ with the tempered topology. 
It is easy to see that $S$ is continuous. Indeed, let $\mu_n, n\in \Ns$ be a sequence of finite measures on 
$[u,b]\times \Rs_+$ converging weakly to a finite measure $\mu$ on $[u,b]\times \Rs_+$, and let $f$ be a 
bounded, continuous function on $A^u_{[a,b]}$, vanishing on its boundary. Extend it to a bounded, continuous 
function ${\hat f}:[u,b]\times \Rs_+ \to \Rs$ by defining ${\hat f} \equiv f$ on $A^u_{[a,b]}$ and ${\hat f} 
\equiv 0$ on the complement of $A^u_{[a,b]}$ in $[u,b]\times \Rs_+$. Then,
$$
\int_{A^u_{[a,b]}} fd(S(\mu_n)) = \int_{[u,b]\times \Rs_+} {\hat f}d\mu_n \rightarrow 
\int_{[u,b]\times \Rs_+} {\hat f}d\mu = \int_{A^u_{[a,b]}} fd(S(\mu)),
$$
where the convergence holds by the assumption that $\mu_n$ converge weakly to $\mu$. This proves that $S$ is continuous.
As $S$ and $T$ are both continuous, so is the composition $S\circ T$. The claim of the lemma now follows from the assumed 
LDP for $\frac{\Lambda_n}{n} \bigm|_{[u,b]}$ and the contraction principle~\cite[Theorem 4.2.1]{dembo98}.
\end{proof}

The family of LDPs on the truncated wedges $\{ A^u_{[a,b]}, u<a \}$ can be extended to an LDP on the full wedge 
$A_{[a,b]}$ using the Dawson-G\"artner theorem for projective limits~\cite[Theorem 4.6.1]{dembo98}. This yields an LDP 
in the projective limit topology, which is generated by bounded continuous functions supported on the truncated wedges 
$A^u_{[a,b]}$ and vanishing at their boundaries. In order to strengthen this LDP to the weak topology on $A_{[a,b]}$, 
we need to show exponential tightness of the measures $\frac{\Lambda_n}{n} \otimes F$ in the weak topology. The 
following lemma is a key ingredient in establishing this.

\begin{lemma} \label{cxorder} 
Suppose $X,X_1,X_2,...$ are identically distributed random variables with arbitrary joint distribution, and suppose 
$\alpha_i$, $i\in \Ns$ are non-negative coefficients whose sum is finite, and which we denote by $\alpha$. Then, 
$$
\sum\limits_{i=1}^{\infty} \alpha_i X_i \leq_{\rm cx} \alpha X, 
$$
where we write $Y\leq_{\rm cx} Z$ to denote that $Y$ is dominated by $Z$ in the convex stochastic order, i.e., 
$\E[\phi(Y)] \le \E[\phi(Z)]$ for all convex functions $\phi$ for which the expectations are defined, possibly infinite.
\end{lemma}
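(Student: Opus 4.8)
The statement is a classical fact about convex stochastic order (sometimes attributed to the theory of martingales / Jensen-type inequalities for mixtures), and the plan is to reduce it to Jensen's inequality applied to the appropriate mixture. First I would dispose of the trivial case $\alpha = 0$, where both sides vanish a.s. Assuming $\alpha > 0$, set $p_i = \alpha_i/\alpha$, so that $(p_i)_{i\in\Ns}$ is a probability vector. The key observation is that we may write, for any convex $\phi$,
$$
\phi\Bigl( \sum_{i=1}^{\infty} \alpha_i X_i \Bigr) = \phi\Bigl( \sum_{i=1}^{\infty} p_i (\alpha X_i) \Bigr) \le \sum_{i=1}^{\infty} p_i \, \phi(\alpha X_i),
$$
where the inequality is Jensen's inequality for the convex function $\phi$ applied to the convex combination $\sum_i p_i (\alpha X_i)$. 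Taking expectations and using $\E[\phi(\alpha X_i)] = \E[\phi(\alpha X)]$ (since the $X_i$ are identically distributed as $X$), the right-hand side equals $\sum_i p_i \, \E[\phi(\alpha X)] = \E[\phi(\alpha X)]$, which is exactly the desired inequality $\E[\phi(\sum_i \alpha_i X_i)] \le \E[\phi(\alpha X)]$.

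The main technical obstacle — the step that is not merely "routine" — is justifying the interchange of limits and handling integrability/measurability carefully, since the sum $\sum_i \alpha_i X_i$ is an infinite sum and $\phi$ need not be bounded or nonnegative. I would handle this by first proving the finite case: for $\phi$ convex, $\phi(\sum_{i=1}^m \alpha_i X_i + (\alpha - \sum_{i=1}^m\alpha_i) X_{m+1})$, or more cleanly, work with $S_m = \sum_{i=1}^m \alpha_i X_i + \beta_m X$ where $\beta_m = \alpha - \sum_{i=1}^m \alpha_i$, so that $S_m$ is always a convex combination (with weights summing to $\alpha$) of identically distributed random variables, giving $\E[\phi(S_m/\alpha \cdot \alpha)] \le \E[\phi(\alpha X)]$ for each $m$ by the finite Jensen argument above. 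Then $S_m \to \sum_{i=1}^\infty \alpha_i X_i$ a.s. as $m \to \infty$ (the tail $\sum_{i>m}\alpha_i \to 0$ and $\beta_m \to 0$; a.s. convergence of the partial sums follows since $\E[\sum_i \alpha_i |X_i|] = \alpha \E|X| $ may be infinite, so one should instead note that $\sum_i \alpha_i X_i$ converges a.s. whenever it is asserted to exist, or restrict to the event where it does). Passing to the limit then requires either Fatou's lemma (if $\phi \ge 0$, or $\phi$ bounded below, which one may assume after subtracting an affine function — any convex $\phi$ dominates an affine function, and affine functions are handled by the identical-distribution hypothesis directly) or a uniform integrability argument.

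Concretely, I would split a general convex $\phi$ as $\phi = \phi_0 + L$ where $L$ is affine (a supporting line) and $\phi_0 = \phi - L \ge 0$ is convex and nonnegative. For the affine part, $\E[L(\sum_i \alpha_i X_i)] = \E[L(\alpha X)]$ holds with equality by linearity and identical distributions (assuming the relevant first moments exist; otherwise the inequality is interpreted as $+\infty \le +\infty$ or is vacuous as stated in the lemma — "possibly infinite"). For the nonnegative convex part $\phi_0$, Jensen gives $\phi_0(S_m) \le \frac{1}{\alpha}\sum_{i=1}^m \alpha_i \phi_0(\alpha X_i) + \frac{\beta_m}{\alpha}\phi_0(\alpha X)$, and applying Fatou to the left-hand side as $m\to\infty$ together with monotone convergence / direct summation on the right yields $\E[\phi_0(\sum_i \alpha_i X_i)] \le \E[\phi_0(\alpha X)]$. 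Adding the two parts back gives the claim. The only genuine subtlety is the convergence $S_m \to \sum_i \alpha_i X_i$; this is where I would be most careful, observing that since the $\alpha_i$ are summable and the $X_i$ are identically distributed, $\sum_i \alpha_i X_i$ converges absolutely a.s. on the event $\{\sum_i \alpha_i |X_i| < \infty\}$, which has probability one whenever $\E|X| < \infty$, and otherwise the statement is read in the extended-value sense consistent with the lemma's phrasing.
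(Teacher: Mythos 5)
Your argument is correct, and its core (the countable Jensen inequality with normalised weights $p_i=\alpha_i/\alpha$, followed by an interchange of expectation and summation using identical distributions) is the same as the paper's. Where you genuinely diverge is in how the non-routine step -- handling a convex $\phi$ that is neither bounded nor of one sign -- is dealt with. The paper truncates the \emph{function}: it applies Jensen pointwise to the full infinite sum, notes that Tonelli justifies the interchange when $\phi$ is bounded below, replaces $\phi$ by $\phi_c=\max\{c,\phi\}$, and then lets $c\downarrow-\infty$ via monotone convergence on the positive/negative parts. You instead truncate the \emph{sum}: you pad the partial sum with $\beta_m X$ so the weights always total $\alpha$, split $\phi=\phi_0+L$ with $L$ a supporting affine function and $\phi_0\ge 0$, handle $L$ exactly by linearity and $\phi_0$ by Fatou as $S_m\to\sum_i\alpha_iX_i$ a.s. Both routes work; the paper's avoids invoking a.s.\ convergence of $S_m$ and the continuity of $\phi$ (it needs only the pointwise countable Jensen inequality), while yours makes the role of the affine part explicit and confines the limiting argument to a nonnegative integrand, at the cost of the (correctly flagged) convergence discussion for $S_m$ and of some care with possible $\infty-\infty$ when recombining $\phi_0$ and $L$ if $\E|X|=\infty$ -- an edge case the paper's own proof also leaves implicit, and which is harmless in the only application (Lemma~\ref{lem:complement_bd}), where everything is nonnegative.
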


\begin{proof} 
By scaling the random variables, we assume $\alpha=1$ without loss of generality. By Jensen's inequality, the inequality
$$
\phi \Bigl( \sum_{i=1}^{\infty} \alpha_i X_i(\omega) \Bigr) \le \sum_{i=1}^{\infty} \alpha_i \phi(X_i(\omega)), 
$$
holds pointwise on the probability space $\Omega$. Taking expectations on both sides yields the result if we can 
interchange expectation and summation on the right. We can certainly do so (by Tonelli's theorem) 
if the functions $\phi$ are non-negative, and hence also if they are bounded below. Now, for any $c\in \Rs$, 
the function $\phi_c$ defined by $\phi_c(x) = \max \{c,\phi(x) \}$ is convex and bounded below, so we get
$$
\E \Bigl[ \phi_c \Bigl( \sum_{i=1}^{\infty} \alpha_i X_i \Bigr) \Bigr] \le 
\sum_{i=1}^{\infty} \alpha_i \E \Bigl[ \phi_c (X_i) \Bigr] = \Bigl( \sum_{i=1}^{\infty} \alpha_i \Bigr) \E [\phi_c(X) ], 
$$ 
as the $X_i$ are identically distributed with the same law as $X$. Since $\phi \leq \phi_c$, it follows that 
$$
\E \Bigl[ \phi \Bigl( \sum_{i=1}^{\infty} \alpha_i X_i \Bigr) \Bigr] \le \Bigl( \sum_{i=1}^{\infty} \alpha_i \Bigr) \E [\phi_c(X) ], 
$$ 
for all $c\in \Rs$. Letting $c$ decrease to $-\infty$ on the right now yields the claim of the lemma. This can be justified 
by splitting $\phi$ into its positive and negative parts, and using the Montone Convergence Theorem.
\end{proof}

We are now ready to show that the directing measures restricted to a wedge are exponentially tight in the weak topology. 

\begin{prop} \label{etlem} The sequence of random measures 
\begin{align*}
\left(\left.\left(\frac{\Lambda_n}{n}\otimes F\right)\right|_{A_{[a,b]}}\right)_{n\in\mathbb{N}} 
\end{align*}
is exponentially tight in the weak topology.
\end{prop}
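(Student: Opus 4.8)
The plan is to deduce exponential tightness in the weak topology on $\mfinite(A_{[a,b]})$ from the exponential tightness of the directing measures $\Lambda_n/n$ on $[a,b]$ (which follows from Assumption [A2] and goodness of $\eyelamb_{[a,b]}$), by controlling the tail mass $(\Lambda_n/n \otimes F)(A_{[a,b]} \cap \{x > x_0\})$ uniformly in $n$. Concretely, using Proposition~\ref{compact_set_measures}, it suffices to produce, for each $\alpha<\infty$, an $\epsilon_0>0$ and a nested sequence of compact sets $C_1 \subseteq C_2 \subseteq \ldots$ exhausting $A_{[a,b]}$, and a sequence $\delta_m \downarrow 0$, such that
\begin{align*}
\limsup_{n\to\infty} \frac{1}{n}\log \Pb\Bigl( \bigl(\tfrac{\Lambda_n}{n}\otimes F\bigr)\big|_{A_{[a,b]}} \notin L_{(C_m,\delta_m)} \Bigr) < -\alpha.
\end{align*}
Since $A_{[a,b]} = \{(s,x): a - x \le s \le b,\ s \le b,\ x \ge 0\}$ is already contained in the horizontal strip $s \in [\,?\,]$ only after truncating $x$, the natural exhausting sets are $C_m = A_{[a,b]} \cap ([a-m, b] \times [0,m])$, and the complement $C_m^c$ within $A_{[a,b]}$ is contained in $\{x > m\} \cup \{s < a - m,\ x > m\}$, i.e.\ essentially the region where the service mark exceeds $m$. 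The key point is that $(\Lambda_n/n \otimes F)(C_m^c) \le (\Lambda_n/n)([a-m,b]) \cdot \overline{F}(m)$ is the only obstruction, plus controlling the total mass.

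The main technical work, and the reason Lemma~\ref{cxorder} is invoked, is bounding the total mass $(\Lambda_n/n)([a-m,b])$ uniformly in $m$ and exponentially in $n$. Using translation invariance (Assumption [A1]), the mass of $\Lambda_n/n$ over $[a-m,b]$ decomposes as a sum of $O(m)$ identically distributed contributions $\Lambda_n([k,k+1])/n$ over unit intervals. Writing the tail mass $(\Lambda_n/n \otimes F)(A_{[a,b]} \cap \{x > x_0\})$ as $\sum_k \frac{\Lambda_n([k,k+1])}{n}\, \overline{F}(\text{something} \approx x_0 + \text{dist})$, Assumption [A4] ($\int_0^\infty \overline{F} < \infty$) makes the coefficients summable; Lemma~\ref{cxorder} then gives $\sum_i \alpha_i X_i \le_{\rm cx} \alpha X$ with $X$ distributed as $\Lambda_n([0,1])/n$ and $\alpha = \sum_i \alpha_i$ small once the truncation level $x_0$ is large. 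Combining convex order with the exponential moment bound of Assumption [A3] (applied to the convex function $\theta \mapsto e^{n\theta x}$, so that $\E e^{n\theta \sum \alpha_i X_i} \le \E e^{n\theta \alpha X} = e^{\psi_n(n\theta\alpha)}$ and then $\tfrac{1}{n}\log$ of this is bounded uniformly in $n$ for $\theta\alpha$ in the neighbourhood of $0$), a Chernoff bound yields
\begin{align*}
\limsup_{n\to\infty} \frac{1}{n}\log \Pb\Bigl( \bigl(\tfrac{\Lambda_n}{n}\otimes F\bigr)(A_{[a,b]} \cap \{x>x_0\}) > \delta \Bigr) \le -\alpha
\end{align*}
provided $x_0$ is chosen large (shrinking $\alpha := \sum\alpha_i$) and then $\delta$ appropriately. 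The same argument bounds the total mass itself (take $x_0 = 0$, so $\alpha \approx (b-a+1)\cdot$const), giving the constant $\epsilon_0$.

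Assembling these pieces: fix $\alpha$, choose the truncation levels $m_1 < m_2 < \ldots$ and thresholds $\delta_m \downarrow 0$ so that the Chernoff bound above holds at level $-\alpha - \log 2$ (say) for each, apply a union bound over $m$ — which is harmless at the exponential scale since the terms decay geometrically in $m$ as well — and conclude that the complement of $L_{(C_m,\delta_m)}$ has the required superexponentially small probability. By Proposition~\ref{compact_set_measures}, $L_{(C_m,\delta_m)}$ is weakly compact, which gives exponential tightness in the weak topology. I expect the main obstacle to be the bookkeeping in the decomposition of the tail mass over unit intervals so that the coefficients $\alpha_i$ are genuinely summable and the sum $\alpha$ can be made small — i.e.\ correctly relating the geometry of the wedge $A_{[a,b]}$ to the tail $\overline{F}$ — and then checking that Assumption [A3]'s uniform bound is being applied on a $\theta$-neighbourhood that remains valid after the scaling by $\alpha$; everything else is a routine Chernoff/union-bound argument.
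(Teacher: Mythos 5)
Your proposal is correct and follows essentially the same route as the paper: reduce to the explicit compact sets of Proposition~\ref{compact_set_measures}, split the complement mass over unit-width strips, use translation invariance and Lemma~\ref{cxorder} to dominate the resulting weighted sum by $c\,\Lambda_n([0,1])$ with $c$ made small via [A4], and finish with a Chernoff bound from [A3] (rescaling $\theta$ by $1/c$) plus a union bound. One intermediate line is misstated --- the inequality $(\Lambda_n/n\otimes F)(C_m^c)\le (\Lambda_n/n)([a-m,b])\,\overline{F}(m)$ ignores the part of the wedge with $s<a-m$, over which $\Lambda_n$ lives on an unbounded interval --- but your subsequent strip decomposition over all $k$, with coefficients decaying like $\overline{F}$ of the distance into the wedge, is exactly what handles that region, so the argument is unaffected.
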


\begin{proof} 
We have to show that for every $0<\alpha<\infty$, there is a compact set $\mathcal{K}_{\alpha} \subseteq 
\mfinite(A_{[a,b]})$ such that 
\begin{equation} \label{eq:exptight_bound1}
\limsup_{n\rightarrow\infty}\frac{1}{n}\log\ \Pb \left( \left.\left(\frac{\Lambda_n}{n}\otimes F\right)\right|_{A_{[a,b]}} \in \mathcal{K}_{\alpha}^c \right) <-\alpha.
\end{equation}
We will use the explicit construction of a weakly compact set of measures given in Proposition~\ref{compact_set_measures}. 
We seek a nested sequence of compact sets $K_1 \subseteq K_2 \subseteq \ldots \subseteq A_{[a,b]}$, whose union is the 
wedge $A_{[a,b]}$, and a sequence of positive constants $\epsilon_0 \ge \epsilon_1 \ge \ldots$ decreasing to zero, such that
\begin{equation} \label{eq:exptight_bound2}
\Pb \left( \Bigl( \frac{\Lambda_n}{n}\otimes F \Bigr) \Bigl( K_i^c \Bigr) >\epsilon_i\right) \leq e^{-n(i+1)\alpha} \quad 
\forall \; i\ge 0,
\end{equation}
where we define $K_0$ to be the empty set. If we can find such $K_i$ and $\epsilon_i$, then the weakly compact set of 
measures 
$$
\mathcal{K}_{\alpha} = \Bigl\{ \mu \in \mfinite(A_{[a,b]}): \mu(K_i^c) \le \epsilon_i \, \forall \, i\in \Ns \Bigr\},
$$ 
satisfies the inequality in (\ref{eq:exptight_bound1}), thus proving the proposition.

\begin{figure}[htb]
\includegraphics[scale=0.3]{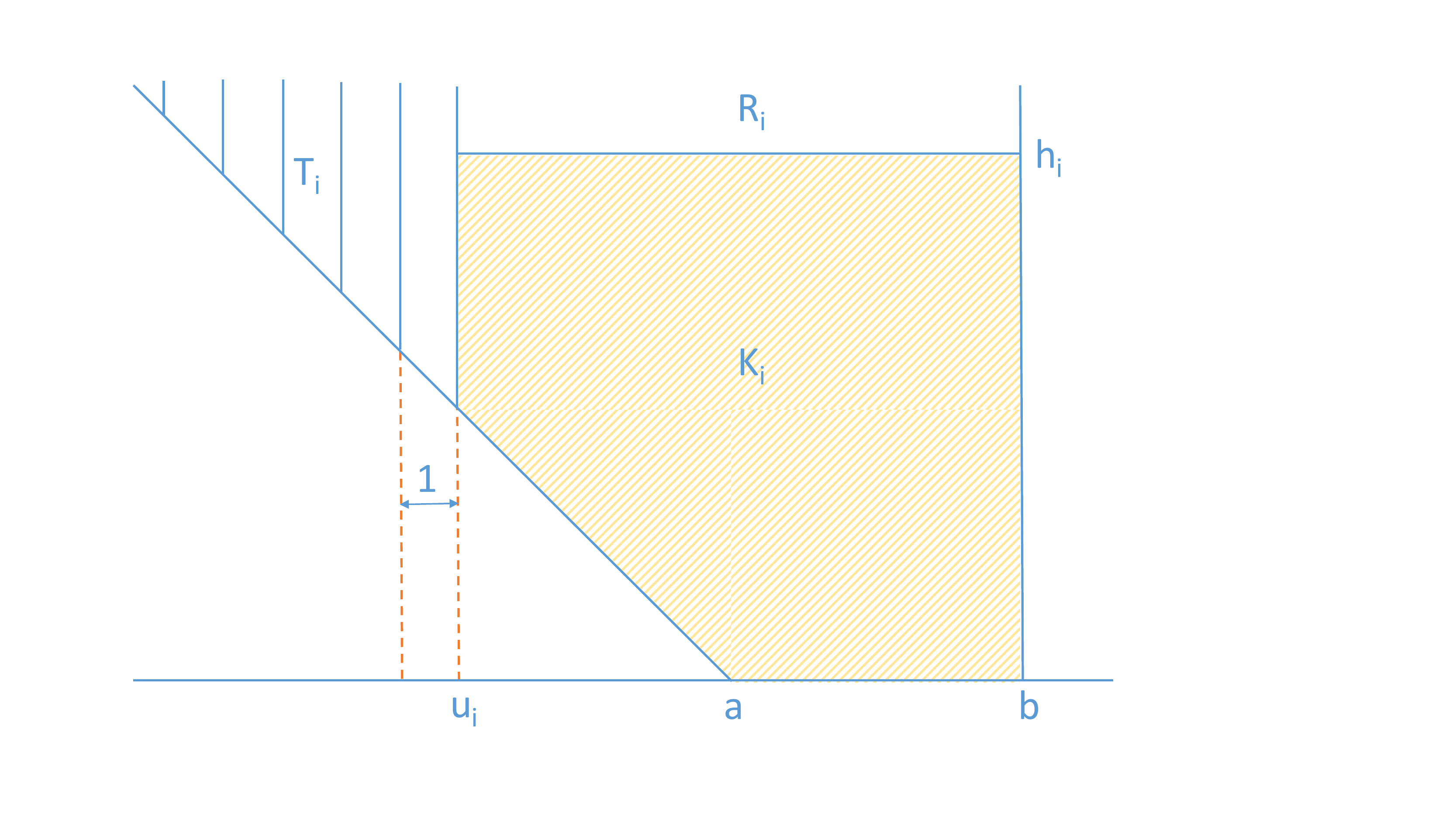}
\caption[Truncated Wedge With Strips]{The wedge $A_{[a,b]}$ split into a compact set $K_i$, infinite rectangle ${R_i}$ and infinite triangle ${T_i}$. The triangle is split into strips of unit width.}
\label{wedgewithstrips}
\end{figure}

Each of the compact sets $K_i$, $i\ge 1$, will be specified by two real numbers $u_i$ and $h_i$ as shown in 
Figure~\ref{wedgewithstrips}: 
\begin{align*} K_i = \{ [u_i,b] \times [0,h_i] \} \bigcap A_{[a,b]}. 
\end{align*}
We shall write $K_i^c$ to denote the complement of $K_i$ in $A_{[a,b]}$, and we decompose this set into a triangle 
\begin{align*} T_i = \left\{(s,x)\in\mathbb{R}\times\mathbb{R}_+:s\leq u_i,x\geq a-s\right\}, 
\end{align*}
and a rectangle 
\begin{align*} R_i = \left\{(s,x)\in\mathbb{R}\times\mathbb{R}_+:u_i\leq s\leq b,x\geq h_i\right\}; 
\end{align*}
see Figure~\ref{wedgewithstrips}. 
Thus, we have
\begin{equation} \label{eq:prob_complement}
\frac{1}{n} (\Lambda_n \otimes F)(K_i^c) = \frac{1}{n} (\Lambda_n \otimes F)(T_i) + \frac{1}{n} (\Lambda_n \otimes F)(R_i).
\end{equation}
Now, by the translation invariance of $\Lambda_n$, we have
$$
(\Lambda_n \otimes F)(T_i) \eqdistrib (\Lambda_n \otimes F)(T^{a-u_i}) \mbox{ and } 
(\Lambda_n \otimes F)(R_i) \eqdistrib \bigl( \Lambda_n \otimes F \bigr) \bigl( R^{h_i}_{b-u_i} \bigr)),
$$
where $\eqdistrib$ denotes equality in distribution, and the sets $T^{\ell}$ and $R^h_z$ are defined as
\begin{equation} \label{def_complement}
\begin{split}
T^{\ell} &= \{ (t,x)\in \Rs \times \Rs_+: t \le 0, t+x \ge \ell \} \\ 
R^{h}_{z} &= \{ (t,x)\in \Rs \times \Rs_+: t \in [0,z], x \ge h \}.
\end{split}
\end{equation}
Thus, we obtain from (\ref{eq:prob_complement}) that
\begin{eqnarray} 
\Pb \left( \Bigl( \frac{\Lambda_n}{n}\otimes F \Bigr) \Bigl( K_i^c \Bigr)>\epsilon_i\right) &\leq& 
\Pb \left( \bigl( \Lambda_n\otimes F \bigr) \bigl( T^{a-u_i} \bigr) >\frac{n\epsilon_i}{2} \right) \nonumber \\ 
&& + \Pb \left( \bigl( \Lambda_n \otimes F \bigr) \bigl( R^{h_i}_{b-u_i} \bigr) >\frac{n\epsilon_i}{2} \right). 
\label{eq:exptight_bound3}
\end{eqnarray}

We show in Lemma~\ref{lem:complement_bd} that, given $i\in \Ns$, $\epsilon_i>0$ and $\alpha>0$, we can choose 
$u_i$ to make $a-u_i$ sufficiently large that
$$
\Pb \Bigl( \bigl( \Lambda_n\otimes F \bigr) \bigl( T^{a-u_i} \bigr) > \frac{n\epsilon_i}{2} \Bigr) \le e^{-n(i+1)\alpha}, 
\quad \forall n\in \Ns;
$$
to see this, take $\epsilon=\epsilon_i/2$ and $\beta=(i+1)\alpha$ in the statement of the lemma. Next, by the same lemma, 
given $u_i$, and hence $b-u_i$, we can choose $h_i$ sufficiently large to ensure that 
$$
\Pb \Bigl( \bigl( \Lambda_n\otimes F \bigr) \bigl( R^{h_i}_{b-u_i} \bigr) > \frac{n\epsilon_i}{2}  \Bigr) \le e^{-n(i+1)\alpha}, 
\quad \forall n\in \Ns.
$$
Combining these two inequalities with (\ref{eq:exptight_bound3}), we conclude that for all $i\ge 1$, 
\begin{equation} \label{complement_prob_bd}
\Pb \Bigl( \bigl( \Lambda_n\otimes F \bigr) \bigl( K_i^c \bigr) > n\epsilon_i \Bigr) \le 2e^{-n(i+1)\alpha}, 
\quad \forall n\in \Ns,
\end{equation}
which is essentially the same as~(\ref{eq:exptight_bound2}). That leaves the case $i=0$.

The same argument does not work for $K_0$ as we cannot choose this set; $K_0$ is the empty set and 
$K_0^c = A_{[a,b]}$. Instead, we need to show that we can choose $\epsilon_0$ sufficiently large that 
\begin{equation} \label{total_mass_bd}
\Pb \Bigl( \bigl( \Lambda_n\otimes F \bigr) \bigl( A_{[a,b]} \bigr) > n\epsilon_0 \Bigr) \le e^{-n\alpha}, 
\quad \forall n\in \Ns.
\end{equation}
We first note that $A_{[a,b]} \subset T_0 \cup \{ [a-\ell, b]\times \Rs_+ \}$, where 
$$
T_0= \{ (t,x) \in \Rs \times \Rs_+ : t \le a-\ell, t+x\ge a \}.
$$
Hence 
\begin{align*} (\Lambda_n \otimes F)(A_{[a,b]}) \le (\Lambda_n \otimes F)(T_0) + \Lambda_n([a-\ell, b]).
\end{align*} 
Moreover, by translation invariance of $\Lambda_n$, we have 
\begin{align*} (\Lambda_n \otimes F)(T_0) \eqdistrib (\Lambda_n \otimes F)(T^{\ell}),
\end{align*} 
where $T^{\ell}$ is defined in~(\ref{def_complement}). Using Lemma~\ref{lem:complement_bd} below, we conclude that we 
can choose $\ell$ sufficiently large that 
\begin{equation} \label{t_ell_bd}
\Pb \Bigl( \bigl( \Lambda_n\otimes F \bigr) \bigl( T_0 \bigr) >n \Bigr) \le e^{-n\alpha}, \quad \forall n\in \Ns.
\end{equation}
We also see from the proof of Lemma~\ref{lem:complement_bd} that $\Lambda_n([a-\ell, b])$ is dominated, in the 
increasing convex order, by $\lceil \ell+b-a \rceil \Lambda_n([0,1])$; in particular,
$$
\E \left[ e^{\theta \Lambda_n([a-\ell, b])} \right] \leq \E \left[e^{\theta(\ell+1+b-a) \Lambda_n([0,1])} \right]
= \exp \Bigl( \psi_n \bigl( n\theta (\ell+1+b-a) \bigr) \Bigr),
$$
where $\psi_n$ is defined in Assumption [A3]. By [A3], for given $a,b,\ell$, $\psi_n(n\theta(\ell+1+b-a))/n$ is bounded, 
for $\theta$ in a neighbourhood of the origin, uniformly in $n$, i.e., there exist constants $\theta, \delta>0$ such that 
$\psi_n(n\theta) \le n\delta$ for all $n\in \Ns$. Consequently, by Markov's inequality,
$$
\Pb \left( \Lambda_n([a-\ell, b]) \geq n(\epsilon_0-1) \right) \leq e^{-n\theta (\epsilon_0-1)+n\delta}, \quad \forall n\in \Ns.
$$
Clearly, we can choose $\epsilon_0$ large enough to ensure that
$$
\Pb \left( \Lambda_n([a-\ell, b]) \geq n(\epsilon_0-1) \right) \leq e^{-n\alpha}, \quad \forall n\in \Ns.
$$
Combining the above equation with (\ref{t_ell_bd}), we see that the inequality in (\ref{total_mass_bd}) holds, up to a 
factor of two. This completes the proof that the inequality in (\ref{eq:exptight_bound2}) holds for all $i\ge 0$, up to a 
factor of two on the RHS. Now, using the union bound over $i$, we get 
$$
\Pb \left( \exists \, i\ge 0 : \Bigl( \frac{\Lambda_n}{n} \otimes F \Bigr) \Bigl( K_i^c \Bigr) > \epsilon_i \right) 
\leq \sum_{i=0}^{\infty} e^{-n(i+1)\alpha} \leq 2e^{-n\alpha},
$$
from which (\ref{eq:exptight_bound1}) is immediate, given the definition of $\mathcal{K}_{\alpha}$. This completes 
the proof of the proposition.
\end{proof}

\begin{lemma} \label{lem:complement_bd}
Let $\beta>0$ be a given constant. For $\ell, h, z>0$, let the triangle $T_{\ell}$ and the rectangle $R^h_z$  be defined 
as in (\ref{def_complement}). Then, we have the following:
\begin{enumerate}
\item
Given $\epsilon>0$, we can choose $\ell$ sufficiently large that
$$
\Pb \Bigl( \bigl( \Lambda_n\otimes F \bigr)\bigl( T^{\ell} \bigr) > n\epsilon \Bigr) \le e^{-n\beta}, \quad \forall n\in \Ns.
$$
\item
Given $z>0$ and $\epsilon>0$, we can choose $h$ sufficiently large that
$$
\Pb \Bigl( \bigl( \Lambda_n\otimes F \bigr) \bigl( R^h_{z} \bigr) > n\epsilon \Bigr) \le e^{-n\beta}, \quad \forall n\in \Ns.
$$
\end{enumerate}
\end{lemma}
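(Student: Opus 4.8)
Both statements follow from a single scheme. In each case we slice the unbounded region $B$ (either $T^{\ell}$ or $R^h_z$) into vertical fibres to express its $(\Lambda_n\otimes F)$-mass as (a bound by) a sum $\sum_i c_i\,\Lambda_n(I_i)$, where the $I_i$ are unit intervals and the coefficients $c_i\ge 0$ have finite sum $\rho$ that tends to $0$ as the geometric parameter ($\ell$, resp.\ $h$) is increased. By translation invariance of the law of $\Lambda_n$ (Assumption [A1]) the variables $\Lambda_n(I_i)$ are identically distributed, with the common law of $\Lambda_n([0,1])$, so Lemma~\ref{cxorder} gives $\sum_i c_i\,\Lambda_n(I_i)\leq_{\rm cx}\rho\,\Lambda_n([0,1])$. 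Applying this convex-order bound to the convex function $x\mapsto e^{\theta x}$ ($\theta>0$), together with the pointwise domination, yields
\[
\E\bigl[e^{\theta(\Lambda_n\otimes F)(B)}\bigr]\ \le\ \E\bigl[e^{\theta\rho\,\Lambda_n([0,1])}\bigr]\ =\ e^{\psi_n(n\theta\rho)},
\]
with $\psi_n$ as in Assumption [A3]. A Chernoff bound, with [A3] used to control the right-hand side once $\theta\rho$ is small, then finishes the proof.

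\textbf{The two slicings.} For part~1, the fibre of $T^{\ell}$ over $t\le 0$ is $[\ell-t,\infty)$, so $(\Lambda_n\otimes F)(T^{\ell})=\int_{(-\infty,0]}F([\ell-t,\infty))\,\Lambda_n(dt)$. Cutting $(-\infty,0]$ into the unit intervals $I_k=(-(k+1),-k]$, $k\ge 0$, and noting that $t\in I_k$ forces $\ell-t\ge\ell+k$, the fibre mass over $I_k$ is at most $\overline F(\ell+k-1)$; hence $(\Lambda_n\otimes F)(T^{\ell})\le\sum_{k\ge 0}\overline F(\ell+k-1)\,\Lambda_n(I_k)$. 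Since $\overline F$ is non-increasing and $\int_0^{\infty}\overline F(x)\,dx<\infty$ by Assumption [A4], the coefficient sum $\rho_\ell:=\sum_{k\ge 0}\overline F(\ell+k-1)$ is finite and $\rho_\ell\to 0$ as $\ell\to\infty$. For part~2, the fibre of $R^h_z$ over $t\in[0,z]$ is $[h,\infty)$, so $(\Lambda_n\otimes F)(R^h_z)=c_h\,\Lambda_n([0,z])$ with $c_h:=F([h,\infty))\to 0$ as $h\to\infty$; covering $[0,z]$ by the $\lceil z\rceil$ unit intervals $[j,j+1]$, $0\le j<\lceil z\rceil$, gives $(\Lambda_n\otimes F)(R^h_z)\le c_h\sum_j\Lambda_n([j,j+1])$, so here the coefficient sum is $\rho=\lceil z\rceil\,c_h\to 0$ as $h\to\infty$.

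\textbf{Conclusion.} Fix the given $\epsilon,\beta>0$ (and $z>0$ for part~2). By Assumption [A3] there are $\theta_0>0$ and $\delta<\infty$ with $\psi_n(n\theta)\le n\delta$ for all $n\in\Ns$ and all $|\theta|\le\theta_0$. Put $\theta:=(\beta+\delta)/\epsilon$, and then choose $\ell$ (resp.\ $h$) large enough that $\theta\rho\le\theta_0$, where $\rho=\rho_\ell$ (resp.\ $\rho=\lceil z\rceil c_h$); this is possible because $\rho\to 0$. Writing $Y_n$ for $(\Lambda_n\otimes F)(T^{\ell})$ (resp.\ $(\Lambda_n\otimes F)(R^h_z)$), the scheme above gives $\E[e^{\theta Y_n}]\le e^{\psi_n(n\theta\rho)}\le e^{n\delta}$, so by Markov's inequality
\[
\Pb(Y_n>n\epsilon)\ \le\ e^{-n\theta\epsilon}\,\E[e^{\theta Y_n}]\ \le\ e^{-n(\theta\epsilon-\delta)}\ =\ e^{-n\beta}\qquad\text{for every }n\in\Ns .
\]

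\textbf{Main obstacle.} The delicate point is the order of the choices in the last step: the neighbourhood $[-\theta_0,\theta_0]$ and the constant $\delta$ from [A3] are fixed once and for all, so $\theta$—which may be forced to be large, depending on $\beta$, $\epsilon$, $\delta$—must be chosen before $\ell$ or $h$. This is legitimate precisely because the effective exponent $\theta\rho$ can be driven into the fixed neighbourhood of $0$ by enlarging the geometric parameter, and that is exactly where finiteness of the mean service time [A4] enters for $T^{\ell}$ (it guarantees the tail $\rho_\ell\to 0$) and where finiteness of $F$ enters for $R^h_z$. The remaining details—checking the fibre geometry, summing the coefficients, and noting that possible atoms of $F$ are harmless since only tail sums of $\overline F$ appear—are routine.
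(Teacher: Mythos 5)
Your proof is correct and follows essentially the same route as the paper's: slice the region into unit-width vertical strips, bound the mass by $\sum_i c_i\Lambda_n(I_i)$ with identically distributed summands (translation invariance) and small coefficient sum, apply Lemma~\ref{cxorder} to get increasing-convex-order domination by $\rho\,\Lambda_n([0,1])$, and finish with a Chernoff bound using [A3] and [A4]. Your reparametrisation (fixing the Chernoff exponent $\theta$ first and then driving $\theta\rho$ into the [A3] neighbourhood by enlarging $\ell$ or $h$) is exactly the substitution $\theta\mapsto\theta/c_\ell$ made in the paper, and you correctly identify this ordering of choices as the one delicate point.
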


\begin{proof} 
Fix an $\ell\in\Rs$. By splitting the triangle $T^{\ell}$ into vertical strips of unit width, we see that 
$$
\bigl( \Lambda_n \otimes F \bigr) \bigl( T^{\ell} \bigr) 
\le \sum_{k=0}^{\infty} \Lambda_n \bigl( [-k-1,-k] \bigr) \overline{F}(\ell+k).
$$
Now, by translation invariance of $\Lambda_n$, the random variables $\Lambda_n \bigl( [-k-1,-k] \bigr)$ are identically 
distributed for all $k$. Moreover, the sum of the coefficients $\overline{F}(\ell+k)$ can be bounded as follows:
$$
\sum_{k=0}^{\infty} \overline{F}(\ell+k) \leq c_{\ell} := \int_{\ell-1}^{\infty} \overline{F}(x)dx = \E [S\mathbf{1}(S\geq \ell-1)],
$$
where $S$ denotes a random variable with the distribution $F$ of the service time, and $\mathbf{1}(E)$ denotes the indicator 
of the event $E$. This last expectation is finite by the assumption that the service time has finite mean. Hence, invoking 
Lemma~\ref{cxorder}, we obtain that
$$
\bigl( \Lambda_n \otimes F \bigr) \bigl( T^{\ell} \bigr) \le_{icx} c_{\ell} \Lambda_n \bigl( [0,1] \bigr),
$$
where, for random variables $X$ and $Y$, we say that $X$ is dominated by $Y$ in the increasing convex order, written 
$X\le_{icx} Y$, if $\E[\phi(X)] \le \E[\phi(Y)]$ for all increasing convex functions $\phi$. Applying this bound to the 
increasing convex function $\phi(x)=e^{\theta x}$ for arbitrary $\theta>0$, and using Markov's inequality, we get, 
for any $\epsilon>0$,
$$
\Pb \Bigl( \bigl( \Lambda_n \otimes F \bigr) \bigl( T^{\ell} \bigr) \geq \frac{n\epsilon}{2} \Bigr) 
\leq e^{-n\theta \epsilon/2} \E \left[ e^{\theta c_{\ell} \Lambda_n([0,1])} \right] 
= \exp \Bigl( -\frac{n\theta \epsilon}{2} + \psi_n \bigl( n\theta c_{\ell} \bigr) \Bigr),
$$
where the function $\psi_n$ was defined in Assumption [A3]. As $\theta>0$ is arbitrary, it is convenient to rewrite the 
above inequality, replacing $\theta$ by $\theta/c_{\ell}$, as
\begin{equation} \label{eq:triangle_prob_bd}
\log \Pb \Bigl( \bigl( \Lambda_n \otimes F \bigr) \bigl( T^{\ell} \bigr) \geq \frac{n\epsilon}{2} \Bigr) \leq
-\frac{n\theta \epsilon}{2c_{\ell}} + \psi_n (n\theta), \mbox{ where } c_{\ell} = \E [S\mathbf{1}(S\geq \ell-1)].
\end{equation}
Now, by Assumption [A3], there exist positive constants $\delta$ and $\theta$ such that $\psi_n(n\theta) \le n\delta$, 
uniformly in $n$. Morever, as $\E[S]$ is finite by Assumption [A4], it follows that $c_{\ell}$ tends to zero as $\ell$ tends 
to infinity. Hence, we see from (\ref{eq:triangle_prob_bd}) that, given $i\in \Ns$ and $\beta, \epsilon>0$, we can 
choose $\ell$ sufficiently large, and consequently $c_{\ell}$ sufficiently small, to ensure that
\begin{equation} \label{eq:triangle_prob_bd2}
\Pb \left( \bigl( \Lambda_n \otimes F \bigr) \bigl( T^{\ell} \bigr) \geq n\epsilon \right) \leq e^{-n\beta} \quad \forall \; n\in \Ns.
\end{equation}
This completes the proof of the first claim of the lemma.

The proof of the second claim is very similar. We show that 
$$
\bigl( \Lambda_n \otimes F \bigr) \bigl( R^h_{b-a} \bigr) \le_{icx} \lceil b-a \rceil \overline{F}(h) \Lambda_n ([0,1]),
$$
and apply Markov's inequality to the exponential of the random variable on the RHS. The details are omitted. 
\end{proof}

We now have all the ingredients required to establish an LDP for the scaled intensity measures $(\Lambda_n \otimes F)/n$, 
on the wedge $A_{[a,b]}$.
\begin{prop} \label{intldp} 
Suppose that $\Lambda_n, n\in \Ns$ is a sequence of random measures satisfying Assumptions [A1]-[A3] and $F$ satisfies [A4]. 
Fix an interval $[a,b] \subset \Rs$. The sequence of random measures 
$\left.\left(\frac{\Lambda_n}{n}\otimes F\right)\right|_{A_{[a,b]}}$, $n\in \Ns$, satisfy an LDP on $\mfinite(A_{[a,b]})$ 
equipped with the weak topology, with good rate function 
$$
\eyecox_{[a,b]}(\nu) = \sup_{u\le a} \eyecox^u_{[a,b]} \bigl( \nu \bigm|_{A^u_{[a,b]}} \bigr), \quad \nu \in \mfinite([a,b]).
$$
\end{prop}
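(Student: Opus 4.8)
The strategy is to lift the family of LDPs on the truncated wedges (Lemma~\ref{lem:ldp_trunc_wedge}) to an LDP on the full wedge by a projective-limit argument, and then to upgrade the topology from the projective-limit (tempered) topology to the weak topology using the exponential tightness of Proposition~\ref{etlem}. I would not try to obtain the weak-topology LDP directly from Lemma~\ref{lem:ldp_trunc_wedge}, because the restriction map $\mfinite(A_{[a,b]})\to\mfinite(A^u_{[a,b]})$ is not weakly continuous at the truncation edge $\{s=u\}$; it is, however, continuous for the tempered topologies, which is precisely what the projective-limit machinery needs.

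First I would set up the projective system. Fix the cofinal sequence $u_j=a-j$, $j\in\Ns$, put $\mathcal X_j=\mfinite(A^{u_j}_{[a,b]})$ with the tempered topology, and for $i\le j$ let $p_{ij}\colon\mathcal X_j\to\mathcal X_i$ be restriction of a measure to the smaller wedge $A^{u_i}_{[a,b]}$. The key verification is that $p_{ij}$ is tempered-continuous: given a bounded continuous $h$ on $A^{u_i}_{[a,b]}$ that vanishes on $\partial A^{u_i}_{[a,b]}$, its extension by zero to $A^{u_j}_{[a,b]}$ is bounded, is continuous across the segment $\{s=u_i\}$ precisely because $h$ vanishes there, and vanishes on $\partial A^{u_j}_{[a,b]}$; hence $\mu\mapsto\int h\,d(p_{ij}\mu)$ is one of the functionals defining the tempered topology on $\mathcal X_j$. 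The projective limit $\mathcal X_\infty:=\varprojlim_j\mathcal X_j$ consists of consistent families of finite measures on the $A^{u_j}_{[a,b]}$, and $\mfinite(A_{[a,b]})$ embeds into it via $\iota(\nu)=(\nu|_{A^{u_j}_{[a,b]}})_j$; this $\iota$ is injective (a finite measure on $A_{[a,b]}=\bigcup_j A^{u_j}_{[a,b]}$ is determined by its restrictions, as $A^{u_j}_{[a,b]}\uparrow A_{[a,b]}$) and is continuous when $\mfinite(A_{[a,b]})$ carries the weak topology.

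Next I would apply the Dawson--G\"artner theorem. By Lemma~\ref{lem:ldp_trunc_wedge}, the $j$-th coordinate of $\iota\bigl((\Lambda_n/n\otimes F)|_{A_{[a,b]}}\bigr)$, namely $(\Lambda_n/n\otimes F)|_{A^{u_j}_{[a,b]}}$, satisfies an LDP on $\mathcal X_j$ with good rate function $\eyecox^{u_j}_{[a,b]}$; so \cite[Theorem 4.6.1]{dembo98} gives an LDP for $\iota\bigl((\Lambda_n/n\otimes F)|_{A_{[a,b]}}\bigr)$ on $\mathcal X_\infty$ with good rate function $(\mu_j)_j\mapsto\sup_j\eyecox^{u_j}_{[a,b]}(\mu_j)$. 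By Proposition~\ref{etlem}, $(\Lambda_n/n\otimes F)|_{A_{[a,b]}}$ is exponentially tight in $\mfinite(A_{[a,b]})$ with the weak topology and is a.s. finite (its total mass on $A_{[a,b]}$ being controlled in that proof). Applying the inverse contraction principle \cite[Theorem 4.2.4]{dembo98} to the continuous injection $\iota$, together with this exponential tightness and the Dawson--G\"artner LDP, yields an LDP for $(\Lambda_n/n\otimes F)|_{A_{[a,b]}}$ on $\mfinite(A_{[a,b]})$ with the weak topology, with good rate function $\nu\mapsto\sup_j\eyecox^{u_j}_{[a,b]}(\nu|_{A^{u_j}_{[a,b]}})$. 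Since enlarging the wedge (decreasing $u$) shrinks the set of $\lambda$ feasible in the infimum defining $\eyecox^u_{[a,b]}$, the map $u\mapsto\eyecox^u_{[a,b]}(\nu|_{A^u_{[a,b]}})$ is nonincreasing in $u$, so its supremum over $\{a-j:j\in\Ns\}$ equals $\sup_{u\le a}\eyecox^u_{[a,b]}(\nu|_{A^u_{[a,b]}})=\eyecox_{[a,b]}(\nu)$, the claimed rate function. Goodness of $\eyecox_{[a,b]}$ follows from the inverse contraction principle, or from \cite[Lemma 1.2.18]{dembo98} and the exponential tightness.

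I expect the main obstacle to lie in the first step: checking that zero-extension of a boundary-vanishing test function stays continuous and boundary-vanishing on the larger wedge, so that the restriction maps $p_{ij}$ are tempered-continuous. This is the technical reason the tempered topology, rather than the weak one, is the natural carrier of the projective-limit step. A second point requiring care is that $\mathcal X_\infty$ is strictly larger than $\mfinite(A_{[a,b]})$ (mass can escape toward the tip $s\to-\infty$ of the wedge), so the descent from $\mathcal X_\infty$ back to $\mfinite(A_{[a,b]})$ genuinely needs the inverse contraction principle combined with weak-topology exponential tightness, rather than an identification of the two spaces.
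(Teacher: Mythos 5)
Your proposal is correct and follows essentially the same route as the paper: Dawson--G\"artner applied to the projective system of truncated wedges in the tempered topology (with restriction maps shown tempered-continuous via zero-extension of boundary-vanishing test functions), followed by the weak-topology exponential tightness of Proposition~\ref{etlem} to upgrade the LDP to the weak topology. Your extra care in distinguishing $\mathcal X_\infty$ from $\mfinite(A_{[a,b]})$ and invoking the inverse contraction principle is a slightly more scrupulous version of the paper's identification of the two spaces followed by \cite[Corollary 4.2.6]{dembo98}, but it is the same argument in substance.
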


\begin{proof} 
We will use the Dawson-G\"artner theorem~\cite[Theorem 4.6.1]{dembo98} for projective limits. Letting 
\begin{align*} J:=\left\{ A^u_{[a,b]}: u\in(-\infty,a)\right\}, 
\end{align*}
it is clear that the collection $(J,\subseteq)$ of truncated wedges $A^u_{[a,b]}$ equipped with set inclusion is totally ordered, 
and hence also right-filtering. The set is indexed by $u$, and we will use $u$ to denote the element $A^u_{[a,b]}$, to simplify 
notation. Denote by $\mathcal{Y}_u$ the space $\mfinite(A^u_{[a,b]})$ of finite measures on $A^u_{[a,b]}$, equipped with 
the tempered topology.

If $t\le u$, i.e., $A^u_{[a,b]} \subseteq A^t_{[a,b]}$ (note that the order in the projective system reverses 
inequalities from the order on the real line), define the projection $p_{ut}: \mathcal{Y}_t \to \mathcal{Y}_u$ by the 
restriction of a measure on $A^t_{[a,b]}$ to the subset $A^u_{[a,b]}$. It is clear that this map is continuous in 
the tempered topology, since any bounded, continuous function on $A^u_{[a,b]}$, vanishing on its boundary, can be 
extended to a bounded, continuous function on $A^t_{[a,b]}$, vanishing on its boundary, by setting it to zero outside 
$A^u_{[a,b]}$. Moreover, the projections satisfy the consistency condition $p_{us}=p_{ut}\circ p_{ts}$ for $s\le t\le u$. 
Thus, $(\mathcal{Y}_u, p_{ut})_{t\le u}$ constitute a projective system. We can identify $\mfinite(A_{[a,b]})$ 
with the projective limit, with canonical projections 
\begin{align*} p_u: \mfinite(A_{[a,b]})\to \mfinite(A^u_{[a,b]}) 
\end{align*}
defined as the restriction of a measure from the full wedge $A_{[a,b]}$ to its truncation $A^u_{[a,b]}$. These are 
clearly continuous in the tempered topology, by the same argument as above. 

Now, by Lemma~\ref{lem:ldp_trunc_wedge}, the projections 
$$
\left. \left( \frac{\Lambda_n}{n}\otimes F \right) \right|_{A^u_{[a,b]}} = p_u \left( \left.  \left( \frac{\Lambda_n}{n}\otimes F 
\right)  \right|_{A_{[a,b]}} \right), \; n\in \Ns,
$$
satisfy an LDP for each $u\in (\infty,a)$, with rate function $\eyecox^u_{[a,b]}$. Hence, by the Dawson-G\"artner 
theorem, the sequence of measures $\left. \left( \frac{\Lambda_n}{n}\otimes F \right) \right|_{A_{[a,b]}}$, $n\in \Ns$, 
satisfies an LDP in the projective limit topology, with good rate function
$$
\eyecox_{[a,b]}(\nu) = \sup_{u\le a} \eyecox^u_{[a,b]} \bigl( \nu \bigm|_{A^u_{[a,b]}} \bigr), \quad \nu \in \mfinite([a,b]).
$$
Moreover, by Proposition~\ref{etlem}, the measures $\left. \left( \frac{\Lambda_n}{n}\otimes F \right) \right|_{A_{[a,b]}}$ 
are exponentially tight in the weak topology on $\mfinite(A_{[a,b]})$. Hence, by \cite[Corollary 4.2.6]{dembo98}, we obtain that 
the LDP holds in the weak topology. Exponential tightness also implies goodness of the rate function~\cite[Lemma 1.2.18]{dembo98}.
\end{proof}

Next, we show the continuity of the queueing map, which is the prelude to obtaining the LDP for the queue occupancy measure. 
For a measure $\nu \in \mfinite(A_{[a,b]})$, and $t\in [a,b]$, we define $Q^{\nu}(t)=\nu(A_t)$, where we recall that 
$A_t=A_{[t,t]}$ is the set 
\begin{align*} \{ (s,x)\in \Rs \times \Rs_+: s\le t, s+x\ge t \}. 
\end{align*}
The interpretation is that, if $\nu$ is a counting measure representing the marked arrival process into an infinite-server queue, 
where each arrival is marked with its service time, then $Q^{\nu}(t)$ denotes the number of customers in the queue at 
time $t$. Let $L(\nu)$ denote the measure on $[a,b]$ which is absolutely continuous with respect to Lebesgue measure, 
and has density $Q^{\nu}(\cdot)$; let $L$ denote the map from $\mfinite(A_{[a,b]})$ to $\mfinite([a,b])$ which takes 
$\nu$ to $L(\nu)$.

We want an explicit characterisation of the map $L$. We will describe $L(\nu)$ through its action on the dual space 
$C_b([a,b])$ of bounded, continuous functions on $[a,b]$, i.e., by specifying $\int_a^b g(t)dL(\nu)(t)$ for all 
$g\in C_b([a,b])$. By the Riesz representation theorem, $L(\nu)$ is uniquely determined by these integrals. From the 
description above, we have
\begin{eqnarray} 
\int_a^b g(t)dL(\nu)(t) &=& \int_a^b g(t) Q^{\nu}(t) dt =\int_{t=a}^b g(t) \nu(A_t) dt \nonumber \\
&=& \int_{A_{[a,b]}} \Bigl( \int_{\max\{a,s\}}^{\min\{s+x,b\}} g(t)dt \Bigr) \nu(ds\times dx). \label{eq:qdual}
\end{eqnarray}
The last equality is obtained by interchanging the order of integration, noting that an area element at $ds\times dx$ 
contributes to $\nu(A_t)$ for each $t$ between $\max \{a,s\}$ and $\min \{s+x,b\}$.

\begin{lemma}\label{weaklycontmap} The map $L:\mfinite(A_{[a,b]}) \to \mfinite{([a,b])}$, defined by (\ref{eq:qdual}) 
via the Riesz representation theorem, is continuous with respect to the weak topology on each of these sets.
\end{lemma}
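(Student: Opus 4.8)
The plan is to exploit metrisability of the weak topology to reduce to sequential continuity, and then to recognise the dual action (\ref{eq:qdual}) of $L$ as integration against a fixed bounded continuous kernel. So suppose $\nu_n \to \nu$ weakly in $\mfinite(A_{[a,b]})$; I must show that $\int_a^b g\, dL(\nu_n) \to \int_a^b g\, dL(\nu)$ for every $g \in C_b([a,b])$. Fix such a $g$ and define $G:A_{[a,b]} \to \Rs$ by $G(s,x) = \int_{\max\{a,s\}}^{\min\{s+x,b\}} g(t)\, dt$, so that (\ref{eq:qdual}) reads $\int_a^b g\, dL(\mu) = \int_{A_{[a,b]}} G\, d\mu$ for every $\mu \in \mfinite(A_{[a,b]})$. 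The crux is then to show that $G \in C_b(A_{[a,b]})$; granted this, weak convergence $\nu_n \to \nu$ immediately gives $\int G\, d\nu_n \to \int G\, d\nu$, i.e. $\int g\, dL(\nu_n) \to \int g\, dL(\nu)$, and since $g$ was arbitrary (and $L(\nu)$ is a genuine finite non-negative measure, with total mass $\int_a^b \nu(A_t)\,dt \le (b-a)\,\nu(A_{[a,b]}) < \infty$), this establishes $L(\nu_n) \to L(\nu)$ weakly.

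To verify $G \in C_b(A_{[a,b]})$, I would first note that $A_{[a,b]} = \{(s,x) \in \Rs \times \Rs_+ : s \le b,\ s+x \ge a\}$, a closed subset of $\Rs \times \Rs_+$, on which $\max\{a,s\} \le \min\{s+x,b\}$; hence the defining integral is over a (possibly degenerate) interval of length at most $b-a$, giving the bound $\|G\|_\infty \le (b-a)\,\|g\|_\infty$. For continuity, set $\Gamma(t) = \int_a^t g(u)\, du$, which is Lipschitz on $[a,b]$ with constant $\|g\|_\infty$. The maps $(s,x) \mapsto \max\{a,s\}$ and $(s,x) \mapsto \min\{s+x,b\}$ are continuous from $A_{[a,b]}$ into $[a,b]$, and on $A_{[a,b]}$ one has $G(s,x) = \Gamma(\min\{s+x,b\}) - \Gamma(\max\{a,s\})$, so $G$ is continuous as a composition of continuous maps, including along the edges $\{s = b\}$ and $\{s+x = a\}$ of the wedge.

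I do not anticipate a genuine obstacle: once (\ref{eq:qdual}) is available, the lemma is essentially the observation that $L$ acts by integration against the bounded continuous kernel $G$, and everything then follows from the definition of weak convergence. The only points that require a little care are the explicit description of the wedge $A_{[a,b]}$ (needed to see that the lower limit of integration never exceeds the upper limit, and that $G$ stays uniformly bounded) and the continuity of $G$ up to the boundary of $A_{[a,b]}$, both of which the formula $G = \Gamma\circ\min\{\,\cdot+\cdot,b\} - \Gamma\circ\max\{a,\cdot\}$ handles cleanly.
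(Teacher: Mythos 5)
Your proposal is correct and follows essentially the same route as the paper: metrise the weak topology, reduce to sequences, and observe that $L$ acts by integration against the kernel $h(s,x)=\int_{\max\{a,s\}}^{\min\{s+x,b\}}g(t)\,dt$, which is bounded and continuous on the wedge. Your explicit verification of continuity via the Lipschitz antiderivative $\Gamma$ is a nice touch that the paper leaves as ``clear''.
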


\begin{proof}
The weak topology on the space of finite measures on a Polish space is metrisable~\cite{varadarajan58}, so we can check 
continuity of $L$ along sequences. Suppose $\nu_n, n\in \Ns$ converge to $\nu$ in the weak topology on $\mfinite(A_{[a,b]})$. 
Let $g:[a,b]\to \Rs$ be a bounded, continuous function. We have by (\ref{eq:qdual}) that 
\begin{eqnarray}
\int_a^b g(t)dL(\nu_n)(t) &=& \int_{A_{[a,b]}} h(s,x) \nu_n(ds\times dx), \nonumber\\
\mbox{where} && h(s,x) = \int_{\max\{a,s\}}^{\min\{s+x,b\}} g(t)dt, \label{eq:qmeasure}
\end{eqnarray}
where the last integral is defined to be zero if the upper limit of integration is smaller than the lower limit. (In other words, 
the domain of integration should be understood to be empty in this case, rather than treating it as a signed integral with 
limits reversed.)

It is clear that the the function $h:A_{[a,b]} \to \Rs$ is bounded and continuous.
Hence, it follows from the assumed convergence of $\nu_n$ to $\nu$ in the weak topology that the RHS in (\ref{eq:qmeasure}) 
converges to 
\begin{align*} \int_{A_{[a,b]}} h(s,x) \nu(ds\times dx). 
\end{align*}
This completes the proof of the lemma.
\end{proof}

We are now ready to prove the main result. 

\noindent {\bf Proof of Theorem~\ref{thm:qldp}.}
Let $\Phi_n$ denote the Cox process of arrivals into the $n^{\rm th}$ queue, marked with their service times.
Fix $[a,b]\subset \Rs$. By Proposition~\ref{intldp}, the sequence of measures 
$\left. \left( \frac{\Lambda_n}{n}\otimes F \right) \right|_{A_{[a,b]}}$, 
satisfy an LDP on $\mfinite(A_{[a,b]})$ equipped with the weak topology, with good rate function $\eyecox_{[a,b]}$ 
given therein. Hence, by Theorem~\ref{thm:cox_ldp}, the sequence of Cox point measures 
$\left. \frac{\Phi_n}{n} \right|_{A_{[a,b]}}$ also satisfies an LDP on $\mfinite(A_{[a,b]})$ equipped with the weak topology, 
with good rate function $\eyecoxpp_{[a,b]}$ given by
\begin{equation} \label{eq:coxppldp_zero}
\eyecoxpp_{[a,b]}(\boldsymbol{0}) = \inf_{\lambda} \left\{ \eyecox_{[a,b]}(\lambda) + \lambda \left( A_{[a,b]} \right)\right\}, 
\end{equation}
where $\boldsymbol{0}$ denotes the zero measure, whereas, for $\mu \not\equiv \boldsymbol{0}$,
\begin{eqnarray} 
\eyecoxpp_{[a,b]}(\mu) &=& \inf_{\lambda} \Bigl\{ \eyecox_{[a,b]}(\lambda) + \eyep \left( \mu(A_{[a,b]}), \lambda(A_{[a,b]}) 
\right) 
\nonumber \\
&& \quad\quad + \mu(A_{[a,b]}) H\Bigl( \frac{\mu}{\mu(A_{[a,b]})} \Bigm| \frac{\lambda}{\lambda(A_{[a,b]})} \Bigr)
\Bigr\}, \label{eq:coxppldp}
\end{eqnarray}
where $H$ and $\eyep$ are defined in the statements of Theorem~\ref{baxterthm} and Lemma~\ref{poisson_ldp} respectively.

Now, the queue occupancy measures $L_n$ are given by $L_n/n = L(\Phi_n/n)$, where the map $L$ is defined by 
(\ref{eq:qdual}), and is linear and weakly continuous. Hence, by the contraction principle~\cite[Theorem 4.2.1]{dembo98}, 
the sequence of measures $L_n/n$ satisfies an LDP on $\mfinite([a,b])$ equipped with the weak topology, with good 
rate function
\begin{equation} \label{eq:qocc_ldp}
\eyeq_{[a,b]}(\nu) = \inf \left\{ \eyecoxpp_{[a,b]}(\mu): L(\mu)=\nu \right\},
\end{equation}
where the infimum of an empty set is defined to be $+\infty$. Thus, the sequence $L_n$ satisfies Assumption [A2]. 
The measures $L_n$ inherit translation invariance from $\Lambda_n$ via $\Lambda_n \otimes F$ and $\Phi_n$, while 
finiteness of the mean follows easily from that of $\lambda$ (the mean arrival intensity) and of the service time distribution. 
Thus, [A1] is verified. It remains to check [A3].

Observe that, analogous to (\ref{eq:qmeasure}), we have 
\begin{eqnarray*}
L_n([0,1]) &=& (L(\Phi_n))([0,1]) \\
&=& \int_{(s,x)\in A_{[0,1]}} \bigl( \min \{s+x,1\} - \max \{s,0\} \bigr) \Phi_n(ds\times dx) \\
&\leq& \Phi_n(A_{[0,1]}).
\end{eqnarray*}
But, conditional on $\Lambda_n \equiv \boldsymbol{\lambda}$, $\Phi_n([0,1])$ is a Poisson random variable with mean 
$(\boldsymbol{\lambda}\otimes F)(A_{[0,1]})$. Hence, we have for $\theta \geq 0$ that
$$
\E \left[ e^{\theta L_n([0,1])} \right] \leq \E \left[ \exp \Bigl( \bigl( e^{\theta}-1 \bigr) \bigl( \Lambda_n \otimes F \bigr)
\bigl(A_{[0,1]} \bigr) \Bigr) \right].
$$
Moreover, it can be shown by splitting $A_{[0,1]}$ into vertical strips of unit width and invoking Lemma~\ref{cxorder}, as in the 
proof of Lemma~\ref{lem:complement_bd}, that 
$$
(\Lambda_n \otimes F)(A_{[0,1]}) \le_{icx} (1+\E[S])\Lambda_n([0,1]),
$$
where $\E[S]$ denotes the mean service time, and is finite by Assumption [A4]. Hence, we obtain for $\theta \geq 0$ that
$$
\E \left[ e^{\theta L_n([0,1])} \right] \leq \E \left[ \exp \Bigl( \bigl( e^{\theta}-1 \bigr) \bigl( 1+\E[S] \bigr)
\bigl( \Lambda_n([0,1]) \bigr) \Bigr) \right].
$$
By Assumption [A3], there is a neighbourhood of 0 on which 
\begin{align*} \frac{\psi_n(n\eta)}{n} =\frac{1}{n}\log \E \left[e^{\eta \Lambda_n(0,1)}\right]
\end{align*} 
is bounded, uniformly in $n$. Setting $\eta=(e^{\theta}-1)(1+\E[S])$, we obtain uniform boundedness of 
\begin{align*} \frac{1}{n}\log \E\left[e^{\theta L_n([0,1])}\right] 
\end{align*}
for $\theta \geq 0$ sufficiently small, uniformly in $n$. Boundedness is automatic for $\theta<0$ as the random variables 
$L_n([0,1])$ are non-negative. Thus, the sequence of measures $L_n$ satisfy [A3] as well. 
This completes the proof of the theorem.
\hfill $\Box$

Having established the LDP for the queue occupancy measure, we now turn to the empirical measure of the departure process 
from the infinite-server queue, which can be expressed as a function of the marked arrival process, where the marks specify the 
service times. Fix a compact interval $[a,b]\subset \Rs$, and let $D$ denote the function which maps the marked arrival process 
on $A_{[a,b]}$, to the departure process measure on $[a,b]$, as described in (\ref{eq:dep_measure}). We will formally define 
$D$ via the Riesz representation theorem, by specifying, for each $\nu \in \mfinite(A_{[a,b]})$, the integral with respect to 
$D(\nu)$ of arbitrary bounded, continuous functions on $[a,b]$. Let $g \in C_b([a,b])$ be one such function. We define the 
function $h_g$ on $A_{[a,b]}$ by setting 
\begin{equation} \label{eq:hdef}
h_g(s,x) = \begin{cases}
g(s+x), & (s,x) \in {\rm cl}(A_{[a,b]} \backslash A_b), \\
0, & \mbox{ otherwise,}
\end{cases}
\end{equation}
and define the map $\nu \mapsto D(\nu)$ by setting
\begin{equation} \label{eq:dep_riesz}
\int_{[a,b]} gd(D(\nu)) = \int_{A_{[a,b]}} hd\nu, \quad \forall \; g\in C_b([a,b]).
\end{equation}
It is clear from (\ref{eq:dep_measure}) that 
$$
\int_{[a,b]} gd\Psi_n = \int_{A_{[a,b]}} hd\hat\Phi_n, \quad \forall \; g\in C_b([a,b]),
$$
i.e., $\Psi_n = D(\hat \Phi_n)$. We will show that $D(\cdot)$ is continuous in a suitable topology, and use this to establish the 
desired LDP for $(\Psi_n, n\in \Ns)$.

\noindent {\bf Proof of Theorem~\ref{thm:dep_ldp}.} 
We begin by showing that the map $D:\mfinite(A_{[a,b]}) \to \mfinite([a,b])$ defined by (\ref{eq:dep_riesz}) is continuous, 
when $\mfinite(A_{[a,b]})$ is equipped with the weak topology, and $\mfinite([a,b])$ with the tempered topology. We can 
check continuity using sequences, as the weak topology on $\mfinite(A_{[a,b]})$ is metrisable~\cite{varadarajan58}. Consider 
a sequence of finite measures $\nu_n$ on $A_{[a,b]}$, converging weakly to a finite measure $\nu$. Let $g$ be a bounded, 
continuous function on $[a,b]$, vanishing at its end-points, $a$ and $b$. Then, it is clear that the function $h_g$ defined in 
(\ref{eq:hdef}) is bounded and continuous on $A_{[a,b]}$. Therefore, $\int h_g d\nu_n$ converges to $\int h_d d\nu$, where 
the integrals are over $A_{[a,b]}$. Hence, by (\ref{eq:dep_riesz}), $\int gdD(\nu_n)$ converges to $\int gdD(\nu)$. It follows 
that $\nu_n$ converges to $\nu$ in the tempered topology.

It was shown in the proof of Theorem~\ref{thm:qldp} that $\hat\Phi_n/n$ (which was denoted $\Phi_n/n$ there!) satisfy 
the LDP on $\mfinite(A_{[a,b]})$ equipped with the weak topology. Since the map $D(\cdot)$ is continuous, it follows by 
the contraction principle~\cite[Theorem 4.2.1]{dembo98} that $\Psi_n/n$ satisfy the LDP on $\mfinite([a,b])$ equipped with 
the tempered topology, with a good rate function $K_{[a,b]}(\cdot)$, which can be expressed as the solution of a minimisation 
problem. 

It remains to strengthen this LDP to the weak topology on $\mfinite([a,b])$. We do this by showing that the sequence of 
random variables $\Psi_n$, $n\in \Ns$, is exponentially tight in the weak topology. In order to show this, fix $\alpha>0$, 
arbitrarily large. We need to find a weakly compact subset $K$ of $\mfinite([a,b])$ such that 
$$
\limsup_{n\to \infty} \frac{1}{n} \log \Pb \Bigl( \frac{\Psi_n}{n} \in K^c \Bigr) \leq -\alpha,
$$
where $K^c$ denotes the complement of $K$. Fix $\gamma>0$ sufficiently large, and take 
$$
K(\gamma)= \{ \nu \in \mfinite([a,b]): \nu([a,b]) \leq \gamma \}.
$$ 
Then $K(\gamma)$ is compact in the weak topology, as noted in the proof of Proposition~\ref{compact_set_measures}. 
Moreover, 
$$
\Psi_n([a,b]) = \hat\Phi_n( {\rm cl} (A_{[a,b]} \backslash A_b) ) \leq \hat\Phi_n(A_{[a,b]}), 
$$
and so,
$$
\limsup_{n\to \infty} \frac{1}{n} \log \Pb \Bigl( \frac{\Psi_n}{n} \in K(\gamma)^c \Bigr) \leq 
\limsup_{n\to \infty} \frac{1}{n} \log \Pb \Bigl( \frac{\hat\Phi_n}{n} \bigl( A_{[a,b]} \bigr) > \gamma \Bigr).
$$
By the goodness of the rate function governing the LDP of $\hat\Phi_n/n$, the last term tends to $-\infty$ as $\gamma$ 
tends to infinity. Hence, we can choose $\gamma$ large enough to make it smaller than $-\alpha$, as required.

Since $(\Psi_n/n, n\in \Ns)$ satisfy the LDP on $\mfinite([a,b])$ equipped with the tempered topology, and are exponentially 
tight in the weak topology, it follows by~\cite[Corollary 4.2.6]{dembo98} that the LDP also holds in the weak topology, and 
by~\cite[Lemma 1.2.18]{dembo98} that the rate function is good.

It remains to show that $\Psi_n/n$ satisfy Assumptions [A1] and [A3]. The proof is very similar to the corresponding part of the 
proof of Theorem~\ref{thm:qldp}. Translation invariance is inherited from $\hat\Phi_n/n$, and finiteness of the mean intensity 
is also easy to prove using the same property for $\Phi_n/n$ and $F$. To prove [A3], we use the fact that $\Psi_n([a,b])$ is 
dominated by $\hat\Phi_n(A_{[a,b]})$. We omit the details, which are identical to the proof of Theorem~\ref{thm:qldp}.
\hfill $\Box$

{\bf Acknowledgements} The second author learnt of the biological problem that motivated this work at a workshop at the 
Mathematical Biosciences Institute, Ohio, in September 2011. He would like to thank Tom Kurtz and the MBI for the invitation.

\end{document}